\theoremstyle{dgthm}
\newtheorem{thm}{Theorem}
\newtheorem{cor}{Corollary}
\newtheorem{prop}{Proposition}
\newtheorem{lem}{Lemma}
\theoremstyle{dgdef}
\newtheorem{defn}{Definition}
\newtheorem{example}{Example}
\newtheorem{rem}{Remark}
\newcommand{\vast}{\bBigg@{3}}
\newcommand{\Vast}{\bBigg@{4}}
\newcommand {\mbbX}{[\mathbb{X}\colon X_1 \rightrightarrows X_0]}
\newcommand {\mbbY}{[\mathbb{Y}\colon Y_1 \rightrightarrows Y_0]}
\newcommand{\AtP}{{\mathcal A}{\mathfrak {t}}(P)}
\newcommand{\AdP}{{\mathcal A}{\mathfrak {d}}(P)}
\newcommand{\ActTM}{{\mathcal A\mathfrak {ct}}(\pi^* TM)}
\begin{document}

\bibliographystyle{plainnat}
\title{Connections  on a principal Lie groupoid bundle and representations upto homotopy}
\runningtitle{Connections on a principal Lie groupoid bundle}

\author[1]{Saikat Chatterjee}
\author*[2]{Naga Arjun S J}
\runningauthor{Saikat Chatterjee, Naga Arjun S J}
\affil[1]{\protect\raggedright School of Mathematics,
Indian Institute of Science Education and Research, Thiruvananthapuram, Bonacaud Road, Kerala-695551, India, email:saikat.chat01@gmail.com}
\affil[2]{\protect\raggedright 
School of Mathematics,
Indian Institute of Science Education and Research, Thiruvananthapuram, Bonacaud Road, Kerala-695551, India, email:arjun21@iisertvm.ac.in}



\abstract{
A Lie groupoid principal  $\mbbX$ bundle is a surjective submersion $\pi\colon P\to M$ with an action of $\mathbb{X}$ on $P$ with certain additional conditions. This paper offers a suitable definition for the notion of a connection on such bundles. Although every Lie groupoid  $\mathbb{X}$  has its associated Lie algebroid $A:=1^*\ker ds\to X_0$, it does not admit a natural action on its Lie algebroid. There is no natural action of $\mathbb{X}$ on $TP$ either. Choosing a connection $\mathbb{H}\subset TX_1$ on the Lie groupoid $\mathbb{X},$ and considering its induced action up to homotopy of  $\mathbb{X}$ on graded vector bundle $TX_0\oplus A,$ we prove the existence of a short exact sequence of diffeological groupoids over the discrete category $M$ (with appropriate graded vector space structures on the fibres) for the $\mbbX$ bundle  $\pi\colon P\to M.$ We introduce a notion of connection on $\mbbX$ bundle  $\pi\colon P\to M,$ and show that such a connection $\omega$ splits the sequence. Finally, we show that a connection pair $(\omega, \mathbb{H})$ on $\mbbX$ bundle  $\pi\colon P\to M$ is isomorphic to any other connection pair.}

\keywords{Lie groupoid principal bundles; Connections; Representations up to homotopy}
  \classification[MSC Classification]{53C05, 22A22, 55R99, 18F15}

\maketitle

\section{Introduction}\label{Section:Introduction}
In the last few decades or so, one of the active fields of research has been the various structures on Lie groupoids and their associated geometry. One of the reasons behind this surge of interest ought to be how Lie groupoids naturally appear in multiple areas of mathematics and mathematical physics. For example, in foliation theory (as \textit{foliation groupoid} or \textit{monodromy groupoid} for a foliated manifold), theory of orbifolds (as \textit{proper, \'etale Lie groupoids}), differentiable stacks (as \textit{Mortita equivalence classes of Lie groupoids}) and theory of gerbes (as \textit{Morita equivalence classes of Lie groupoid extensions}), Higher gauge theories (as \textit{Lie $2$-group  bundles over Lie groupoids}), just to name a few. This makes the geometry of Lie groupoids and their related structures a vital direction of inquiry. 

A Lie groupoid can be viewed as a generalization of either a smooth manifold or a Lie group. Consequently, typical objects of  study are either a bundle over a Lie groupoid \cite{MR2270285, MR4721218, MR3150770, MR4403617, chatterjee2023parallel, MR4592876, 
MR4403617} or a principal Lie groupoid bundle over a manifold \cite{MR755163, MR2157566, MR1871544}, respectively.  The current paper deals with the latter.  Specifically, our paper proposes a connection structure on such a Lie groupoid principal bundle. 
Before discussing the content of this paper, we give a general overview of work already done in related areas.

Perhaps, historically, credit goes to Haefliger for introducing the notion of a Lie groupoid bundle over a smooth manifold \cite {MR755163}. Subsequently, through the works of Moerdijk, Behrend, Xu, and others  \cite{MR2817778, MR2778793, moerdijk2002introduction, MR2270285, MR2119241, MR2017529}, Lie groupoids and the Lie groupoid principal bundles become the central focus for studying differentiable stacks and gerbes. Let $\mbbX$ be a Lie groupoid. Then a Lie groupoid bundle (or a Lie groupoid torsor, as it has been called in some papers) is a smooth surjective submersion $\pi\colon P\to M$ with structure group replaced by a Lie groupoid, which means a ``Lie groupoid action''  on $P$ with some additional compatibility conditions. The category $\mathcal B{\mbbX}$ of such $\mathbb{X}$ bundles define a differentiable stack, and conversely given a differentiable stack one can find a Lie groupoid $\mbbX$ such that the stack $\mathcal{B}{\mathbb{X}}$ is isomorphic to the given stack \cite{MR2817778, MR2778793}. In other words, the Lie groupoid bundles classify differentiable stacks.
A slightly more general notion of a  $(\mathbb{X}-\mathbb{Y})$ bibundle $\pi\colon P\to M$, where $P\to M$ is a (right) $\mbbX$-bundle with   an equivariant (left) action of $\mbbY$ on $P,$ has also been studied(\cite{MR2778793,MR2817778}).
 Moreover   $(\mathbb{X}-\mathbb{Y})$ bibundle classify the maps of  stacks $\mathcal{B}(\mathbb{X})\to\mathcal{B} (\mathbb{Y})$ \cite{MR2778793}. 
 In \cite{MR2017529}, authors classify regular Lie groupoids using Girad's theories of gerbes and Non-abelian cohomology.
 Behrend and Xu \cite{MR2817778} classify differentiable stacks by Morita equivalent classes of Lie groupoids. Several papers have appeared investigating along this line.  Here, we mention a few. One may consult   \cite{MR2223406, moerdijk2002introduction} for a broad review of groupoids and stacks.  In \cite{MR4124773}, the authors have explored the correspondence between a Morita equivalence class of Lie groupoid extensions and a gerbe over a stack. In \cite{MR4170276}, equivariant cohomology for a differentiable  stack equipped with an action of a Lie group $G$ (namely, a $G$-stack)
has been worked out. In the book \cite{moerdijk2003introduction}, the ``unit bundle'' associated with a Lie groupoid provides the set-up for the proof of an extension result from a morphism between  Lie algebroids to their corresponding Lie groupoids. In \cite{MR1871544}, the author has constructed a Lie groupoid principal bundle out of appropriate cocycle conditions. 
For a Lie crossed module $(G, H),$ in \cite{ginot2013introduction}, a principal $(G, H)$-bundle over a differentiable stack has been studied. Though it may not be directly relevant to this paper, another recurring object of study involving Lie groupoids is the bundle over a Lie groupoid. For example, \cite{MR2270285, MR4721218, MR3150770} studied principal $G$-bundles over Lie groupoids and their connection structures, where $G$ is a Lie group. Whereas articles such as \cite{MR3480061, MR3566125, MR4403617, chatterjee2023parallel, MR3894086} discuss Lie $2$-group (or, equivalently, Lie crossed module) bundles over Lie groupoids. Some papers that deal with the bibundles of Lie crossed modules and their relation with non-abelian gerbes are \cite{MR2117631, MR3004105}.

While Lie groupoid principal bundles have recently received considerable attention, the connection structures on such bundles have remained virtually uncharted territory. Though, as one may perceive, a thorough study of such connections has many scopes; for example, it paves the way for introducing such a structure on differentiable stacks or allows the study of  Chern-Weil theory for such bundles. This apparent lack of interest may be attributed to the ambivalent relationship between a Lie groupoid $\mbbX$ and its Lie algebroid $1^*\ker ds:=A\to X_0$. Specifically, a Lie groupoid has no natural action on its Lie algebroid. For a $\mbbX$-bundle $\pi\colon P\to M,$ neither there is any induced action of $\mbbX$ on $TP;$ thus rendering it impossible to conceive a connection as an ``$\mbbX$ equivariant'' ``Lie algebroid valued'' differential form on $P.$ To circumvent this problem, we proceed as follows. 

We choose an (Ehresmann)  connection $\mathbb{H}$ on the Lie groupoid $\mbbX;$ that is a subbundle $\mathbb{H}\subset TX_1$ such that $\mathbb{H}\oplus \ker ds=TX_1$ and $\mathbb{H}_{1_x}=(d1)_x(T_xX_0)$, for every $x \in X_0$  \cite{cran}. It should be mentioned here that the definition of a connection on a Lie groupoid available in the literature is not unique, and various authors have considered various closely related but non-equivalent definitions depending on their requirements. For example, \cite{BEHREND2005583} considers a sub Lie groupoid of the tangent groupoid $TX_1\rightrightarrows TX_0$ to be a connection on the Lie groupoid (often called a \textit{Cartan connection on a Lie groupoid}), whereas  \cite{MR4592876} defines a connection with an additional condition of functoriality on an Ehresmann connection. More recently, the paper  \cite{doi:10.1142/S0219199721500929} introduced the multiplicative connection on $\mbbX$ as a vector bundle connection on the tangent bundle $TX_1\to X_1$ with certain additional compatibility conditions. Some other papers which deal with Lie groupoid connections are  \cite{MR2238946,  MR3886162}. A choice of connection $\mathbb{H}\subset TX_1$ on $\mbbX$ allows us to define a quasi action $\lambda$
of the Lie groupoid on the graded bundle $A\oplus TX_0.$ In fact, this quasi action is an action up to a homotopy, which roughly means  $\lambda_g\lambda_h-\lambda_{gh}$ does not vanish, instead measured by a homotopy relation (see \cref{defn:quasi action of X on both A and TX_0}).   Abad and Crainic have developed the necessary theory for representation up to a homotopy in \cite{cran}. We call this action (up to a homotopy) the ``adjoint action''. In this paper, we see that  the same choice of connection $\mathbb{H}$ defines a quasi action of $\mbbX$ on the graded vector bundle $TP \oplus a^*TX_0$ (see \cref{eqn:quasi action on TP}
in \cref{Subsection:Groupoid AtP}), and quite remarkably, respective homotopy terms involved in the quasi actions on $A\oplus a^*TX_0$ and $TP \oplus a^*TX_0$ are mutually well behaved to allow us to define a connection on the $\mathbb{X}$-bundle $P\to M$ (\cref{defn:connection as 1-form}). On the other hand, with the actions (up to homotopy) of $\mbbX$ on $A \oplus a^*TX_0$ and $TP \oplus a^*TX_0$, the natural step would be to construct the Adjoint bundle and the Atiyah bundle over $M$. However, due to the obstruction of the homotopy relation, we obtain a pair of groupoids $\AdP$  and $\AtP$ instead, corresponding to the actions upto homotopy on $A \oplus a^*TX_0$ and $TP \oplus a^*TX_0$ respectively, over the discrete category $M$ (\cref{Subsection:Groupoid AtP}, \cref{Subsection:Groupoid AdP}). 
The construction of these groupoids explicitly depends on the homotopy relations for the quasi-actions on $A$ and $TP.$ 
In general, $\AdP, \AtP$  are not Lie groupoids but are diffeological groupoids. We refer to \cite{iglesias2013diffeology} for a general discussion on diffeology. Among other works on diffeology, which we have relied on, particularly for diffeological vector bundles, are \cite{MR3467758, PERVOVA2016269}. Intriguingly, the fundamental vector field map turns out to be well-behaved under the ``categorization'' of the adjoint bundle and the Atiyah bundle, and 
induce a well defined diffeologically smooth functor $\bar d\delta\colon \AdP\to \AtP$
(\cref{prop:functor bardelta}) such that everything fits into a sequence of diffeological groupoids. This sequence of functors is short and exact in an ``appropriate sense'' (\cref{Thm:Atiyah sequence for Lie groupoid bundle existence}). Borrowing the classical terminology, we call this sequence the \textit{Atiyah sequence} of $\mathbb{X}$-bundle $\pi\colon P\to M.$  Moreover, we show that a connection on the $\mathbb{X}$ bundle defines a splitting of the Atiyah sequence. The converse holds for a Cartan connection $\mathbb{H} \subset TX_1$ (\cref{subsection:connections and quasi connections}). One should note that the entire framework we develop here depends on the choice of the connection $\mathbb{H}\subset TX_1$ on the Lie groupoid $\mbbX.$ Consider a connection pair $(\omega, \mathbb{H}),$ where $\omega$ is a connection on the $\mathbb{X}$-bundle  $P\to M$ for the choice of a connection $\mathbb{H}\subset TX_1$ on the Lie groupoid $\mbbX.$
To understand how different choices of connections on the Lie groupoid $\mbbX$ affect our framework, we define a category $\mathbb{CONNECTION}\bigl(\mathbb{X}, P\to M\bigr)$ whose objects are the connection pairs $(\omega, \mathbb{H})$
(\cref {eqn:Category of connections}). We show that two such connection pairs are isomorphic in the category $\mathbb{CONNECTION}\bigl(\mathbb{X}, P\to M\bigr)$ (\cref{prop:inducing a connection form from a theta}).

Among only very few works which address connection structures on a Lie groupoid  $\mbbX$ principal bundle $\pi\colon P\to M$ is the book by Moerdijk and Mr\u cun \cite{moerdijk2003introduction}. There, the authors have introduced ``$\mathcal{F}$-partial connection'' for a foliation $\mathcal{F}$ of $M$ (see \cref{rem:Casual Moerdijk connection} for an observation relating our framework with the partial connection in \cite{moerdijk2003introduction}). Very recently, with a physical application in mind,  specifically to study ``curved Yang-Mills-Higgs theories'',   a notion of a connection as a dg-Lie groupoid functor constructed from a \' Cech cocyle description of a principal Lie groupoid bundle has been introduced in \cite{fischer2024adjustedconnectionsidifferential} (one may also find an infinitesimal version of this approach in \cite{MR4202191}) \footnote{We thank Simon-Raphael-Fischer for referring these papers to us.}. Our framework of connections in this paper differs, both in motivation and in approach, from those of the others.  
The significance of our theory of connections on the Lie groupoid principal bundle lies in its very natural adaptability with the representations up to the homotopy of a Lie groupoid, leading to a sequence of diffeological groupoids analogous to the Atiyah sequence of a classical principal bundle. Quite interestingly, the connection defined here can also be characterised as a morphism of fibred categories.  We believe that the framework of connections on a Lie groupoid principal bundle developed here is new and will open up many future directions, particularly along the lines of differentiable stacks and foliation theory. In this paper,  we provide several examples in each section to illustrate the naturality of our framework. The framework of connection on the $\mbbX$-bundle $P\to M$ in this paper seems to have interesting properties concerning foliations on the base manifold $M$, and it also admits an extension along a Lie groupoid extension. 
Considering this paper's length, we thought it would be prudent to cover those topics in detail in a separate paper.  A \textit{stacky groupoid} is a groupoid whose morphism space is a differentiable stack rather than a smooth manifold \cite{MR2439561}. Bursztyn,  Noseda, and Zhu have recently characterized the principal actions of stacky groupoids \cite{MR4139032}. It would be interesting to see whether the framework developed in our paper can be extended to the more general setup of stacky groupoids.  In fact, in \cref{rem: relation between action of stacky groupoid and bundle upto homotopy}, we see that our description of quotient is closely related to the stacky quotient introduced in \cite{MR4139032}.

\subsection*{A brief outline of the paper}

The first three sections cover the background material for the rest of the paper. The results and construction in these sections are already available in the literature.  

In \cref{section:Principal Lie Groupoid Bundle over manifolds}, we collect the definitions and basic examples of Lie groupoids and Lie groupoid principal bundles.

The \cref{section:connection of a Lie groupoid} is crucial for the rest of the paper. We review the definition and examples of Lie algebroids, the notion of a connection on a Lie groupoid, and the theory of representations up to the homotopy of a Lie groupoid. In particular, we focused on the representation (up to homotopy) of a Lie groupoid on its Lie algebroid. The entire section is based on \cite{cran}. Although the results are known, given the importance of this section for this paper and the ease of reading, we adopt an elaborate treatment, occasionally providing proofs or results omitted in the original paper. 

In \cref{section:Diffeological spaces}, we recall the basics of diffeology and the notion of diffeological vector spaces.

In \cref{section:Atiyah sequence}, we show that by fixing an Ehresmann connection on the Lie groupoid $\mbbX$, one can induce actions upto homotopy of $\mbbX$ on $P \times_{X_0} A \oplus a^*TX_0$ and $TP \oplus a^*TX_0$ for a principal $\mathbb{X}$-bundle $P\to M$. We construct diffeological groupoids $\AdP$ and $\AtP$ associated with these two actions. The natural action of $\mathbb{X}$ on $TM$ gives the Lie groupoid $\ActTM.$ All three groupoids can be viewed as categories over the discrete category of $M$ with connected components of each fibre forming a graded vector space  (\cref{subsubsection:adjonit bundle AtP over M}, \cref{subsubsection:Atiyah bundle over M}, \cref{subsubsection:The bundle ActTM over M}). One of the central results in this section is that the fundamental vector field map $d\delta:P \times_{X_0} A \rightarrow TP$ respects the homotopy relation (\cref{prop:fundamental vector field map preserve error of quasi action}). Moreover $d\delta$
induces a smooth functor $\bar d\delta\colon \AdP\to \AtP.$  Whereas the projection $\pi\colon P\to M$ induces a smooth functor $\bar d\pi\colon \AtP\to \ActTM,$ 
resulting in the final theorem of this section,  that the functors 
$\bar d\delta\colon \AdP\to \AtP$ and $\bar d\pi\colon \AtP\to \ActTM$ form a sequence of diffeological groupoids with appropriate linearity properties (\cref{Thm:Atiyah sequence for Lie groupoid bundle existence}). We call it the Atiyah sequence of $\mathbb{X}$-bundle $P\to M$ for the choice of connection $\mathbb{H}\subset TX_1$ on $\mbbX.$

In \cref{section:connection as 1-form},  for a choice of an 
Ehresmann connection $\mathbb{H}$, we introduce a notion of connection form $\omega$ (Associated Lie algebroid valued $1$-form) on a principal $\mathbb{X}$ bundle (\cref{defn:connection as 1-form}), and provide an equivalent description of it (\cref{thm:chartacterization of connection form in terms of subbundle}) in terms of a subbundle $\mathcal{H}\subset TP$.  We also characterize this connection as a morphism of fibred categories (\cref{prop: natural transformation between F and G coming from ddelta and omega}). We show that connection forms  on $\mbbX$-bundle $P\to M$ obtained for two different choices of the Lie groupoid connections on $\mbbX$ are isomorphic to each other in the category $\mathbb{CONNECTION}\bigl(\mathbb{X}, P\to M\bigr)$ whose objects are the connection pairs $(\omega, \mathbb{H})$
(\cref {eqn:Category of connections}). Finally, we relate the construction of the Atiyah sequence in \cref{section:Atiyah sequence} by showing that a connection $1$-form splits the Atiyah sequence. If the Lie groupoid connection is Cartan, then the converse holds (\cref{prop:Connection and quasi connection in case of Cartan connection}).

We include an appendix for some lengthy calculations required for certain results. 

\section{Principal Lie groupoid bundle over a manifold}\label{section:Principal Lie Groupoid Bundle over manifolds}
We start by recalling the basic definitions of Lie groupoid and principal Lie groupoid bundles. 
Let $\mathfrak Man$ be the category of smooth, Hausdorff, second countable manifolds. 
\begin{defn}[Lie groupoid]\label{defn:Lie Groupoid}
  A \textit{Lie groupoid} is a groupoid internal to the category of smooth manifolds $\mathfrak Man$, along with the source and target maps being submersions.
  \end{defn} 
  Explicitly,  a Lie groupoid is a groupoid with both the object and morphism sets to be smooth manifolds along with smooth structure maps. Moreover, the source (hence, the target) map is required to be a submersion. We denote a Lie groupoid by $\mbbX$ or simply by $\mathbb{X}$. The source and target maps will typically be denoted by $s$ and $t.$ The composition map will be denoted as $m\colon X_1\times_{s, t}X_1\to X_1$ when we need to specify it explicitly; otherwise we write $m(g, h)$ as $gh.$ Associated to a Lie groupoid $\mbbX$ we have its \textit{tangent grouopid} $[T\mathbb{X}:TX_1\rightrightarrows TX_0]$ with composition maps are differentials of those of $\mbbX.$ In particular $dm\bigl((g, \beta), (h, \alpha)\bigr)$ will be denoted as $(gh, \beta\alpha):$
  \begin{equation}\label{notation:differential of composition map}
dm\bigl((g, \beta), (h, \alpha)\bigr):=(gh, \beta\alpha).
    \end{equation}
  
\begin{rem}\label{Rem:manifold type}
  For our purpose, we consider our  smooth manifolds in 
  ${\mathfrak Man}$ to be Hausdorff and second countable. However, the choice is not completely unambiguous. For example, in foliation theory, as it turns out, the holonomy groupoids are not Hausdorff; thus, Hausdorff condition is dropped.  Even the object manifold is occasionally assumed to be Hausdorff, whereas the morphism manifold is not \cite{moerdijk2003introduction}.
\end{rem}
\begin{rem}\label{Rem:Necessity of submersion}
    Observe that the category of smooth manifolds is not closed under fibre product or pull-back. However, the submersion condition on the source and target maps of a Lie groupoid $[\mathbb{X}\colon X_1 \rightrightarrows X_0]$ ensures that the set $X_1 \times_{X_0} X_1$ is indeed a smooth manifold.
\end{rem}

  It is often interesting to investigate the Lie groupoids enjoying some additional properties. For instance, if the source, target maps of a Lie groupoid $\mbbX$ are \'etale, then the  Lie Groupoid $[X_1 \rightrightarrows X_0]$ is called \textit{\'etale}.  Whereas, if the  map $(s,t)\colon X_1 \rightarrow X_0 \times X_0$
is proper, the Lie groupoid is called \textit{proper}. The proper, \'etale Lie groupoids are quite well studied because of their correspondence with orbifolds and Mumford stacks(\cite{MR2778793}).

In the following passage, we provide some common examples of Lie groupoids.
\begin{example}[A manifold as Lie groupoid]\label{example:manifold as groupoid}
    A smooth manifold $M$ defines a Lie groupoid $[M \rightrightarrows M]$ with both source and target maps are identity maps. This Lie groupoid is called the \textit{discrete groupoid} associated with the manifold $M$.
\end{example} 
\begin{example}[Pair groupoid]\label{example:pair groupoid}
    For every manifold $M$, the product manifold $M \times M$ is a Lie Groupoid over $M$. So, source and target maps are the second and first projections, respectively, and the composition map is given as: 
    $(m'',m') \circ (m',m)=(m'',m).$
\end{example}

 \begin{example}[Cover or \u Cech groupoid]\label{example:cover groupoid}
    Let $\mathcal{U}=\bigl\{U_i\bigr\}_{i \in I} $ be an open cover of a manifold $M$. Let $\bigsqcup \limits_{i \in I} U_i$ be the disjoint union of the cover, and  $\bigsqcup\limits_{i,j \in I} U_i \cap U_j$  the disjoint union of the double intersections of the same. Evidently we have a Lie groupoid $\mathcal{U}^{\rm Grpd}:=\Bigg[\bigsqcup  U_i\rightrightarrows \bigsqcup U_i \cap U_j\Bigg]$ with  the respective source-target maps $s(i,j,x)=(i,x)$, $t(i,j,x)=(j,x)$ with the composition $(j, k, x)\circ (i,j,x)=(i, k, x).$  This Lie Groupoid $\mathcal{U}^{\rm Grpd}$ is called the \textit{cover groupoid} or the \textit{Cech groupoid} of the cover $\mathcal{U}.$
\end{example}
 \begin{example}[Lie group as a Lie groupoid]\label{example:Lie group as lie groupoid}
    A Lie group $G$ itself can be viewed as a Lie Groupoid $[G\rightrightarrows *]$ with the object manifold being the singleton $0-$dimensional manifold with composition given by the group multiplication.
  \end{example}
 
\begin{example}[Action groupoid of a Lie group]\label{example:action groupoid}
      Given a smooth right action of a Lie group $G$  on a manifold $P$, we define a groupoid, namely the  \textit{action groupoid}, $[P\times G\rightrightarrows P].$ The structure maps are respectively given by $s(p, g)=p$, $t(p, g)=pg$ and $(pg, h)\circ (p, g))=(p,gh).$
 \end{example}
\begin{example}[Vector bundle as a Lie groupoid]\label{example:vector bundle as lie groupoid}
Let $p\colon E \rightarrow M$ be a smooth vector bundle. Consider the Lie groupoid $[E\rightrightarrows M]$ with source and target maps $p$ and the composition as the fibrewise addition of vectors. 
\end{example} 
\begin{example}[Atiyah Lie groupoid]\label{example:Atiyah groupoid}
Let $\pi\colon P \rightarrow M$ be a smooth principal $G$ bundle. Consider the pair groupoid $[P\times P\rightrightarrows P].$ The diagonal action $(p, p')g=(pg, p'g)$ of $G$ on $P\times P$ and the action of $G$ on $P$ reduce the pair Lie groupoid to a Lie groupoid $[\frac{P\times P}{G}\rightrightarrows M].$ The Lie groupoid is called \textit{Atiyah groupoid} or \textit{Gauge groupoid} of the principal $G$-bundle $\pi\colon P\to M$.
\end{example} 

As the introduction mentions, a Lie groupoid is a generalization of both a manifold and a Lie group. 
 In its later avatar, natural directions of inquiry are towards its action on a smooth manifold, its principal bundle structure \cite{MR2778793}, and the representation theory  \cite{cran}. This paper will primarily be interested in this aspect of the Lie group. The Lie groupoid principal bundles are particularly interesting for their association with differentiable stacks \cite{MR2778793}.

 \begin{defn}[Right action of a Groupoid]\label{defn:right action of lie groupoid}
 A \textit{Right action} of a Lie groupoid $\mbbX$ on a manifold  $P$ consisted of a pair of smooth maps, the \textit{anchor map} $a\colon P \rightarrow X_0$ , and the \textit{action map} $\mu\colon P \times_{a,X_0,t} X_1 \rightarrow P$ satisfying the following conditions:
 \begin{enumerate}
     \item $a\left(\mu(p,g)\right)=s(g)$ for all $(p,g) \in P \times_{X_0} X_1$,
     \item $\mu\left(\mu(p,g),h\right)=\mu(p,gh)$ for every $(p,g,h) \in \left((P \times_{X_0} X_1) \times_{X_0} X_1 \right) $,
     \item $\mu\left(p,1_{a(p)}\right)=p$ for every $(p,1_{a(p)}) \in P \times_{X_0} X_1$.
 \end{enumerate}
 An action map $\mu\colon P \times_{a, X_0, t} X_1 \rightarrow P$ satisfying all the conditions except (2) and (3) is called a \textit{quasi action}. When there is no scope of confusion, we write $\mu(p, g)$ as $pg,$ and similarly for the differential $d\mu_{(p,g)}\bigl((p,u),(g,\alpha)\bigr)$ will be denoted in short as $(pg,u.\alpha).$
 \end{defn}
 It should be kept in mind that a given  $g \in X_1$ can only act on the elements of the fibre $a^{-1}(t(g))$.  

 \begin{rem}\label{rem:equivalance of right and left action}
Likewise, one defines the left action of the Lie groupoid on a smooth manifold as a smooth map  $X_1 \times_{s, X_0, a} P\rightarrow P$.  
 It is obvious that a right action can be turned into a left action, and vice versa, by inverting the elements of $X_1.$
 \end{rem}

\begin{rem}\label{rem:Action groupoid of a lie groupoid action}
    Imitating the construction in \cref{example:action groupoid}, one can produce a Lie groupoid $[P \times_{X_0} X_1 \rightrightarrows P]$ with obvious structure maps     
    for a Lie groupoid $\mbbX$ acting on a manifold $P$.
\end{rem}

\begin{example}\label{example:group acting on a manifold}
Suppose a Lie group $G$ acts on a smooth manifold $P.$ Then this action can be naturally interpreted as the action of the Lie groupoid $[G\rightrightarrows *]$ (see \cref{example:Lie group as lie groupoid}) on $P$  with anchor map the constant map $P\to *$ and the action map $(p, g)\mapsto pg.$

 \end{example}

\begin{example}\label{example:action groupoid acting on a manifold}
Suppose a Lie group $G$ acts on a smooth manifold $P.$ Then we have an action of the action groupoid $[P\times G\rightrightarrows P]$ (see \cref{example:action groupoid}) on $P$ with anchor map being the identity $p\mapsto p$ and the action map $p' (p, g)=p' g^{-1}.$

 \end{example}

 \begin{example}\label{example:groupoid acting on itself}
     A Lie groupoid  $[X_1 \rightrightarrows X_0]$ acts on $X_1$ with source map $s$ as the anchor map and the action map $(g', g)\mapsto g'\circ g$ given by the composition in Lie groupoid.
 \end{example}

\begin{example}\label{example:Atiyah groupoid action}
The Atiyah Lie groupoid $[\frac{P\times P}{G}\rightrightarrows M]$ associated with the $G$-bundle $\pi\colon P\to M$ in \cref{example:Atiyah groupoid} acts on $P$ with respect to the anchor map $\pi\colon P\to M$ (see Example (2.12) in \cite{MR4592876}).

\end{example} 

\begin{example}\label{example:Action of groupoid on its object space}
    Suppose $\mbbX$ is a Lie groupoid; then there is a natural action of $\mbbX$ on $X_0$ with the identity map taken as the anchor map.  Action map $\mu:X_0 \times_{{\rm Id},X_0,t} X_1 \rightarrow X_1$  is given by $\mu(x,g)=s(g).$
\end{example}
\begin{defn}[Principal Lie groupoid bundle]\label{defn:torsor}
    Let $[\mathbb{X}:X_1 \rightrightarrows X_0]$ be a Lie groupoid.  A \textit{principal $\mathbb{X}$-bundle} over the manifold $M$ is a smooth manifold $P$ with a surjective submersion $\pi\colon P \rightarrow M$ and  an action of the Lie groupoid $\bigl(a\colon P \rightarrow X_0, \, \mu\colon P \times_{a,X_0,t} X_1 \rightarrow P\bigr)$ on the manifold, satisfying the following conditions
    \begin{enumerate}
        \item $\pi(\mu(p,g))=\pi(p)$ for every $p \in P, \, g \in X_1$ such that $a(p)=t(g),$\\
        \item the map $(\text{pr}_1,\mu):P \times_{a,X_0,t} X_1 \longrightarrow P \times_{\pi,M,\pi} P, (p,g)\mapsto (p,pg)$ is a diffeomorphism.\\
    \end{enumerate}
    As a Diagrammatic illustration, we represent a principal Lie groupoid bundle by,
    \[
    \begin{tikzcd}
        & P \arrow[dr, "a"]  \arrow[dl, "\pi"']&  X_1 \arrow[d, shift left] \arrow[d, shift right]\\
        M & & X_0.\\
    \end{tikzcd}
    \]
\end{defn}
\begin{defn}[Map between two principal $\mathbb{X}$-bundles]
    Let $\pi:P \rightarrow M$ and $\pi^{'}:P' \rightarrow M'$ be a pair of principal $\mbbX$-bundles with respective anchors $a_P:P \rightarrow X_0$ and $a_{P'}:P' \rightarrow X_0$.  Then a \textit{morphism between principal Lie groupoid bundles} consists of smooth maps $f:P \rightarrow P'$ and $\Bar{f}:M \rightarrow M'$ such that ${a_{P'}}\circ f=a_{P}$ and  $f(pg)=f(p).g$, for every $(p,g)$ with $a_P(p)=t(g)$.  Moreover, the following diagram commutes:
    \[
    \begin{tikzcd}
        P \arrow[r,"f"] \arrow[d, "\pi"] & P' \arrow[d,"\pi^{'}"'] \\
        M \arrow[r, "\Bar{f}"] & M' \\
    \end{tikzcd}
    \]
   \end{defn}
   Conventionally, $\Bar{f}=\text{Id}_M$, when the base space of two principal bundles $M$ and $M'$ coincide.
The category of principal $\mbbX$-bundles will be denoted as ${\mathcal B}\mbbX.$

\begin{example}\label{example:ordinary bundle}
Let $P\to M$ be a principal $G$-bundle. Then 
\[
    \begin{tikzcd}
        & P \arrow[dr]  \arrow[dl, "\pi"']&  G \arrow[d, shift left] \arrow[d, shift right]\\
        M & & *\\
    \end{tikzcd}
    \]

defines a $[G\rightrightarrows *]$-bundle $P\to M$ over $M$ for the action in \cref{example:group acting on a manifold}.
\end{example}

\begin{example}\label{example:ordinary bundle with action groupoid}
Let $P\to M$ be a principal $G$-bundle. Then 
\[
    \begin{tikzcd}
        & P \arrow[dr, "{\rm Id}"]  \arrow[dl, "\pi"']&  P\times G \arrow[d, shift left] \arrow[d, shift right]\\
        M & & P\\
    \end{tikzcd}
    \]

defines a principal $[P\times G\rightrightarrows P]$-bundle $P\to M$ over $M$ with the action map 
defined as $\mu:P \times_{P} (P \times G) \rightarrow P,\mu\bigl(pg, (p, g) \bigr)=pg.g^{-1}=p.$ 
\end{example}

\begin{example}[Unit bundle]\label{example:unit bundle}
    The Lie groupoid itself provides an important class of examples for Lie groupoid bundles. For a Lie Groupoid $\mbbX$, the target map $t\colon X_1\to X_0$ is a principal $\mathbb{X}$-bundle over the space of objects $X_0$.  The source map $s: X_1 \rightarrow X_0$ is the anchor map while the groupoid composition gives the action, $(g, g')\mapsto g\circ g'$. These bundles are called \textit{Unit bundles}. The following diagram represents a unit bundle.
\[
\begin{tikzcd}
        & X_1 \arrow[dr, "s"]  \arrow[dl, "t"']&  X_1 \arrow[d, shift left] \arrow[d, shift right]\\
        X_0 & & X_0.\\
    \end{tikzcd}
\]
\end{example}

\begin{example}[Pullback bundle]\label{example:pullback principal bundle}
    Suppose $\pi\colon P \rightarrow M$ is a principal $\mathbb{X}$-bundle over the manifold $M$ and $f\colon N \rightarrow M$  a smooth map. Then the map $\text{pr}_1\colon N \times_{M} P \rightarrow N$ produces a principal $\mathbb{X}$-bundle over $N$, namely the \textit{pull-back bundle} of $\pi\colon P \rightarrow M$ by $f.$ We denote this pull-back bundle by $f^*P\to N.$ To elaborate,  the action of $\mathbb{X}$ on $N\times_{M} P$ is given by the anchor map $a \circ {\rm pr}_2\colon N \times_{M} P \rightarrow X_0,  (n, p)\mapsto a(p)$ and the action map $\tilde{\mu}\colon \left(N \times_{M} P\right)_{X_0} X_1 \rightarrow \left(N \times_{M} P\right), \, \bigl((n, p), g\bigr)\mapsto (n, pg).$
\end{example}
In particular, a pullback of the unit bundle $t\colon X_1 \rightarrow X_0$
along a smooth map $f\colon M \rightarrow X_0$ produces the pullback bundle $f^* X_1:=M \times_{X_0} X_1$ over $M$.  A bundle obtained via pullback of a unit bundle is often called a \textit{trivial bundle}. 
\begin{example}[Restriction bundle]\label{example:restriction bundle}
Let $\pi\colon P\to M$ be an $\mathbb{X}$-bundle and $N\subset M$ be an embedded sub-manifold. Then the pullback along the inclusion $\mathfrak{i}\colon N\hookrightarrow M$ produces the \textit{restriction $\mathbb{X}$-bundle}  $P_N:=\mathfrak{i}^*P\to N$ over $N.$

\end{example}

\begin{rem} \label{rem:local triviality of X-torsor}
Any principal $\mbbX$-bundle $\pi\colon P\to M$ is locally trivial \cite{moerdijk2003introduction}.    \end{rem}

\begin{rem}\label{remark:unit bundle pull back stack}
It is well known that the map 
${\mathcal B}\mbbX\to {\mathfrak Man},  
\hskip 0.2 cm  (P\to M) \mapsto M$
    defines a stack over the site (with respect to \'etale topology) of the category of smooth manifolds ${\mathfrak Man}$. Let $\underline{X_0}$ be the comma category over the manifold $X_0$ in $\rm Man;$ that is, its objects are smooth maps $f\colon N\to X_0,$ and an arrow from $(f_1\colon N\to X_0)$ to $(f_2 \colon M\to X_0)$ is a smooth map $h\colon N\to M$ such that $f_2\circ h=f_1.$ Then we have a functor $\underline{X_0}\to {\mathcal B}\mbbX$ which sends $(f\colon N\to X_0)$ to the pull-back $\mathbb{X}$-bundle $f^*X_1\to N$ of the unit principal $\mathbb{X}$-bundle $t\colon X_1\to X_0$ along $f$, and in turn $X_0$ provides an atlas or representative of the stack ${\mathcal B}\mbbX.$
\end{rem}


\section{Representations of  a Lie groupoid}\label{section:connection of a Lie groupoid}
In this section, we focus on the representation theory of Lie groupoids, particularly representations of a Lie groupoid up to a homotopy, which plays a central role in this paper. The content is more or less standard and well-studied in literature \cite{cran}, though we occasionally provide our insights and proofs.

\begin{defn}[Lie algebroid]\label{defn:lie algebroid}[\cite{MR1973056}]     A \textit{Lie algebroid} over the manifold $M$ is a vector bundle $p:A \rightarrow M$ along with a vector bundle map (anchor) $A \xrightarrow{\rho} TM$ with a Lie bracket on the space $\Gamma(A)$ of the sections of $A$
    satisfying the Leibniz identity:
    \[
    [\alpha, f\beta]=f[\alpha, \beta]+\rho(\alpha)(f)\beta,
    \]
for every $\alpha, \, \beta \in \Gamma(A)$ and $f \in C^{\infty}(M)$.
Diagrammatically, one can express this as follows:
\[
\begin{tikzcd}
    A \arrow[r, "\rho"] \arrow[d, "p"] & TM\\
    M
\end{tikzcd}
\]
\end{defn}

\begin{example}\label{examp:Ordinary Lie algebra}
  Any Lie algebra $\mathcal L$ trivially defines a Lie algebroid, given by the vector bundle $\mathcal L\to *$.  
\end{example}

\begin{example}[Tangent Bundle]\label{example:tangent bundle as lie algebroid}
    The tangent bundle $TM$ of a manifold $M$ is a Lie algebroid over the manifold $M$. Here, the anchor map is the identity map, while the usual Lie Bracket of the vector fields gives the Lie bracket. 
\end{example}
\begin{example}[Atiyah algebroid]\label{example:Atiyah  algebroid}
Let $\pi\colon P\to M$ be a principal $G$-bundle. Then the vector bundle ${\rm At}(P):=TP/G\to M$ along with the (anchor) map ${\rm At}(P)\to TM,\hskip 0.2 cm  
[(p, v)]\mapsto (\pi(p), d\pi_{p}(v))$ defines an algebroid  known as \textit{Atiyah algebroid}.

\end{example}

Let $\mbbX$ be a Lie groupoid. For a fixed $h\in X_1$, we have a pair of smooth maps, respectively, for the right and left compositions 
\begin{equation} \label{Equation:Action of fixed arrow}    \begin{split}
 R_h\colon & s^{-1}({t(h)})\to X_1, \, g\mapsto gh\\
L_h\colon & t^{-1}({s(h)})\to X_1, \, g\mapsto hg.
\end{split}
\end{equation}

A smooth vector field  $\chi$ on $X_1$ is called  right  invariant,  if 
 \begin{itemize}
\item  {$\chi$ is tangent to the fibres of the source map:
    $\chi_g \in  \ker(ds)_g,   \forall g \in X_1.$}
\item {If $g,h$ are two arrows such that $s(g)=t(h)$, then $\chi_{gh}=(dR_h)_{g}(\chi_g),$ where $(dR_h)_g\colon T_g s^{-1}({t(h)})\to T_{gh}X_1$ is the differential of the map in \cref{Equation:Action of fixed arrow}.}    
\end{itemize}
Note that the first condition implies right invariant vector field $\chi\colon X_1\to TX_1$ is a section $\chi\colon X_1\to \ker(ds) $ of the subbundle $\ker(ds)\to X_1.$
It is not difficult to see that the right invariant vector fields are closed under the usual Lie bracket of vector fields.

\begin{example}[Associated Lie Algebroid]\label{example:associated Lie algebroid}
    For a Lie groupoid  $[X_1 \rightrightarrows X_0]$, one can associate a Lie Algebroid $A$ over $X_0$ as follows. Consider 
    the differential of the source map $ds\colon TX_1\to TX_0.$ Then $\ker ds\subset TX_1$ is a subbundle over $X_1,$ and pulling back $\ker ds\to X_1$ along the unit map $1\colon X_0\to X_1, x\mapsto 1_x$ we obtain a vector bundle $A:=1^*\ker ds=\bigl\{(x, 1_x, \alpha) \mid \alpha \in T_{1_x} s^{-1}(x)\bigr\}$ over $X_0.$ The anchor map is given by the restriction of $dt$ along a fibre  $T_{1_{x}}s^{-1}(x);$
    \begin{equation}\label{equation:Anchor map associated}
        \begin{split}
&\rho\colon A \rightarrow TX_0,\\
&(x, 1_x, \alpha) \mapsto \bigl(x, (dt|_{s^{-1}(x)})_{1_x}(\alpha)\bigr).            
        \end{split}
    \end{equation}
     The Lie algebra structure on $\Gamma(A)$ is given by the well-known one-to-one correspondence between $\Gamma(A)$ and the space of right invariant vector fields \cite{cran}. Thus, we have the associated  Lie algebroid:\[
\begin{tikzcd}
    1^* \ker(ds) \arrow[r, "dt"] \arrow[d, "\text{pr}"] & TX_0\\
    X_0.\\
\end{tikzcd}
\]

\end{example}

\begin{example} \label{example:ordinary associated Lie algebra}
In particular, as a special case of the last example, viewing a Lie group $G$ as a Lie groupoid  $G \rightrightarrows *$,  one sees that the associated Lie algebroid  $L(G)\rightarrow *$ is nothing but the Lie algebra $L(G)$ of the Lie group $G.$
\end{example}

As we saw in \cref{example:associated Lie algebroid}, one can associate a Lie algebroid with a Lie groupoid, as for a Lie algebra with a Lie group. However, the relation between a Lie groupoid and its algebroid  distinctly departs from that of a Lie group and Lie algebra in two crucial aspects:
\begin{enumerate}
    
\item Failure of ``\textit{Lie's third Theorem}"; That is, given a Lie algebroid, it is not always possible to find a Lie groupoid such that the corresponding Lie algebroid of it coincides with the first one. 

\item There is no natural (adjoint) action or representation of a Lie groupoid on its Lie algebroids.  
\end{enumerate}
    For (1), a Lie algebroid which corresponds to a (unique) Lie groupoid is called an \textit{integrable Lie algebroid}. For example, the Atiyah Lie algebroid of \cref{example:Atiyah  algebroid} integrates to the Atiyah Lie groupoid of \cref{example:Atiyah groupoid}. One can find an example of a nonintegrable Lie algebroid in \cite{MR1973056} arising from a smooth differential closed $2$-form on a manifold. In the same paper, Crainic and Fernandes discovered the obstructions that prevent a Lie algebroid from being integrable. We refer to \cite{fernandes2006lecturesintegrabilityliebrackets} for a more detailed exposition and examples of nonintegrable Lie algebroids. For (2),  the situation can be partially salvaged by introducing the notion of representation of a Lie groupoid up to homotopy.


\subsection{Representation up to homotopy of a Lie groupoid}
    It is apparent from \cref{defn:right action of lie groupoid} that the natural object for defining a representation of a Lie groupoid $\mbbX$ would be a vector bundle $E\to X_0$ over $X_0.$
\begin{defn}[Representation of a Lie groupoid, \cite{cran}]\label{defn:representation of a Lie groupoid}
    Let $\mbbX$ be a Lie groupoid and $\pi\colon E \rightarrow X_0$  a vector bundle. A \textit{representation} of $\mathbb{X}$ on $E$ is a  (left) action $\lambda\colon X_1\times_{\pi, s} E \to E$ of (see, \cref{rem:equivalance of right and left action})    of $\mbbX$ on $E$ such that  
     restriction of $\lambda$ along a fibre of $g$,    
    $$\lambda_g\colon E_{s(g)} \rightarrow E_{t(g)}$$ is a linear isomorphism.  
    \end{defn}

According to conditions (2), (3) in \cref{defn:right action of lie groupoid} 
\begin{equation} \label{eqn:functoriality of representation}
\begin{split}
 &\lambda_{gh}=\lambda_g  \lambda_h,\, \,  {\rm{for\,  every}}\,  (g,h) \in X_1\,\,  {\rm{satisfying}}\,\, s(g)=t(h), \\
        &\lambda_{1_x}={\rm Id}_{E_x}, {\rm{for\, every}}\, x \in X_0.
\end{split}
       \end{equation}
We often refer to the first and second conditions of the above equation as \textit{functoriality} and \textit{unitality of representation}, respectively.  A \textit{quasi representation} of $\mathbb{X}$ on $E$ is merely a (left) quasi action $\lambda\colon X_1 \times_{X_0} E \to E$ such that $\lambda_g \colon E_{s(g)} \rightarrow E_{t(g)}$ is a fibrewise linear isomorphism, for each $g \in X_1.$
\begin{example}
 It is obvious that a representation of a Lie group $G$  on a vector space $V$ is the same as the representation of the Lie groupoid  $G\to *$ on the trivial vector bundle $V\to *.$
\end{example}
\begin{example}\label{example:quasi representation on trivial bundle}
    Suppose $\mbbX$ is a Lie groupoid and $X_0 \times V \rightarrow X_0$  a trivial vector bundle $X_0.$ Then the map $\lambda:X_1 \times_{X_0} (X_0 \times V) \rightarrow X_0 \times V$, $\lambda(g,s(g),v)=(t(g),-v)$ defines a representation which is neither unital nor functorial.  More generally, if $\lambda$ is a representation of $\mbbX$ on a vector bundle $E \rightarrow M,$ then $\tilde{\lambda}(g,s(g),v)=(t(g),-\lambda_g(v))$ provides a quasi representation which is not a representation.
\end{example}
As it turns out, this definition is too restrictive and limited in scope. For example, as mentioned earlier, there is no natural representation of a Lie groupoid on its Lie algebroid. In \cite{cran}, authors have introduced the concept of representation of a Lie groupoid up to homotopy and extended the adjoint representations of Lie Groups to the Lie Groupoid setup. The main ingredient of the theory is to define the representation on a graded vector bundle rather than on a traditional vector bundle so that the functroiality condition of action can be replaced by a condition which holds up to a ``homotopy'' relation. In what follows, relying on \cite{cran}, we review the theory of representations up to the homotopy necessary for this article. 

Another perspective on viewing the notion of representation of a Lie groupoid is via the nerve of a Lie groupoid.  Let $\mbbX$ be a Lie groupoid. Since the source and target maps are both smooth submersions, for
	each $i\geq 0,
X_i\,:=\,\underbrace{X_1\times_{X_0}\cdots\times_{X_0}X_1}_{i-{\rm times}}$
 is a smooth manifold, and  
	associated to that there are  $i+1$ canonical projection maps ${\rm pr}_{i, \alpha}\colon X_i\rightarrow X_{i-1}.$ Thus we get a simplicial manifold $\mathbb{X}_{\bullet}$, known as the \textit{nerve of the Lie groupoid} $\mbbX.$
\begin{equation}\label{equation:Simplicial manifold associated to lie groupoid}
		\xymatrix{
		\mathbb{X}_\bullet\,=\,\biggl\{\ldots\ar@<-1.5ex>[r]\ar@<-.5ex>[r]\ar@<1.5ex>[r]\ar@<.5ex>[r] & X_2
			\ar[r]\ar@<1ex>[r]\ar@<-1ex>[r] & X_1\ar@<-.5ex>[r]\ar@<.5ex>[r]
			&X_0\,}\biggr\}.
	\end{equation}
The source-target maps extend to  $s\colon X_k \rightarrow X_0,s(g_1,\ldots,g_k)=s(g_k)$ and $t\colon X_k \rightarrow X_0,t(g_1,\ldots,g_k)=t(g_1)$.  Suppose $E \rightarrow X_0$ is a vector bundle over $X_0$ and let $\Gamma(X_k,t^*E)$ denote the space of smooth sections of the vector bundle $t^* E \to X_k.$  Now, form a graded vector space $\Gamma_{\bullet}(\mathbb{X},E)$ whose degree $k$ part is $\Gamma(X_k,t^*(E));$
\begin{equation}\label{equation:Graded vector space with non graded bundle}
\Gamma_{\bullet}(\mathbb{X},E)=\bigoplus \Gamma(X_k,t^{*}E).
\end{equation}
In particular for the trivial line bundle $X_0 \times \mathbb{R} \to X_0$ we get that 
$\Gamma(X_k,t^*(X_0 \times \mathbb{R}))=\Omega^{0}(X_k),$ the space of real-valued smooth functions  on $X_k$ and the corresponding  graded vector space has a cochain complex structure with the differential $\delta(f)=\sum\limits_{i=1}^{k}(-1)^{i}f \circ \text{pr}_{i, k}$:
\begin{equation}\label{equation:complex associated to the nerve}
\begin{split}
    &\xymatrix{
     \Omega^{0}(X_0) \xrightarrow{\delta} \Omega^{0}(X_1) \xrightarrow{\delta} \Omega^{0}(X_2) \ldots
    }\\
    &\Omega_{\bullet}(\mathbb{X}):=\bigoplus_k \Omega^{0}(X_k).
    \end{split}
\end{equation}
  Moreover, $\Omega_{\bullet}(\mathbb{X})$ has a natural algebra structure 
\begin{equation}\label{equation:algebra structure of complex}
    f  h(g_1,g_2,\ldots,g_{k+l})={} (-1)^{kl}f(g_1,\ldots,g_k)h(g_{k+1},\ldots,g_{k+l}),
\end{equation}
for every $f \in \Omega^{0}(X_k)$ and $h \in \Omega^{0}(X_l),$ and  the differential $\delta$ satisfies the Leibniz identity
\begin{equation}\label{equation:Leibniz identity}
\delta(f  h)=\delta(f)  h + (-1)^{\text{deg} \, f}f  \delta(h).
\end{equation}
Thus, the complex $\Omega^{\bullet}(\mathbb{X})$ in \cref{equation:complex associated to the nerve} becomes a  Differential graded algebra (DGA).

   For an arbitrary vector bundle $E \rightarrow X_0$ the space $\Gamma_{\bullet}(\mathbb{X}, E)$ is a  right graded module over the DGA $\Omega_{\bullet}(\mathbb{X})$ with a scalar product defined in the following way:  
   Given  $\eta \in \Gamma({X_k,t^*E})$ and $f \in \Omega^{0}(X_k)$, define the scalar product $\eta  f \in \Gamma(X_{k+k'},t^*E)$ as
\begin{equation}\label{eqn:module structure over the complex}
    (\eta  f)(g_1,\ldots,g_{k+k'})={} (-1)^{kk'})f(g_{k+1},\ldots,g_{k+k'})*\eta(g_1,\ldots,g_k).
\end{equation}
Additionally, one may also explore the normalized subspace $\hat{\Gamma}_{\bullet}(\mathbb{X},E)$ of $\Gamma_{\bullet}(\mathbb{X},E)$ comprising of those $\eta$ with the property that $s_i^*(\eta)=0$, for all $i,$ where $s_i^*(\eta)$ denotes the pullback of $\eta$ along the degeneracy map $s_i:X_k \rightarrow X_{k+1},s_i(g_1,\ldots,g_k)=(g_1,\ldots,g_i,1,g_{i+1},\ldots,g_k)$ of the simplicial manifold $\mathbb{X}_{\bullet}$.  

Given a (left) quasi representation  $\lambda$ of $\mbbX$ on $E \rightarrow X_0$ we have a linear isomorphism $\lambda_{g}\colon E_{s(g)} \rightarrow E_{t(g)}$, for each $g \in X_1.$ Observe that for  $\eta \in \Gamma(X_k,t^{*}E)$ and $(g_1,\ldots, g_k)\in X_k,$
$$\eta(g_1, \ldots, g_k)\in t^* E |_{g_1, \ldots, g_k}=E_{t(g_1)}=E_{s(g_2)}.$$
Then, one may associate a degree one operator 
$D_{\lambda}\colon \Gamma_{\bullet}(\mathbb{X},E) \rightarrow \Gamma_{\bullet}(\mathbb{X},E)$ which sends a degree $k$ vector $\eta \in \Gamma(X_k,t^{*}E)$ to a degree $k+1$ vector $D_{\lambda}(\eta) \in \Gamma(X_{k+1},t^*E^l)$:
\begin{align}\label{equation:degree one operator associated to a quasi action}
   \bigl(D_{\lambda}(\eta)\bigr)(g_1,\ldots, g_{k+1}) = {} & (-1)^k \biggl(\lambda_{g_1}\bigl(\eta(g_2,\ldots,g_{k+1})\bigr) \notag \\
     & {} + \sum\limits_{i=1}^{n}(-1)^{i}\eta(g_1,\ldots,g_i \circ g_{i+1},\ldots,g_{k+1}) + (-1)^{k+1}\eta(g_1,\ldots,g_k)\biggr).
\end{align}
The following characterisation of quasi-action is due to Abad and Crainic (Lemma 2.6 in \cite{cran}).   The association $\lambda \mapsto D_{\lambda}$ provides a one-to-one correspondence between quasi representations $\lambda$ of a Lie groupoid $\mbbX$ on a vector bundle $E \rightarrow X_0$ and degree $1$ operators $\mathcal D$ on $\Gamma_{\bullet}(\mathbb{X}, E)$ satisfying the Leibniz identity ${\mathcal D}(\eta  f)={\mathcal D}(\eta) f + (-1)^{k}\eta \delta(f),$  for every $\eta \in \Gamma(X_k,t^*E)$ and $f \in \Omega^{0}(X_k)$.  Moreover, $\lambda$ is unital if and only if the normalized subspace $\hat{\Gamma}_{\bullet}(\mathbb{X},E)$ is invariant under $D_{\lambda}$, and $\lambda$ is a representation if and only if it is unital and $D_{\lambda}^2=0$.

Now, suppose we have a graded vector bundle $E=\bigoplus\limits_{l \in Z} E^l$ over $X_0$ instead of a usual vector bundle, then for each $l$ and $k$ we have the space of sections $\Gamma(X_k,t^{*}E^l)$. In turn we arrive at  a bigraded vector space $C(\mathbb{X},E)^n$ with total grading given as
\begin{equation}\label{Equation: E Valued cochain}
    \begin{split}
&  C(\mathbb{X},E)^n=\bigoplus\limits_{k+l=n}
\Gamma(X_k,t^*E^l),\\
& C(\mathbb{X},E)^\bullet=\bigoplus_n C(\mathbb{X},E)^n.   \end{split}
\end{equation}
As in \cref{equation:algebra structure of complex}, $C(\mathbb{X},E)^{\bullet}$ becomes a bigraded right $\Omega^{\bullet}(\mathbb{X})$-module in an obvious manner. 
 The bijective correspondence $\lambda \rightarrow D_{\lambda}$  in Lemma 2.6 in \cite{cran} allows Abad and Crainic to define a representation up to homotopy as follows.

\begin{defn}[Representation upto homotopy,\cite{cran}]\label{defn:representation upto homotopy}
A representation up to homotopy of a Lie Groupoid $\mathbb{X}$ consists of a graded vector bundle $E=\bigoplus E^l$ over $X_0$ together with a linear degree one operator (structure operator)
\[
D\colon C\left(\mathbb{X},E\right)^{\bullet}
  \rightarrow  C\left(\mathbb{X},E\right)^{\bullet}
\]  
satisfying $D^2=0$ and the Leibniz identity.
\[
D(\eta \, \star \, f)=D(\eta) \, \star \, f + (-1)^k \eta \, \star \, \delta(f)\\
\]
for $\eta \in \Gamma(X_k,t^*E)^k$ and $f \in \Omega_{\bullet}(\mathbb{X}).$
\end{defn}

Fix a $k\in +{\mathbb Z}.$ For a graded vector bundle $E=\bigoplus\limits_{l \in Z} E^l$ over $X_0,$ consider the graded vector bundles $s^*(E^\bullet):=s^*(\bigoplus_l E^l)$ and $t^* (E^{\bullet+1-k}):= t^* (\bigoplus_l E^{l+1-k})$ over $X_k.$ Let ${\rm Hom}\bigl(s^*(E^\bullet), t^* (E^{\bullet+1-k}\bigr)$ be the Hom-bundle  over $X_k.$  Conventionally, for the case $k=0$, ${\rm Hom}\bigl(s^*(E^{\bullet}),t^*(E^{\bullet+1})\bigr)$ denotes the Hom-bundle ${\rm Hom}\bigl(E^{\bullet}, E^{\bullet+1}\bigr)$ over $X_0$.

\begin{prop}[Proposition 3.2 in \cite{cran}]\label{characterization of representation upto homotopy}
    There is a one-to-one correspondence between a representation up to homotopy of a Lie groupoid  $\mathbb{X}$ on a graded vector bundle $E=\bigoplus\limits_{l \in Z} E^l$ and a sequence of operators
    ${(R_k)}_{k \geq 0} \in \Gamma\bigl(X_k,{\rm Hom}(s^*(E^{\bullet}),t^*(E^{\bullet+1-k})\bigr)$  satisfying the condition
\begin{align}\label{equation:compatability relations between R_k's}
    \sum_{j=1}^{k-1}(-1)^jR_{k-1}(g_1,g_2,\ldots,g_j \circ g_{j+1},\ldots,g_k)= {} & \sum_{j=0}^{k} (-1)^jR_{j}\left(g_1,g_2,\ldots,g_j\right) \circ R_{k-j}\left(g_{j+1},\ldots,g_{k}\right).
    \end{align}
\end{prop}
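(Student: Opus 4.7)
The plan is to decompose the structure operator $D$ according to the bigrading
\[
C(\mathbb{X}, E)^n = \bigoplus_{k+l=n} \Gamma(X_k, t^* E^l)
\]
of \cref{Equation: E Valued cochain}. Since $D$ has total degree one, it admits a unique decomposition $D = \sum_{k \geq 0} D_k$ where $D_k$ sends $\Gamma(X_j, t^* E^l)$ into $\Gamma(X_{j+k}, t^* E^{l+1-k})$. Because $t \colon X_k \to X_0$ is a surjective submersion, any $\xi \in \Gamma(X_k, t^* E^l)$ can be written as a finite sum $\xi = \sum_i \eta_i \star f_i$ with $\eta_i \in \Gamma(E^l)$ and $f_i \in \Omega^0(X_k)$. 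The Leibniz identity in \cref{defn:representation upto homotopy} then determines $D$ from its restriction to the ``core'' $\Gamma(E^\bullet) = \Gamma(X_0, t^* E^\bullet)$.

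Next I would extract the section $R_k$ from $D_k$. Restricting $D_k$ to $\Gamma(E^l)$ gives a linear map $\Gamma(E^l) \to \Gamma(X_k, t^* E^{l+1-k})$; testing the Leibniz identity against $f \in C^\infty(X_0)$ and isolating the simplicial-degree-$k$ part shows that, for $k \neq 1$, this map is $C^\infty(X_0)$-linear with respect to the source pull-back, while for $k = 1$ the non-linear $\delta f$ contribution is exactly the combinatorial face piece already present in the formula of \cref{equation:degree one operator associated to a quasi action}. Either way, the ``non-combinatorial'' content of $D_k|_{\Gamma(E^\bullet)}$ is captured by a genuine $X_k$-parametrised family of linear maps $R_k(g_1, \ldots, g_k) \colon E_{s(g_k)}^\bullet \to E_{t(g_1)}^{\bullet+1-k}$, that is, a section $R_k \in \Gamma\bigl(X_k, \mathrm{Hom}(s^*E^\bullet, t^*E^{\bullet+1-k})\bigr)$; the $k = 1$ case recovers the quasi-action of Lemma 2.6 of \cite{cran}.

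With the $R_k$'s extracted, I would translate $D^2 = 0$ into the claimed compatibility relation \cref{equation:compatability relations between R_k's}. Decomposing by simplicial degree,
\[
0 = (D^2)_k = \sum_{j=0}^{k} D_j \, D_{k-j},
\]
and evaluating both sides on a core section $\eta \in \Gamma(E^\bullet)$ at $(g_1, \ldots, g_k) \in X_k$ produces two families of terms. The ``composition'' terms $R_j(g_1, \ldots, g_j) \circ R_{k-j}(g_{j+1}, \ldots, g_k)$ come directly from evaluating $D_j$ on the part of $D_{k-j}(\eta)$ produced by $R_{k-j}$. The ``face'' terms $R_{k-1}(g_1, \ldots, g_i g_{i+1}, \ldots, g_k)$ arise when one writes $D_{k-j}(\eta)$ locally as $\sum_i \xi_i \star h_i$ and pushes $D_j$ through the $\star$-product via the Leibniz rule: the $\delta h_i$ contribution gathers a face map composition into a single $R_{k-1}$. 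Matching these two families gives \cref{equation:compatability relations between R_k's}. For the converse, given $(R_k)$ one defines $D$ on the core by the displayed formula, extends by the Leibniz rule, checks well-definedness, and observes that \cref{equation:compatability relations between R_k's} is precisely the obstruction to $D^2 = 0$.

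The main obstacle is the sign bookkeeping. The Koszul signs from \cref{eqn:module structure over the complex}, the alternating signs from $\delta$, and the factor $(-1)^k$ in \cref{equation:degree one operator associated to a quasi action} must combine so that $\sum_j (-1)^j R_j \circ R_{k-j}$ and $\sum_j (-1)^j R_{k-1}(\ldots, g_j g_{j+1}, \ldots)$ appear with the correct relative signs. A secondary subtlety is that the well-definedness of the extension of $D$ via the Leibniz rule depends on the decomposition $\xi = \sum_i \eta_i \star f_i$ being coherent with respect to the module relations, which can be handled by a partition-of-unity argument on $X_k$.
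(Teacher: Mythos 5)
The paper does not prove this proposition; it is quoted verbatim from Proposition 3.2 of \cite{cran}, so the only available comparison is with Abad--Crainic's original argument. Your sketch follows exactly that argument: decompose $D$ by the shift in simplicial degree, use the Leibniz identity and the fact that $\Gamma(X_k,t^*E)$ is generated over $\Omega^0(X_k)$ by core sections to identify each component with a section $R_k$ of the Hom-bundle (with the $\delta f$ term absorbed into the $k=1$ piece), and read off \cref{equation:compatability relations between R_k's} from the simplicial-degree decomposition of $D^2=0$. The strategy is correct and is the same as the cited proof; the only substantive work you have deferred --- the Koszul sign bookkeeping that makes the relative signs in \cref{equation:compatability relations between R_k's} come out right, and the coherence of the extension of $D$ by the Leibniz rule across different decompositions $\xi=\sum_i\eta_i\star f_i$ --- is precisely what the written-out proof in \cite{cran} supplies.
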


\begin{rem}
  In a general setup, one has to keep track of higher $R_j, j>2.$ Our primary interest will be the representation (up to homotopy) of a Lie groupoid on its Lie algebroid, which will involve a bi-graded vector bundle; that is $l\in \{0, 1\}.$ And thus, we do not need to go beyond $R_2$ for practical purposes.
\end{rem}
\subsection{Connection of a Lie groupoid}\label{subsection:Connection of a Lie groupoid}
To define a representation (up to homotopy) of a Lie groupoid on its Lie algebroid, we need to introduce the notion of a connection on a Lie groupoid. One may say that the underlined idea is motivated by the case where we have a space of smooth paths $PM$ over a manifold $M.$ Then a connection $\nabla$ on the vector bundle $TM\to M,$ associates a linear isomorphism ${\nabla}_{\gamma}\colon T_{s(\gamma)}M\to T_{t(\gamma)}M$ with a path $\gamma,$ known as parallel transport. Now on a Lie groupoid $\mbbX,$ a connection has to associate a map $T_{s(g)}X_0\to T_{t(g)}X_0$ for any $g\in X_1.$ 

Recall given a Lie group $G$ and an element $g\in G$ we have the adjoint action  by ${\rm Ad}_g\colon G\to G, h\mapsto ghg^{-1}.$ The differential of this map at $h=e$ provides the adjoint representation of $G$ on its Lie algebra $L(G)\simeq T_e(G),$ $$\bigl(d{\rm Ad}_g\bigr)_{e}\colon L(G)\to L(G).$$

Now, consider a Lie groupoid $\mbbX$ and its associated Lie algebroid $A=1^*({\text{ker}} \, ds) \rightarrow X_0$ as constructed in \cref{example:associated Lie algebroid}.  To give a representation of $\mathbb{X}$ on $A \rightarrow X_0$, one has to give a linear isomorphism $\lambda_g\colon A_{s(g)} \rightarrow A_{t(g)}$, for each $g \in X_1$.  Suppose $g \in X_1$ and $\alpha \in A_{s(g)}= {\text{ker}} \, (ds)_{1_{s(g)}}$.  
However, the left action defines a map $t^{-1}(s(g)) \rightarrow t^{-1}(t(g)), h\mapsto gh,$ and the differential of which gives a map on $T_{1_{s(g)}}t^{-1}(s(g))={\text{ker}} \, (dt)_{1_{s(g)}}.$ But $u$ may not lie in $T_{1_{s(g)}}t^{-1}(s(g)).$ Thus, to define an action, we need some unique way of projecting an arbitrary vector in $T_g X_1$ onto ${\rm ker} (ds_g)\subset T_g X_1.$

\begin{defn}[Connection on a Lie groupoid]\label{defn: connection on a lie groupoid}
    An Ehresmann connection on $\mbbX$ is a smooth subbundle $\mathbb{H} \subseteq TX_1$ which complements ker $ds$, that is $TX_1=\mathbb{H} \oplus {\text{ker}} \, ds$ and has the property that $\mathbb{H}_{1_x}=(d1)_x(T_xX_0)$, for every $x \in X_0$.
\end{defn}

It is evident that the associated Lie algebroid naturally fits into a short, exact sequence of vector bundles:
\begin{equation}\label{sequence diagram:short exact sequence with right translations}
\begin{tikzcd}
    0 \arrow[r] & t^*A \arrow[r, "r"] & TX_1 \arrow[r, "ds"] & s^*TX_0 \arrow[r] & 0.
\end{tikzcd}
\end{equation}
The map $r\colon t^*A \rightarrow TX_1$ is given by the right translations: 
\begin{equation}\label{equation:small r map}
    r\left(g,t(g),1_{t(g)},\alpha\right)=\left(g,(dR_g)_{1_{t(g)}}(\alpha)\right),
\end{equation}
 where $R_g\colon s^{-1}(t(g)) \rightarrow s^{-1}(s(g))$ is as in \cref{Equation:Action of fixed arrow}. Observe that the inverse map of $R_g$ is given by $R_{g^{-1}}\colon s^{-1}(s(g)) \rightarrow s^{-1}(t(g))$, $R_{g^{-1}}(h)=hg^{-1}$, and
thus, 
\begin{equation}\label{eqn:Inverse of dR_g}
\begin{split}(dR_g)_{1_{t(g)}}^{-1}={} &(dR_{g^{-1}})_{g}.  
\end{split}
\end{equation}

A more explicit description of  $dR_g$ is given by 
\begin{equation}\label{eqn:description of right translations at tangent level}
    \begin{split}
        (dR_g)_{1_{t(g)}}(\alpha)={} & \bigl(1_{t(g)}.g,\alpha.0\bigr).
    \end{split}
\end{equation}
Considering this, one can see that the three statements below are equivalent. 
\begin{prop}
The following statements are equivalent. 
\begin{enumerate}
     \item An Ehresmann connection $\mathbb{H}$ on $X$.
     \item A smooth left splitting $\sigma:s^*TX_0 \rightarrow TX_1$ such that $\sigma_{1_x}=(d1)_x$, for every $x \in X_0$.
     \item A smooth right splitting $\psi\colon TX_1 \rightarrow t^*A$  such that $\psi_{1_x}={\rm Id}-d(1 \circ s)_{1_x}$, for every $x \in X_0$.
\end{enumerate}  
\end{prop}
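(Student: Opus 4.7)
The plan is to leverage the short exact sequence of smooth vector bundles
\[
0 \to t^*A \xrightarrow{r} TX_1 \xrightarrow{ds} s^*TX_0 \to 0
\]
from \cref{sequence diagram:short exact sequence with right translations}. Since $R_g$ is a diffeomorphism of $s^{-1}(t(g))$ onto $s^{-1}(s(g))$, the map $r$ is a fibrewise linear isomorphism onto $\ker ds$, so this is a genuine short exact sequence of vector bundles over $X_1$. By elementary linear algebra (applied fibrewise, with smoothness automatic from the explicit formulas), the following pieces of data are equivalent: (a) a complementary subbundle $\mathbb{H}$ of $\ker ds$ in $TX_1$, (b) a splitting $\sigma$ of $ds$, and (c) a splitting $\psi$ of $r$. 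Thus the real content of the proposition is to match the pointwise normalisation conditions along the unit bisection.

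For (1) $\Leftrightarrow$ (2), I would make the bijection explicit: given $\mathbb{H}$, the restriction $ds|_{\mathbb{H}}$ is fibrewise invertible, so $\sigma := (ds|_{\mathbb{H}})^{-1}$ serves as the splitting; conversely, $\mathbb{H} := \operatorname{Im}(\sigma)$. For the identity condition, the relation $s \circ 1 = \mathrm{Id}_{X_0}$ yields $ds \circ d1 = \mathrm{Id}$, so $(d1)_x$ is always a right inverse of $(ds)_{1_x}$, and any splitting at $1_x$ is determined by its image. Hence $\mathbb{H}_{1_x} = (d1)_x(T_xX_0)$ is equivalent to $\sigma_{1_x} = (d1)_x$.

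For (1) $\Leftrightarrow$ (3), given $\mathbb{H}$ let $P \colon TX_1 \to \ker ds$ denote the projection along $\mathbb{H}$ and define $\psi := r^{-1} \circ P$; conversely, since $\psi \circ r = \mathrm{Id}$, the subbundle $\mathbb{H} := \ker \psi$ is complementary to $\operatorname{Im}(r) = \ker ds$. To verify the pointwise condition, I would unpack $r$ at $1_x$ via \cref{eqn:description of right translations at tangent level}: $(dR_{1_x})_{1_x}(\alpha) = (1_x \cdot 1_x, \alpha \cdot 0) = \alpha$, so $r$ restricted to the fibre over $1_x$ is the identity, hence $\psi_{1_x} = P_{1_x}$. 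Finally, the projection onto $\ker(ds)_{1_x}$ along $(d1)_x(T_xX_0)$ is precisely $\mathrm{Id} - (d1)_x \circ (ds)_{1_x} = \mathrm{Id} - d(1 \circ s)_{1_x}$, which gives the normalisation in (3). The main, and only, obstacle here is notational: one must carefully unwind the definition of $r$ at the unit bisection to see that the ad hoc expression $\mathrm{Id} - d(1 \circ s)_{1_x}$ really is the projection formula. The proposition is then just a dictionary between three equivalent bookkeeping devices for the same object.
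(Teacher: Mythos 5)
Your proposal is correct and follows essentially the same route as the paper: both arguments split the short exact sequence $0 \to t^*A \xrightarrow{r} TX_1 \xrightarrow{ds} s^*TX_0 \to 0$, taking $\sigma$ to be the inverse of $ds$ restricted to $\mathbb{H}$ (equivalently, the $\mathbb{H}$-component of any $ds$-preimage) and $\psi = r^{-1}\circ \mathrm{pr}_{\ker ds}$. You are in fact a bit more careful than the paper in checking the normalisations along the unit bisection, in particular the identification of $\mathrm{Id}-d(1\circ s)_{1_x}$ with the projection onto $\ker(ds)_{1_x}$ along $(d1)_x(T_xX_0)$, which the paper leaves implicit.
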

In particular, they are interrelated via the following equations.
\begin{equation}\label{eqn:relation between three definition of connections on lie groupoid}
    \begin{split}
        \mathbb{H}={\rm Im}(\sigma)=\ker(\psi), \,  & r \circ \psi+\sigma \circ ds={\rm Id}.
    \end{split}
\end{equation}
It is known that all three descriptions are equivalent.  
For the sake of completeness and details, we offer a proof here.
 \begin{proof}\label{eqn:proof of equivalence between three definitions of connection on lie groupoid}
 $(1) \Leftrightarrow (2)$, Suppose $\mathbb{H}$ is an Ehresmann connection on $\mathbb{X}$, then $TX_1=\mathbb{H} \oplus {\text{ker}} \,  (ds)$ and $\mathbb{H}_{1_x}=(d1)_x(T_xX_0)$. For any $u\in T_{s(g)}X_0,$ surjective submersion $s$ gives a $v \in T_gX_1$ such that $(ds)_g(v)=u$, for each $g \in X_1,$ then $\sigma\colon s^*TX_0 \rightarrow TX_1$, $\sigma(g, s(g), u))=v_0,$ where $v_0$ is the $\mathbb{H}_g$ component of $v,$ is a left splitting of \cref{sequence diagram:short exact sequence with right translations} satisfying  $(ds)_{1_x} \circ (d1)_x(u)=u,$ 
 and $\sigma$  does not depend on the choice of $v$.    Conversely, given such a left splitting $\sigma$ of \cref{sequence diagram:short exact sequence with right translations}, Im$(\sigma)$ is an Ehrasmann connection.\\  
$(1) \Leftrightarrow (3)$, Given an Ehresmann connection $\mathbb{H}$, define $\psi:TX_1 \rightarrow t^*A$ as $\psi(g,u)=(dR_{g^{-1}})_g(\text{pr}_2(u))$, where $\text{pr}_2(u)$ denotes the ${\text{ker}} \, ds$ component of $u$ with respect to the connection $\mathbb{H}$.  In the other direction, given such a $\psi$, ker $\psi$ serves the Ehrasmann connection on $\mathbb{X} $.
\end{proof}
\begin{rem}
A Lie groupoid admits a connection by Lemma 2.9 in \cite{cran}. 
\end{rem}
\begin{rem}\label{Remark:Sequence with respect to target}
Of course, one may choose to work with a dual notion of  a connection as a subbundle $\tilde{\mathbb{H}}$ complementing  $\ker dt,$ which  leads to the short exact sequence 
\begin{equation}\label{sequence diagram:short exact sequence with left translations}
\begin{tikzcd}
    0 \arrow[r] & s^*A \arrow[r, "l"] & TX_1 \arrow[r,"dt"] & t^*TX_0 \arrow[r] & 0.
\end{tikzcd}
\end{equation}
Here $l\colon s^*A \rightarrow TX_1$ denotes the counterpart of  \cref{equation:small r map} given by:
\begin{align}\label{equation:small l}
    l(g,s(g),1_{s(g)},\alpha)     ={} & \left(g,d (L_g \circ i)_{1_{s(g)}}(\alpha)\right),
\end{align}                 
  for $L_g \circ i\colon s^{-1}(s(g)) \rightarrow t^{-1}(t(g)), L_g(h)=gh^{-1}.$ 
That is, for every $\alpha \in T_{1_{s(g)}}s^{-1}(s(g)=A_{s(g)}$ we have,
\begin{equation}\label{eqn:description of left translations at tangent level}
   \begin{split}
    l_{g}(\alpha)=(dL_g)_{1_{s(g)}}((di)_{1_{s(g)}}(\alpha))= {} & \big(g.1_{s(g)},0.(di)_{1_{s(g)}}(\alpha)\bigr).
\end{split}
\end{equation}
It is obvious that both notions are equivalent and interlinked via the relation: $\tilde{\mathbb{H}}_{g}=(di)_{g^{-1}}\mathbb{H}_{g}$.
\end{rem}

A Lie groupoid $\mathbb{X}$ equipped with an Ehresmann connection will be denoted as $\bigl(\mathbb{X},\mathbb{H}\bigr)$ or $\bigl(\mathbb{X},\sigma\bigr)$ or $\bigl(\mathbb{X},\psi\bigr).$  Various other (non-equivalent) notions of connections also exist in literature. For example in  \cite{MR4592876}, a connection is defined as an Ehresmann connection along with the extra condition of functoriality given in \cref{quasi action of X on TX_0} on the induced linear maps $\lambda_g: T_{s(g)}X_0 \rightarrow T_{t(g)}X_0$, and this connection has been used for defining a Chern-Weil theory for a principal Lie group bundle over a Lie groupoid. A smooth subgroupoid $E \rightrightarrows TX_0$ of $TX_1 \rightrightarrows TX_0$ has been called a connection in \cite{BEHREND2005583}, and the author studies cofoliation on a stack. In \cite{MR2238946}, another notion of connection, namely \'etalification of a Lie groupoid, has been used for introducing pseudo-\'etale groupoids.  In \cite{doi:10.1142/S0219199721500929}, a connection on a Lie groupoid $\mbbX$ is a connection on the tangent bundle $TX_1 \rightarrow X_1$ along with compatibility conditions, which allows authors to relate with a connection on the tangent algebroid.
The definition introduced in \cite{cran} is perhaps the most general among all the others, and its existence is guaranteed. Throughout this paper, we will stick to that definition.  In some specific examples, we mention other notions of connections to relate to other works. 
Fixing an Ehresmann connection immediately provides a quasi action of $\mathbb{X}$ on both $A$ and $TX_0$ as follows
\begin{defn}\label{defn:quasi action of X on both A and TX_0}
    Suppose $\mathbb{H}$ is an Ehresmann connection on $\mbbX$. Then there is a natural quasi action of $\mathbb{X}$ on both $A$ and $TX_0,$ for respective anchor maps $A\to X_0, (x, 1_x, \alpha)\mapsto x$  and $TX_0\to X_0, (x, v)\mapsto x,$  given by
    \begin{align}
        \lambda_g^0(\alpha)= {} & -\psi_g(l_g(\alpha)) \label{quasi action of X on A},\qquad  \alpha \in A_{s(g)},\\
        \lambda_g^1(v)={} & (dt)_g(\sigma_g(v)) \label{quasi action of X on TX_0}, \qquad v \in T_{s(g)}X_0,
        \end{align}
    where $\sigma$ and $\psi$ denote the respective left and right splittings of \cref{sequence diagram:short exact sequence with right translations} associated to $\mathbb{H}.$
\end{defn}
\begin{rem}\label{remark:ordinaruadjointaction}
Observe that for a Lie groupoid of the form $G \rightrightarrows \star$, for a Lie group $G$, the above quasi action reduces to the adjoint action of $G$ on its associated Lie algebra $L({G})$.    
\end{rem}  

Given a connection $\sigma\colon s^*TX_0\to TX_1$ one can measure the deviation  $\lambda_{g h}^1-\lambda_g^1 \lambda_h^1,$ when $g, h$ are composable. For that, suppose  $g,h \in X_1$ are composable and $v \in T_{s(h)}X_0$.  Taking differential for the source consistency of the functor $s(g  h)=s(h)$, we see the vector $\sigma_{gh}(v)-\underbrace{(dm)_{g,h}\Bigl(\bigl(g,\sigma_g(\lambda_h^1(v)\bigr),\bigl(h,\sigma_h(v)\bigr)\Bigr)}_{\bigl(gh, 
 \sigma_g(\lambda_h^1(v))\sigma_h(v)\bigr)\,{\rm by\,notation\, in\, \cref{notation:differential of composition map}}}$ belongs to ${\text{ker}} \,(ds)_{gh}=\text{Im}\, r_{gh}$.  That means there exists a vector  $K_{\sigma}^{bas}(g,h)(v) \in (t^*A)_{gh}=A_{t(gh)}=A_{t(g)}$ satisfying
\begin{equation}\label{eqn:measure of failure of closedness of connection}
\begin{split}
    \sigma_{gh}(v)-\bigl(gh, 
 \sigma_g(\lambda_h^1(v)).\sigma_h(v)\bigr)    
    = & (dR_{gh})_{1_{t(gh)}}(K_{\sigma}^{bas}(g,h)(v)).
\end{split}
\end{equation}
In other words,  $K_{\sigma}^{bas}(g,h)(v)=-\psi_{gh}(gh, \sigma_g(\lambda_h^1(v)).\sigma_h(v))$.   
\begin{defn}[Definition 2.12 in \cite{cran}]
    Given a Lie groupoid with connection $(\mbbX, \mathbb H)$, the associated $\mathit{basic}$ $\mathit{curvature}$ is defined as 
    \begin{align}\label{eqn:basic curvature}
K_{\sigma}^{bas}(g,h)(v)=-\psi_{gh}(gh, \sigma_g(\lambda_h^1(v)).\sigma_h(v)).
    \end{align}
A connection with identically vanishing basic curvature is called  \textit{Cartan}. 
\end{defn}
 Particularly, the connection defined in \cite{MR4592876} is Cartan.  It is evident from the following proposition, due to  Abad, that the basic curvature measures the failure of the functoriality condition (mentioned in \cref{eqn:functoriality of representation}) for the (quasi) actions of \cref{defn:quasi action of X on both A and TX_0}.
\begin{prop}[Proposition 2.15 in \cite{cran}]\label{properties of Ehresmann connection}
   Let $(\mbbX, \mathbb H)$   be a Lie groupoid with connection.  Then
  \begin{enumerate}
      \item The anchor of the associated Lie algebroid $\rho\colon A=1^*{{\text{ker}}} \, ds \rightarrow TX_0, (x, 1_x, \alpha)\mapsto (x, dt_{1_x}(\alpha))$ is equivariant. 
      \begin{align}
      \rho(\lambda_g^0(\alpha))={} & \lambda_g^1(\rho(\alpha)),\label{eqn: rho comp lambda=lambda comp rho}
      \end{align} for every $\alpha \in A_{s(g)}$ and $g \in X_1.$
     \item For all $x \xrightarrow[]{h} y \xrightarrow[]{g} z, \, v \in T_xX_0, \, \alpha \in A_x $, we have
     \begin{align}
         \lambda_{gh}^1(v)- \lambda_g^1\lambda_h^1(v)={} & (dt)K_{\sigma}^{bas}(g,h)(v), \label{eqn:error of quasi action on TX_0}\\
         \lambda_{gh}^0(\alpha)-\lambda_g^0\lambda_h^0(\alpha)={} & K_{\sigma}^{bas}(g,h)(dt)(\alpha). \label{eqn:error of quasi action on A}
     \end{align}
     \item For all $x \xrightarrow[]{k} y \xleftarrow[]{h} z \xrightarrow{g} w$, \,  the basic curvature $K_{\sigma}^{bas}$ satisfies following compatibility condition:
     \begin{equation}\label{eqn: compatability conditions of basic curvature}
         \lambda_g^0K_{\sigma}^{bas}(h,k)-K_{\sigma}^{bas}(gh,k)+K_{\sigma}^{bas}(g,hk)-K_{\sigma}^{bas}(g,h)\lambda_k^1=0.
     \end{equation}
  \end{enumerate}
\end{prop}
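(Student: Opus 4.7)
The plan is to prove all four assertions by combining the splitting identity $r \circ \psi + \sigma \circ ds = \text{Id}_{TX_1}$ with the defining equation of the basic curvature,
\[
\sigma_{gh}(v) - \bigl(gh,\, \sigma_g(\lambda_h^1(v)) \cdot \sigma_h(v)\bigr) = r_{gh}\bigl(K_\sigma^{bas}(g,h)(v)\bigr),
\]
together with the differentials of the functorial identities $s \circ R_g = s(g)$, $t \circ R_g = t$, $s \circ L_g \circ i = t$, $t \circ L_g \circ i = t(g)$, which yield $ds \circ r = 0$, $dt \circ r = \rho$, $ds \circ l = \rho$, and $dt \circ l = 0$.

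For part (1), I apply the splitting identity to $l_g(\alpha) \in T_g X_1$. Using $\psi_g \circ l_g = -\lambda_g^0$ (the definition of $\lambda^0$) and $ds \circ l_g = \rho$, I obtain $l_g(\alpha) = \sigma_g(\rho \alpha) - r_g(\lambda_g^0 \alpha)$. Applying $dt$ and invoking $dt \circ l_g = 0$, $dt \circ \sigma_g = \lambda_g^1$, and $dt \circ r_g = \rho$ immediately gives $\rho(\lambda_g^0 \alpha) = \lambda_g^1(\rho \alpha)$. For part (2), I apply $dt_{gh}$ to the defining equation of $K_\sigma^{bas}$; since $t \circ m = t \circ \text{pr}_1$ forces $dt$ of the product term to equal $\lambda_g^1 \lambda_h^1(v)$, and since $dt \circ r_{gh} = \rho$, the equation reduces to $\lambda_{gh}^1(v) - \lambda_g^1 \lambda_h^1(v) = dt\bigl(K_\sigma^{bas}(g,h)(v)\bigr)$.

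For part (3), I substitute $v = \rho(\alpha)$ in the defining equation and rewrite each occurrence $\sigma_x(\rho \beta)$ as $l_x(\beta) + r_x(\lambda_x^0 \beta)$ via the decomposition from (1). The left-hand side becomes $l_{gh}(\alpha) + r_{gh}(\lambda_{gh}^0 \alpha)$. For the right-hand side, the tangent product
\[
\bigl[l_g(\lambda_h^0 \alpha) + r_g(\lambda_g^0 \lambda_h^0 \alpha)\bigr] \cdot \bigl[l_h(\alpha) + r_h(\lambda_h^0 \alpha)\bigr]
\]
in $T_{gh}X_1$ requires care: $dm$ distributes over sums only when each summand pair satisfies $ds\, u = dt\, v$. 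Checking source and target values of the four individual summands, the unique source-target compatible decomposition is $\bigl(l_g(\lambda_h^0 \alpha),\, r_h(\lambda_h^0 \alpha)\bigr) + \bigl(r_g(\lambda_g^0 \lambda_h^0 \alpha),\, l_h(\alpha)\bigr)$. I then establish via explicit curves the two tangent-product identities
\[
l_g(\beta) \cdot r_h(\beta) = 0, \qquad r_g(\gamma) \cdot l_h(\alpha) = r_{gh}(\gamma) + l_{gh}(\alpha),
\]
the first from $g(\tau) = g\, \gamma(\tau)^{-1}$, $h(\tau) = \gamma(\tau)\, h$ whose product $g(\tau) h(\tau) \equiv gh$ is constant, and the second by differentiating $\Phi(\mu, \nu) = \mu \cdot gh \cdot \nu^{-1}$ at $(1_{t(g)}, 1_{s(h)})$. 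Summing, the right-hand side equals $l_{gh}(\alpha) + r_{gh}\bigl(\lambda_g^0 \lambda_h^0 \alpha + K_\sigma^{bas}(g,h)(\rho \alpha)\bigr)$, and injectivity of $r_{gh}$ yields $\lambda_{gh}^0 \alpha - \lambda_g^0 \lambda_h^0 \alpha = K_\sigma^{bas}(g,h)(\rho \alpha)$.

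For part (4), I apply the defining equation of $K_\sigma^{bas}$ to $(gh, k)$ and to $(g, hk)$, equate the two resulting expressions for $\sigma_{ghk}(v)$, substitute $\sigma_{hk}(v)$ and $\sigma_{gh}(\lambda_k^1 v)$ via the same defining equation applied to $(h, k)$ and $(g, h)$, and rewrite $\sigma_g(\lambda_{hk}^1 v) = \sigma_g(\lambda_h^1 \lambda_k^1 v) + l_g(K_\sigma^{bas}(h,k)(v)) + r_g(\lambda_g^0 K_\sigma^{bas}(h,k)(v))$ using part (2) together with the decomposition of part (1). Executing the same source–target splitting of tangent products as in (3), now supplemented by $dm((\beta, 0_h)) = dR_h(\beta)$ for $\beta \in \ker ds$, the common triple products $\sigma_g(\lambda_h^1 \lambda_k^1 v) \cdot \sigma_h(\lambda_k^1 v) \cdot \sigma_k(v)$ on both sides cancel, and the cross-term $l_g(K_\sigma^{bas}(h,k)(v)) \cdot r_{hk}(K_\sigma^{bas}(h,k)(v))$ vanishes by the first identity established in (3); injectivity of $r_{ghk}$ in what remains delivers the stated cocycle identity. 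The main obstacle throughout is the tangent-product bookkeeping in (3) and (4): summands must be paired into composable couples before $dm$ can act, and the curve-based identities $l_g(\beta) \cdot r_h(\beta) = 0$ and $r_g(\gamma) \cdot l_h(\alpha) = r_{gh}(\gamma) + l_{gh}(\alpha)$ must be extracted by hand; once in place, the remainder is routine manipulation of the splitting relations.
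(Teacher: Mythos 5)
Your proof is correct. The paper itself does not prove this proposition --- it is imported verbatim (modulo a sign correction) from Proposition 2.15 of \cite{cran} --- so there is no in-paper argument to compare against; but your derivation is sound, self-contained, and consistent with the conventions of \cref{subsection:Connection of a Lie groupoid}. The one place where the argument genuinely requires care, the distribution of $dm$ over sums of tangent vectors, is handled properly: you verify that each proposed pair of summands satisfies $ds(u)=dt(w)$ before applying $dm$, which is exactly the constraint, and your two curve-based identities $l_g(\beta)\cdot r_h(\beta)=0$ and $r_g(\gamma)\cdot l_h(\alpha)=r_{gh}(\gamma)+l_{gh}(\alpha)$ both check out (the first is the identity the paper itself invokes as ``eq.\ 9 in \cite{cran}'' in \cref{eqn: first term for homotopy map for atiyah case}). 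Your signs agree with the corrected versions \cref{eqn:error of quasi action on TX_0} and \cref{eqn:error of quasi action on A} stated here rather than with the original source, which is the right target. A minor presentational point: in part (2) you only write out the $\lambda^1$ identity and defer the $\lambda^0$ identity to what you call part (3); since the decomposition $\sigma_x(\rho\beta)=l_x(\beta)+r_x(\lambda_x^0\beta)$ from part (1) is reused there and again in the cocycle identity, it would be worth isolating it as a displayed lemma.
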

\begin{rem}
    Here, we make a note that we believe there is a minor sign error in the corresponding equations of \cref{eqn:error of quasi action on TX_0}, \cref{eqn:error of quasi action on A} in Proposition 2.15  \cite{cran}.
\end{rem}
 Using \cref{characterization of representation upto homotopy} and \cref{properties of Ehresmann connection} one deduces:
\begin{prop}[Proposition 3.14 in \cite{cran}]\label{prop:adjoint representation}
    Suppose  $(\mathbb{X}, \mathbb{H})$ is a Lie groupoid  with a connection $\mathbb{H}$, then
    \begin{equation}
            R_0=\rho,  R_1=\lambda , R_2=K_{\sigma}^{bas}
    \end{equation} 
    gives a  unital representation up-to  homotopy of $\mbbX$  on the graded bundle $A \oplus TX_0.$
\end{prop}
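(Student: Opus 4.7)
My plan is to invoke the correspondence in \cref{characterization of representation upto homotopy} and verify the compatibility relations of \cref{equation:compatability relations between R_k's} directly, using the identities already proven in \cref{properties of Ehresmann connection}. I extend the triple $(\rho,\lambda,K_\sigma^{bas})$ to a full sequence by setting $R_k=0$ for $k\geq 3$. Because the graded bundle $E=A\oplus TX_0$ is concentrated in degrees $0$ and $1$, any graded map that shifts degree by $2$ or more is automatically zero; consequently only a handful of the compatibility conditions encode non-trivial content, and the whole proof reduces to a case-by-case bookkeeping in $k$.

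For $k=0$ the constraint $R_0\circ R_0=0$ is automatic since $E^2=0$. For $k=1$ it collapses to the graded commutator $[R_0,R_1]=0$, which is exactly the equivariance statement $\rho\circ\lambda^0_g=\lambda^1_g\circ\rho$ recorded in \cref{eqn: rho comp lambda=lambda comp rho}. For $k=2$, I restrict the relation first to the $TX_0$-summand, where the terms $R_2\circ R_0$ drop out (as $R_0$ vanishes on $TX_0$) and one recovers \cref{eqn:error of quasi action on TX_0}; then to the $A$-summand, where the term $R_0\circ R_2$ drops out (as $R_2$ vanishes on $A$) and one recovers \cref{eqn:error of quasi action on A}. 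For $k=3$ only $\lambda\circ K_\sigma^{bas}$, $K_\sigma^{bas}\circ\lambda$, and the two ``inner face'' terms $K_\sigma^{bas}(g_1g_2,g_3)$, $K_\sigma^{bas}(g_1,g_2g_3)$ survive, and the surviving identity matches \cref{eqn: compatability conditions of basic curvature}.

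For $k\geq 4$ the left-hand side involves $R_{k-1}$ with $k-1\geq 3$, which has been set to zero. On the right-hand side, every summand either contains an $R_j$ with $j\geq 3$ (hence vanishes), or, when $k=4$ and $j=2$, equals $R_2\circ R_2$; but $R_2$ takes values in degree $0$ while $R_2$ is zero on degree $0$, so this composite vanishes as well. Thus all higher relations hold trivially. Finally, unitality reduces to verifying $\lambda_{1_x}=\mathrm{Id}$ on both $A$ and $TX_0$, and $K_\sigma^{bas}(1_x,h)=K_\sigma^{bas}(g,1_x)=0$; the first follows from $\sigma_{1_x}=(d1)_x$ together with $t\circ 1=\mathrm{Id}$, while the second follows from the same normalisation combined with $\psi\circ\sigma=0$, which is a direct consequence of the splitting relation $r\circ\psi+\sigma\circ ds=\mathrm{Id}$ in \cref{eqn:relation between three definition of connections on lie groupoid}.

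The main obstacle is only organisational: tracking which components of each $R_j$ act on which summand of $A\oplus TX_0$, and keeping signs consistent in view of the author's noted sign ambiguity in \cref{eqn:error of quasi action on TX_0}--\cref{eqn:error of quasi action on A}. Once the degree-shift dictionary $R_0\colon A\to TX_0$, $R_1\colon A\to A$, $R_1\colon TX_0\to TX_0$, $R_2\colon TX_0\to A$ is laid out, each instance of \cref{equation:compatability relations between R_k's} matches one (or none) of the identities in \cref{properties of Ehresmann connection}, completing the proof.
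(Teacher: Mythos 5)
Your route is the same as the paper's: the proposition is obtained by feeding the three identities of \cref{properties of Ehresmann connection} into the correspondence of \cref{characterization of representation upto homotopy}, and your degree bookkeeping is correct — the $k=1,2,3$ relations match \cref{eqn: rho comp lambda=lambda comp rho}, \cref{eqn:error of quasi action on TX_0}--\cref{eqn:error of quasi action on A} and \cref{eqn: compatability conditions of basic curvature} respectively, and for $k\geq 4$ everything vanishes for degree reasons (in particular $R_2\circ R_2=0$ since $R_2$ lands in degree $0$ where $R_2$ is zero).

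The genuine gap is in the unitality of the degree-zero action. For $\lambda^1$ your argument is fine: $\lambda^1_{1_x}=dt\circ\sigma_{1_x}=d(t\circ 1)_x=\mathrm{Id}$. But for $\lambda^0$ the ingredients you cite do not suffice. Unwinding \cref{quasi action of X on A} at a unit, one has $l_{1_x}(\alpha)=(di)_{1_x}(\alpha)$ and $\psi_{1_x}=\mathrm{Id}-d(1\circ s)_{1_x}$, so using $s\circ i=t$,
\[
\lambda^0_{1_x}(\alpha)=-\psi_{1_x}\bigl(l_{1_x}(\alpha)\bigr)=d(1\circ t)_{1_x}(\alpha)-(di)_{1_x}(\alpha).
\]
That this equals $\alpha$ is exactly the identity $d(1\circ t)_{1_x}(\alpha)-(di)_{1_x}(\alpha)-\alpha=0$ on $\ker(ds)_{1_x}$, which is not a formal consequence of $\sigma_{1_x}=(d1)_x$ and $t\circ 1=\mathrm{Id}$; the paper isolates it as \cref{lem:crucial lemma proving unitality of quasi action} and proves it using the isotropy Lie group $s^{-1}(x)\cap t^{-1}(x)$. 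This is precisely the point the authors flag in the remark following the proposition (unitality is not proved in \cite{cran} and requires this lemma), so your proof is incomplete without it. Your treatment of $K^{bas}_\sigma(1,h)=K^{bas}_\sigma(g,1)=0$ via the splitting relation \cref{eqn:relation between three definition of connections on lie groupoid} is essentially sound, though it tacitly uses that $dm$ applied to $\bigl((1_{t(h)},(d1)(w)),(h,\sigma_h(v))\bigr)$ returns $\sigma_h(v)$, which deserves a line of justification.
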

\begin{rem}

In \cite{cran}, the proof of unitality of the action above is not provided. One can see unitality as an outcome of  \cref{lem:crucial lemma proving unitality of quasi action}.
\end{rem}
We deduce the following consequence of the unitality here, which comes in quite handy in a later section.  
\begin{prop} \label{prop:measure of failure of g and unit of s(g)}
With notations as above, for any $\alpha \in A_x$ and $v \in T_xX_0$, we have 
\begin{equation}
        \begin{split}
            \sigma_{g}(dt)_{1_{s(g)}}(\alpha)={} & \Bigl(g1_{s(g)}, \sigma_g(\lambda_{1_{s(g)}}^1(dt)_{1_{s(g)}}(\alpha)\bigr).\bigl(\sigma_{1_{s(g)}}dt_{1_{s(g)}}(\alpha)\bigr)\Bigr)\\
            =& \Bigl(\bigl(g,\sigma_g(dt)_{1_{s(g)}}(\alpha). (d(1 \circ t))_{1_{s(g)}}(\alpha)\bigr)\Bigr).
        \end{split}
    \end{equation}
\end{prop}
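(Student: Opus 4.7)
The strategy is to specialise the defining identity \cref{eqn:measure of failure of closedness of connection} of the basic curvature to the composable pair $(g,1_{s(g)})$ and to the tangent vector $v=(dt)_{1_{s(g)}}(\alpha)\in T_{s(g)}X_0$, and then to invoke the unitality of the representation up to homotopy built in \cref{prop:adjoint representation}. Since $g\,1_{s(g)}=g$ and $t(g\,1_{s(g)})=t(g)$, \cref{eqn:measure of failure of closedness of connection} becomes
\[
\sigma_{g}(v) - \Bigl(g,\, \sigma_g\bigl(\lambda_{1_{s(g)}}^{1}(v)\bigr).\sigma_{1_{s(g)}}(v)\Bigr) \;=\; (dR_{g})_{1_{t(g)}}\bigl(K_{\sigma}^{bas}(g, 1_{s(g)})(v)\bigr).
\]

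By \cref{prop:adjoint representation} the triple $(R_0,R_1,R_2)=(\rho,\lambda,K_{\sigma}^{bas})$ is a \emph{unital} representation up to homotopy of $\mathbb{X}$ on $A\oplus TX_0$. Through the correspondence of \cref{characterization of representation upto homotopy}, unitality forces $R_{k}(g_1,\dots ,g_k)=0$ whenever any entry is a unit arrow; in particular $K_{\sigma}^{bas}(g,1_{s(g)})=0$. The right-hand side of the display above therefore vanishes, which, after substituting $v=(dt)_{1_{s(g)}}(\alpha)$, is exactly the first claimed equality.

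To pass from the first line to the second I would use two further immediate consequences. Unitality also gives $\lambda^{1}_{1_{s(g)}}=\mathrm{Id}_{T_{s(g)}X_0}$, so the first slot inside the parenthesis collapses to $\sigma_{g}\bigl((dt)_{1_{s(g)}}(\alpha)\bigr)$. Moreover the defining property $\sigma_{1_x}=(d1)_x$ of an Ehresmann connection, combined with the chain rule, yields $\sigma_{1_{s(g)}}\bigl((dt)_{1_{s(g)}}(\alpha)\bigr)=(d1)_{s(g)}(dt)_{1_{s(g)}}(\alpha)=d(1\circ t)_{1_{s(g)}}(\alpha)$. Substitution then finishes the second equality. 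The only genuinely non-mechanical ingredient is the vanishing of $K_{\sigma}^{bas}$ on unit arrows; this is precisely what unitality of a representation up to homotopy delivers (the point flagged in the remark immediately preceding the statement), and once granted, the proposition reduces to direct substitution.
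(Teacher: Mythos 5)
Your argument is correct and follows the route the paper intends: the text introduces this proposition as ``a consequence of the unitality'' of the representation up to homotopy of \cref{prop:adjoint representation}, and your specialisation of \cref{eqn:measure of failure of closedness of connection} to the pair $(g,1_{s(g)})$, the vanishing of $K_\sigma^{bas}(g,1_{s(g)})$, and the identities $\lambda^1_{1_x}=\mathrm{Id}$ and $\sigma_{1_x}=(d1)_x$ are exactly the ingredients needed. One small imprecision: the blanket claim that unitality forces $R_k(g_1,\dots,g_k)=0$ whenever some entry is a unit is false for $k=1$ (there unitality gives $R_1(1_x)=\mathrm{Id}$, not $0$); since you only invoke the vanishing for $R_2=K_\sigma^{bas}$ and treat $R_1$ separately, the proof stands, but the statement should be restricted to $k\geq 2$. (Alternatively, $K_\sigma^{bas}(g,1_{s(g)})(v)=-\psi_g\bigl(\sigma_g(v).(d1)_{s(g)}(v)\bigr)=-\psi_g(\sigma_g(v))=0$ follows directly by differentiating $m(g,1_{s(g)})=g$ and using $\mathbb{H}=\ker\psi$, which avoids appealing to unitality altogether.)
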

\begin{proof}
Straightforward. 
\end{proof}
Lie groupoid connections naturally appear in many different contexts. Here, we give examples of a few.  We refer to  \cite{BEHREND2005583, cran, MR4592876} for other examples.
\begin{example}\label{example:Connection on an etale groupoid}
Since the source map of an \'etale groupoid $\mbbX$ is a local diffeomorphism, it has a unique connection given by $H_g=0$ for each $g \in X_1$.  In particular, the Lie groupoid $\left[M \rightrightarrows M\right]$ (see \cref{example:manifold as groupoid}) has only the trivial connection.
\end{example}
\begin{example}\label{example:Connection on pair groupoid}
    Let $[M \times M \rightrightarrows M]$ be the pair groupoid as seen in \cref{example:pair groupoid}.  For this groupoid,  ${\text{ker}} \, ds$ is  the submanifold $\{\bigl((m_2,v),(m_1,0)\bigr) \mid m_2,m_1 \in M, \, v \in T_{m_2}M\} \subset TM \times TM.$ Thus, the distribution $\mathbb{H}$ on $M \times M$ defined by $H_{(m_2,m_1)}={0} \times T_{m_1}M$ is a connection on this pair groupoid.  Furthermore, the corresponding left splitting $\sigma$ of \cref{sequence diagram:short exact sequence with left translations} is given as: $\sigma_{(m_2,m_1)}:(\text{pr}_1)^*(TM) \rightarrow TM \times TM$, $\sigma(m_2,m_1,m_1,u)=(m_2,0,m_1,u).$ Notice that the corresponding quasi representation $\lambda$ of $\mathbb{H}$ given by $\lambda_{(m_2,m_1)}:T_{s(m_2,m_1)}M \rightarrow T_{t(m_2,m_1)}M$ is trivial.  For,
    \begin{align}\nonumber
    \lambda_{(m_2,m_1)}(u)={} & (dt)_{(m_2,m_1)}\sigma_{(m_1,m_2)}(u) \notag \\
                          ={} & (d\text{pr}_1)_{(m_2,m_1)}\bigl((m_1,0),(m_2,u)\bigr)=0. \notag \\
    \end{align}
    Moreover, this is a Cartan connection.
    \end{example}
\begin{example} \label{example:connection on action groupoid}
For the action groupoid $[P \times G \rightrightarrows P]$ of \cref{example:action groupoid} ${\text{ker}} \, ds$ is given by the subbundle $\bigl\{(0, v) \mid v \in T_g G\bigr\} \subseteq TP \times TG.$ The smooth distribution $TP \times {0}$ is a Cartan connection on $[P \times G \rightrightarrows P].$
\end{example}
\begin{example}\label{connection on vector bundle seen as a groupoid}
Consider the Lie groupoid $[E \rightrightarrows M]$ associated to a smooth vector bundle  $p\colon E \rightarrow M$ as seen in \cref{example:vector bundle as lie groupoid}.  Then a connection $\mathbb{H}\subset TE$  on the vector bundle $p\colon E \rightarrow M$ splits $TE=\mathbb{H} \oplus {\text{ker}} \, dp,$ and thus defines a Cartan connection on the Lie groupoid 
$[E \rightrightarrows M].$
\end{example}
\begin{example}
    Given a connection $\mathcal{H}\subset TP$   on a principal Lie group $G$-bundle $\pi:P \rightarrow M,$
 \begin{align}\label{eqn:connection on atiyah groupoid}
        \Bar{\mathbb{H}}_{[p,q]} = {} & \dfrac{{\mathcal H}_p \oplus T_qP}{T_{p,q}{\mathcal O}_{(p,q)}} \subset T_{[p,q]}\dfrac{P \times P}{G},
    \end{align}    
where ${\mathcal O}_{(p, q)}=(p,q).G$ is the orbit of $(p,q) \in P \times P$ under the diagonal action $(p, q)\cdot g=(pg, qg),$ defines a Cartan connection on 
the Atiyah groupoid $[\frac{P \times P}{G} \rightrightarrows M]$ (\cref{{example:Atiyah groupoid action}}). 
\end{example}


\section{Diffeological spaces}\label{section:Diffeological spaces}
   Although smooth manifolds and smooth maps have a rich history and classical status, they are often unsuitable in certain circumstances. For instance, the fibre product of two smooth manifolds fails to be a smooth manifold, and likewise, for the orbit space of the action of a Lie group on a smooth manifold. However, it is sometimes necessary to have some notion of ``smoothness'' on such spaces. The ``Diffelogical spaces provide the required framework''. One can consult \cite{iglesias2013diffeology} for a more detailed exposition. This paper will only focus on the absolute necessity and leave out the technical details. 
\begin{defn}\label{defn:diffeological space}
    A {\textit {diffeological space}} $(X, D_X)$ is a set $X$ with  a collection of maps $\{U \rightarrow X\}:=D_{X}$, where each $U$ is an open subset of $\mathbb{R}^n$ for some $n$ satisfying the following three axioms:
    \begin{enumerate}
        \item (covering) Every constant map is in $D_X$.
        \item (smooth compatability) If $p:U \rightarrow X$ is in $D_X$ and $f:V \rightarrow U$ is a smooth map, where $V \subseteq \mathbb{R}^m$ is open, then $p \circ f\colon V \rightarrow X$ is also in $D_X$.
        \item (local) If $U=\bigcup\limits_{i \in I}$ is an open cover for $U$ and $p:U \rightarrow X$ is s map such that $p\restriction_U:U_i \rightarrow X$ is in $D_X$, then $p$ lies in $D_X$.
    \end{enumerate}
  The elements of $D_X$ are called \textit{plots}.
A map $f\colon X \rightarrow Y$ is called \textit{a smooth map between diffeological spaces} $(X, D_X)\rightarrow (Y, D_Y)$ if 
for any plot $p: U \rightarrow X$, the composite map $f \circ p: U \rightarrow Y$ is in $D_Y$.  We denote the category of diffeological spaces and smooth maps by  ${\mathfrak  {Diffeo}}$.
\end{defn}

\begin{example}\label{example:manifold as a diffeological space}
    Any $n$-dimensional smooth manifold $M$ is a diffeological space with a plot given by a usual smooth map $U\to M,$ for an open subset $U\subset {\mathbb R}^m.$ Also, a smooth map between two such diffeological manifolds is the same as a smooth map between smooth manifolds.  Thus, the category of smooth manifolds embeds into the category ${\mathfrak  {Diffeo}}$ in a natural way: $\mathfrak{Man}\subset \mathfrak{Diffeo}.$
\end{example}
\begin{example}[Pullback diffeology]\label{example:pullback diffeology}
    Suppose $f: Y \rightarrow X$ is a set map and $D_X$  a diffeology on $X$.  Then, one can pullback the diffeology $D_X$ to give a diffeological structure for $X$.  A map $p: U \rightarrow Y$ is a plot if and only if the composite map $f \circ p: U \rightarrow X$ lies in $D_X$.  This gives a diffeological structure on $Y$, called {\textit pullback diffeology} and denoted as $f^*D_X$.  Note that $f\colon (Y,f^*D_X) \rightarrow (X, D_X)$ is a smooth map. 
\end{example}
\begin{example}[Subspace diffeology]\label{example:subspace diffeology}
 A subset $Y\subset X$ of a diffeological space  $(X, D_X)$  inherits a natural diffeological structure by the pullback diffeology along the inclusion map $i: Y \rightarrow X;$ that is, the plots of subspace diffeology are $f: U \rightarrow X \in D_X$ such that $f(U) \subseteq Y$. 
\end{example}
\begin{example}[Product diffeology]\label{example:product diffeology}
   Let $(X,D_X)$ and $(Y,D_Y)$ be two diffeological spaces.  Then, the product set $X \times Y$ has a natural diffeology on it given by the product of diffeologies $D_X \times D_Y=\{(f,g) \mid f \in D_X \, \text{and} \, g \in D_Y\}$. 
\end{example}
\begin{example}[Fibre product diffeology]\label{example:fibre product diffeology}
    One of the biggest advantages of working with the category of diffeological spaces and smooth maps is that ${\mathfrak  {Diffeo}}$  admits fibre products.   Suppose $f:(X, D_X) \rightarrow (Z, D_Z)$ and $g:(Y, D_Y) \rightarrow (Z, D_Z)$ are smooth maps between diffeological spaces, then the fibre product set $X \times_{Z} Y$ has a natural diffeology inherited as a subspace diffeology of the product diffeology. Explicitly,  $X \times_{Z} Y$ is a diffeological space with the diffeology $D_X \times_{D_Z} D_Y=\{(f_1,g_1) \mid f \circ f_1=g \circ g_1, f_1 \in D_X,g_1 \in D_Y\}.$
\end{example}
\begin{example}[Pushforward diffelogy]\label{example:pushforward diffeology}
    Let $(X, D_X)$ be a diffeological space and $f:X \rightarrow X'$ be a set map.  There is a natural way of obtaining a diffeological structure on $X'$ in the following manner.  A map $p: U \rightarrow X'$ is a plot if and only if, for every $u \in U$, there exists an open neighbourhood $V$ of $u$ and a plot $q: V \rightarrow X$ in $D_X$ such that $p\restriction_V=f \circ q$.  Typically, the pushforward diffeology along a map $f:X \rightarrow X'$ is denoted by $f_*(D_X)$.
\end{example}
\begin{example}[Quotient diffeology]\label{example:quotient diffeology}
    Let $(X, D_X)$ be a diffeological space and $\sim$ be an equivalence relation on $X$.  Then pushforward along the canonical quotient map $q_X\colon X \rightarrow \dfrac{X}{\sim}$ induces a diffeology on  $\dfrac{X}{\sim},$ called the \textit{quotient diffeology}.   Hence, a map $f:U \rightarrow \dfrac{X}{\sim}$ is a plot if and only if  for every $u \in U$, there exists an open neighbourhood $V$ of $u$ and a plot $g:V \rightarrow X$ such that $f \restriction_V=q_X \circ g.$
  
  \end{example}

    A smooth, surjective  map $f:(X,D_X) \rightarrow (Y,D_Y)$ is called a \textit{subduction} if 
    the diffeology $D_Y$ coincides with the pushforward diffeology $f_{*}(D_X)$.
    In particular, the canonical quotient map $q_X:X \rightarrow \dfrac{X}{\sim}$ in \cref{example:quotient diffeology} is a subduction.
\begin{defn}[Definition 8.3 in \cite{iglesias2013diffeology}]\label{defn:diffeological groupoid}
    A {\textit {diffeological groupoid}} is a groupoid $X_1 \rightrightarrows X_0$, internal to the category of diffeological spaces.
\end{defn}
Explicitly, the sets $X_1, X_0$ are diffeological spaces along with the structure maps $s, t, m, I, u$ all smooth in a diffeological groupoid $\mbbX$.  Recently, the abelianization of Lie and diffeological groupoids has been investigated in \cite{MR4828414}. 
\begin{rem}\label{rem:s,t of diffeological groupoid are subductions}
    Observe that a diffeological groupoid's source and target maps automatically become subductions.
\end{rem}
Generalization of the examples of Lie groupoids given between \cref{example:manifold as groupoid}--\cref{example:action groupoid}
to diffeological groupoids is straightforward. 
\begin{defn}[Definition 4.5 in \cite{MR3467758}]\label{defn:definition of diffelogical vector space}
    Let $(X, D_X)$ be a diffeological space.  A diffeological vector space over $(X,D_X)$ is a diffeological space $(V,D_V)$ along with a smooth map $\pi:(V,D_V) \rightarrow (X,D_X)$ satisfying following conditions:
    \begin{itemize}
        \item $\pi^{-1}(x)$ is a vector space for each $x \in X$.
        \item The vector space addition map $+:(V \times_{X} V,D_{V \times_{X} V}) \rightarrow (V,D_V)$ is a smooth map.
        \item scalar multiplication map $*:(R \times V) \rightarrow V$ is a smooth map. 
        \item zero section of $\pi:V \rightarrow X$ is a smooth map.
    \end{itemize}
\end{defn}
 The notion of diffeological vector space over a diffeological space has been studied in \cite{MR3467758}, and the same object with the name \textit{diffeological vector pseudo bundle} has appeared in \cite{PERVOVA2016269}. Any ordinary smooth vector bundle is an example of a diffeological vector space, and some other nontrivial examples can be found in  \cite{PERVOVA2016269}. Also, in \cref{section:Atiyah sequence}, we will see that associated with a Lie groupoid principal bundle is a sequence of diffeological groupoids with vector space structure.


\section{Atiyah sequence of a principal Lie groupoid bundle}\label{section:Atiyah sequence}
    In  \cref{section:Principal Lie Groupoid Bundle over manifolds}, we have defined a principal Lie groupoid bundle. This paper's primary goal would be to introduce a connection structure on a Lie groupoid bundle. Another of our objectives is to investigate the relation between our connection on a Lie groupoid bundle with a particular kind of sequence, analogous to the so-called \textit{Atiyah sequence} on a classical principal bundle. 
    
     For that, we briefly go through the setup of classical principal bundles. 
Let $G$ be a Lie group and $\pi\colon P \rightarrow M$ a principal $G$-bundle.  For each $p \in P$, there exists a diffeomorphism $\delta_p:  G   \rightarrow  \, \pi^{-1}(\pi(p)),
   g  \mapsto pg,$
and the differential of that map at the identity $e;$ $d({\delta_p})_e\colon L(G)\to T_p P$ defines a map of vector bundles $d\delta\colon P\times L(G)\to TP, (p, X)\mapsto (p, d(\delta_p)_e(X)),$ called \textit{fundamental vector field map}. In turn, we have a natural short exact sequence of vector bundles over $P$:
  $
 \begin{tikzcd}
   0 \arrow[r] & P \times L(G) \arrow[r,"d\delta"] & TP \arrow[r,"d\pi"] & \pi^*TM \arrow[r] & 0,   
  \end{tikzcd}.
 $
Quotient by the natural $G$ actions on each of these bundles reduces the above short exact sequence of vector bundles 
 to a short exact sequence of bundles over $M$ \cite{MR86359}.
\begin{equation}\label{sequence diagram:Atiyah sequence in classical setup}
  \begin{tikzcd}
  0 \arrow[r] &  {{\rm Ad}(P)} \arrow[r,"\Bar{d\delta}"]  & {{\rm At}(P)} \arrow[r,"\Bar{d\pi}"] & TM \arrow[r] & 0, 
  \end{tikzcd}
  \end{equation}
where $\frac{P\times L(G)}{G}:={\rm Ad}(P),$  $\frac{TP}{G}:={\rm At}(P)$ are  called the \textit{adjoint bundle} and the \textit{Atiyah bundle} repectively, while the sequence \cref{sequence diagram:Atiyah sequence in classical setup} is called the \textit{Atiyah sequence.} Interested reader can look at \cite{mackenzie1987lie} for detailed proof and construction. A \textit{connection on the principal $G$-bundle}  is a splitting of the Atiyah sequence.  One can show this description of a connection is equivalent to the following more ``popular'' definitions: 
\begin{enumerate}
        \item A smooth subbundle $\mathcal{H}$ of $TP$ such that $TP=\mathcal{H} \oplus {\text{ker}} \, d\pi$ and $\mathcal{H}_{pg}=\mathcal{H}_p.g$ for every $p \in P$ and $g \in G,$ where $\mathcal{H}_p.g$ denotes the restriction of induced action of $G$ on $TP$ to $\mathcal{H}$.\label{defn:connection as subbundle in classical setup}
        \item An $L(G)$-valued $1$-form $\omega$ on $P$ satisfying; \label{defn:connection as 1-form in classical setup}
        \begin{itemize}
            \item $\omega_p((\delta_p)_e(X))=X$, for every $X \in L(G)$ and $p\in P.$
            \item $(R_g)^*\omega={\rm Ad}_{g^{-1}}\omega$, for every $g \in G,$ for every $g\in G.$\
        \end{itemize}
    \end{enumerate}
The subbundle $\mathcal{H}$ is usually called a \textit{horizontal distribution}, and the vectors lying inside $\mathcal{H}$ are called  \textit{horizontal vectors}.

Now, focusing back on our object of interest, we consider a principal $\mathbb{X}$-bundle $\pi\colon P \rightarrow M$ for a Lie groupoid $\mbbX.$  That is, we have an anchor map $a\colon P \rightarrow X_0$ and an action map $\mu\colon P \times_{X_0} X_1 \rightarrow P$ satisfying the conditions in \cref{defn:torsor}. Let $A$ be the associated Lie algebroid of $\mbbX$ defined in \cref{example:associated Lie algebroid}.
For each $p \in P$, let $\delta_p\colon s^{-1}(a(p)) \rightarrow \pi^{-1}(\pi(p))$ be the diffeomorphism $\delta_p(h)=ph^{-1}$.  Differential at the unit arrow $1_{a(p)}$ produces an isomorphism  $$T_{1_{a(p)}}s^{-1}(a(p))=A_{a(p)} \xrightarrow[]{(d\delta_p)_{1_{a(p)}}} T_p\pi^{-1}(\pi(p))={\text{ker}} (d\pi)_p.$$   Now varying $p$ over $P$, we get the \textit{fundamental vector field map} $d\delta\colon P \times_{X_0} A \rightarrow TP$ given by 
    \begin{equation}\label{eqn:fundamental vector field map}
    \begin{split}
        d\delta(p,\alpha)={} & \bigl(p,(d\delta_p)_{1_{a(p)}}(\alpha)\bigr).
    \end{split}
    \end{equation}
As a consequence, we obtain the following short exact sequence of graded vector bundles over $P.$
\begin{equation}\label{sequence diagram:short exact sequence with fundamental vector field}
\begin{tikzcd}
    0 \arrow[r] & P \times_{X_0} A \oplus a^*TX_0 \arrow[r, "{(d\delta,{\rm Id}})"] & TP\oplus a^*(TX_0) \arrow[r, "d\pi"] & \pi^*TM  \arrow[r] & 0
\end{tikzcd}
\end{equation}
We aim to turn this sequence in \cref{sequence diagram:short exact sequence with fundamental vector field} of graded vector bundles over $P$ to a sequence over $M,$ which involves quotienting with respect to an action of $\mbbX$ on each fibre space. And precisely here, things get interesting! 

The first point to take notice of here is that $\mbbX$ does not have a natural (adjoint) action on its Lie algebroid $A$ and, as discussed in \cref{properties of Ehresmann connection} and \cref{prop:adjoint representation}, one can only define the adjoint representation (a quasi action) as a representation up to homotopy by introducing a Lie groupoid connection $\mathbb{H}$ on $\mbbX.$ Moreover, notice that for a given $g\in X_1,$ the right action $R_g$ is defined on $s^{-1}(a(p)),$ not on the entire $P,$ thus the action of $\mbbX$ on $TP$ is also not obvious. Curiously, a connection $\mathbb{H}$ on $\mbbX$ resolves both the issues and produces a sequence of (diffeological) groupoids over $M$ with graded vector space structure on their fibres. The rest of this section will be entirely devoted to this task. To give a glimpse  of  the main result of this section, we  note that eventually, we shall arrive at the following exact sequence, 
 \begin{equation}
 \begin{tikzcd}\label{sequence diagram:atiyah sequence of categories}
     0 \arrow[r]  & {{\mathcal A\mathfrak d}(P)} \arrow[d] \arrow[r, "\Bar{d\delta}"] & {{\mathcal A}{\mathfrak {t}}(P)} \arrow[d] \arrow[r,"\Bar{d\pi}"] & {\mathcal A\mathfrak {ct}}(\pi^*TM) \arrow[r] \arrow[d] & 0.\\
     0 \arrow[r] & M \arrow[r] & M \arrow[r] & M \arrow[r] & 0,
 \end{tikzcd}    
 \end{equation}
 where $M$ is treated as a discrete category, and $\AdP,$ $\AtP$ and $\ActTM$ are diffeological groupoids over $M$ with graded vector space structure on their fibres, and arrows are smooth functors.

Before proceeding further, we state a useful identity involving the fundamental vector field map \cref{eqn:fundamental vector field map}.  Differential at $1_{a(p)}$ implies
\begin{align}\label{eqn:description of fundamental vector field map}
    (d\delta_p)_{1_{a(p)}}(\alpha) ={} & d\mu_{(p, 1_{a(p)})}[(di)_{1_{a(p)}}\alpha] \notag   \\
     ={} & \bigl(p1_{a(p)},0.(di)_{1_{a(p)}}(\alpha)\bigr), \, \text{for every} \, \alpha \in A_{a(p)}, 
\end{align}
where $i$ is the inverse map, and in the last equality, we have followed the notation in \cref{defn:right action of lie groupoid}.

We begin with the following definition to provide a formal description of \cref{sequence diagram:atiyah sequence of categories}.

\begin{defn} (Equivariant $\mathbb{X}$ vector bundle)\label{defn:quasi equivariant X-bundle over a X-space}
    Let $P$ be a manifold equipped with a right action of $\mbbX$ given by an anchor map $a\colon P \rightarrow X_0$ and an action map $\mu\colon P \times_{X_0} X_1 \rightarrow P$.  A \textit{quasi equivariant $\mathbb{X}$-(vector) bundle over $P$} is a vector bundle $\pi\colon E \rightarrow P$ equipped with a right quasi action $\tau\colon E \times_{X_0} X_1 \rightarrow E$ via the anchor map $a \circ \pi:E \to P$ such that, 
    \begin{enumerate}
        \item $\tau_g:E_p \rightarrow E_{\mu(p, g)}$ is a linear map for every $(p,g)$ satisfying $a(p)=t(g),$
        \item  $\tau_{1_{a(p)}}={\rm Id}_{E_p}$, for every $p \in P$.
    \end{enumerate}

     A \textit{morphism} between a pair of quasi $\mbbX$ vector bundles  $\pi\colon E \rightarrow P$ and  $\pi':E' \rightarrow P$ is a map of vector bundles $\phi\colon E\to E'$ such that the following diagram is commutative for each $(p,g)$ satisfying $a(p)=t(g):$
 \begin{equation}
     \begin{tikzcd}
         E_p \arrow[r,"\Phi_p"] \arrow[d,"\tau_g"] & E'_p \arrow[d,"\tau'_g"] \\
         E_{pg} \arrow[r, "\Phi_{pg}"] & E'_{pg}
     \end{tikzcd}
 \end{equation}   
\end{defn}

A quasi equivariant $\mathbb{X}$-bundle will be  represented by the  diagram,
    \begin{tikzcd}
        E \arrow[d, "\pi"] & X_1 \arrow[d, shift left] \arrow[d, shift right] \\
        P \arrow[r,"a"] & X_0.
    \end{tikzcd}
Let us see some examples of such bundles.
\begin{example}\label{example:Associated Lie algebroid as quasi equivariant X-bundle over X_0}
   In \cref{quasi action of X on A} we saw that an Ehresmann connection on $\mbbX$ induces a left quasi action of $\mbbX$ on its associated Lie algebroid $A$ Which can be turned into a right quasi action by inverting the arrows; that is, $\tau_{g}(\alpha)=\lambda_{g^{-1}}(\alpha)$, for every $\alpha \in A_{t(g)}.$
   Whereas by \cref{example:Action of groupoid on its object space} the space $X_0$ has a  natural right action $X_0 \times_{X_0} X_1 \rightarrow X_0,\,  (x,g)\mapsto s(g)$ of $\mathbb{X}$ for the anchor map ${\rm id}_{X_0}\colon X_0 \rightarrow X_0.$  Thus, it is not hard to see that $A$ is a quasi equivariant $\mathbb{X}$-bundle over $X_0$ with right quasi action 
\end{example}

Our first aim is to turn the short exact sequence in \cref{sequence diagram:short exact sequence with fundamental vector field} into a short exact sequence of quasi equivariant $\mathbb{X}$-bundles upto homotopy which we introduce now.
\begin{defn}(Equivariant bundle upto homotopy)\label{defn: equivariant bundle upto homotopy}
   Let $P$ be a manifold equipped with a right action of $\mbbX$ given by an anchor map $a:P \rightarrow X_0$ and an action map $\mu:P \times_{X_0} X_1 \rightarrow P.$  A \textit{(degree $2$) quasi equivariant $\mathbb{X}$-bundle upto homotopy} is a $2$-degree graded vector bundle $\pi:E=E_0 \oplus E_1 \rightarrow P,$ where $E_0\to P, E_1\to P$ both are equivariant bundles with respective  quasi actions $\tau^0:E_0 \times_{X_0} X_1 \rightarrow E_0$ and $\tau^1:E_1 \times_{X_0} X_1 \rightarrow E^1,$  along with: 
   \begin{enumerate}
       \item a smooth vector bundle map $\xi:E_0 \rightarrow E_1,$
       \item a smooth map $\Upsilon:E^1 \times_{X_0} (X_1 \times_{X_0} X_1) \rightarrow E^0 \times_{X_0} X_1$(called \textit{homotopy map}), such that for a fixed $(g, h)\in X_1\times_{X0}X_1,$ restriction of $\Upsilon$ to  each fiber $E^1_p$ gives a linear map $\Upsilon(g,h)\colon E^1_p \rightarrow E^0_{pgh}.$      
   \end{enumerate}
Further the following compatibility conditions have to hold for appropriate  $p \in P$ and $g,h,k \in X_1.$
\[
\begin{tikzpicture}
    \matrix(a)[matrix of math nodes,
row sep=6em, column sep=4em,
text height=1.5ex, text depth=0.25ex]
{ (E^0)_p & (E^1)_{p}  \\
   (E^0)_{pgh} & (E^1)_{pgh}  \\};
   \path[->](a-1-1) edge node [font=\tiny,above]{$(\xi)_p$} (a-1-2)
            (a-2-1) edge node[font=\tiny, below]{$(\xi)_{pgh}$} (a-2-2);
         \path[->, transform canvas={xshift=-3mm}]
          (a-1-1) edge node[font=\tiny, left]{$\tau^0_{gh}$} (a-2-1)
          (a-1-2) edge node[font=\tiny, left]{$\tau^1_{gh}$} (a-2-2);
          \path[->,transform canvas={xshift=-1mm}]
          (a-1-1) edge node[font=\tiny, right]{$\tau^0_h\tau^0_g$} (a-2-1)
          (a-1-2) edge node[font=\tiny, right]{$\tau^1_h\tau^1_g$} (a-2-2);
          \path[->,dashed]
          (a-1-2) edge node[font=\tiny,fill=white]{$\Upsilon(g,h)$} (a-2-1);
\end{tikzpicture}
\]
    \begin{equation}\label{eqn: comptability conditions of equivariant bundle upto homotopy}
    \begin{split}
         &\xi_{pg} \circ \tau_g^0={} \tau_g^1 \circ \xi_p, \\
        &\tau_{gh}^0-\tau_h^0 \circ \tau_g^0 ={}  \Upsilon(g,h) \circ \xi_p, \\
         &\tau_{gh}^1-\tau_h^1 \circ \tau_g^1={}  \xi_{pgh} \circ \Upsilon(g,h), \\
        &\tau_k^0 \circ \Upsilon(g,h)-\Upsilon(gh,k)+\Upsilon(g,hk)-\Upsilon(k,h) \circ \tau_g^1={}  0.
\end{split}
\end{equation}
\end{defn}

A \textit{morphism} between two such quasi equivariant $\mathbb{X}$-bundles upto homotopy $\bigl(E=E_0 \oplus E_1,\xi,(\tau^0,\tau^1),\Upsilon\bigr)$ and $\bigl(F=F_0 \oplus F_1,\xi',(\tau'^0,\tau'^1),\Upsilon'\bigr)$ consists of two vector bundle maps $\Phi_0:E_0 \rightarrow F_0, \, \Phi_1:E_1 \rightarrow F_1$ such that the following conditons hold for appropriate  $p \in P$ and $g,h \in X_1.$
\begin{equation}
    \begin{split}
        & \xi' \circ \Phi_0={}  \Phi_1 \circ \xi, \\
       & (\tau'_g)^0 \circ (\Phi_0)_p={} (\Phi_0)_{pg} \circ \tau_g^0, \, \hspace{0.5cm} (\tau'_g)^1 \circ (\Phi_1)_p ={} (\Phi_1)_{pg} \circ \tau_g^1, \\
       & \Upsilon'(g,h) \circ (\Phi_1)_p={}    (\Phi_0)_{pgh} \circ \Upsilon(g,h).
     \end{split}
\end{equation}
Denote the category of such bundles over $P$  by $\mathcal {Q-EQUIV}(P, \mathbb{X}).$
\begin{rem}
    Every quasi equivariant $\mathbb{X}$-bundle $\pi:E \rightarrow P$ is trivially a quasi equivariant $\mathbb{X}$-bundle upto homotopy.  It is a straightforward generalization to define $n$-degree quasi equivariant $\mathbb{X}$-bundles upto homotopy.  For our purpose, $2$-degree suffices.
\end{rem}
\begin{prop}\label{prop: fibred category related to quasi equivariant bundle}
A quasi-equivariant $\mbbX$-bundle upto a homotopy $\bigl(\pi:E=E^0\oplus E^1 \rightarrow P, \xi, \tau, \Upsilon\bigr)$ defines a fibered category $F$ over $\mathbb{X}.$ The fiber $F(x)$  over  each $x\in X_0$ is a diffeological groupoid.

\end{prop}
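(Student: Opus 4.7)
The plan is to realise $F$ concretely in terms of the bundle data. Take as objects of $F$ all pairs $(p,e^1)$ with $p\in P$ and $e^1\in E^1_p$; the structural functor $\pi_F\colon F\to\mathbb{X}$ sends $(p,e^1)$ to $a(p)\in X_0$. A morphism in $F$ over $g\in X_1$ is declared to be a triple $(g,\alpha)\colon (p,e^1)\to (pg,f^1)$ where $a(p)=t(g)$, $\alpha\in E^0_{pg}$, and
\[
f^1 \;=\; \tau^1_g(e^1)+\xi_{pg}(\alpha).
\]
The identity on $(p,e^1)$ is $(1_{a(p)},0)$, which is admissible by the unitality condition $\tau^1_{1_{a(p)}}={\rm Id}_{E^1_p}$.

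The key step is to define composition so that the homotopy $\Upsilon$ is absorbed into the bookkeeping. For composable morphisms $(g,\alpha)\colon (p,e^1)\to (pg,f^1)$ and $(h,\beta)\colon (pg,f^1)\to (pgh,k^1)$, a direct computation using $\xi\circ\tau^0_h=\tau^1_h\circ\xi$ together with the homotopy relation $\tau^1_h\circ\tau^1_g=\tau^1_{gh}-\xi\circ\Upsilon(g,h)$ yields
\[
k^1 \;=\; \tau^1_{gh}(e^1)+\xi_{pgh}\bigl(\tau^0_h(\alpha)+\beta-\Upsilon(g,h)(e^1)\bigr),
\]
which forces the definition
\[
(h,\beta)\circ(g,\alpha)\;:=\;\bigl(gh,\ \tau^0_h(\alpha)+\beta-\Upsilon(g,h)(e^1)\bigr).
\]
The main obstacle is then verifying associativity. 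Expanding the two triple composites $(k,\delta)\circ\bigl((h,\beta)\circ(g,\alpha)\bigr)$ and $\bigl((k,\delta)\circ(h,\beta)\bigr)\circ(g,\alpha)$ and simplifying with the aid of $\tau^0_{hk}-\tau^0_k\tau^0_h=\Upsilon(h,k)\circ\xi$, one finds that the difference of the resulting $E^0$-components vanishes precisely because of the cocycle identity contained in the last line of \cref{eqn: comptability conditions of equivariant bundle upto homotopy}. Since $\mathbb{X}$ is a groupoid, each $(g,\alpha)$ admits a unique two-sided inverse of the form $(g^{-1},\alpha')$, so $F$ is in fact a groupoid.

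For the fibred structure, for any $g\in X_1$ and any object $(p,e^1)$ with $a(p)=t(g)$, the morphism $(g,0)\colon(p,e^1)\to(pg,\tau^1_g(e^1))$ serves as a cartesian lift of $g$: any morphism landing at $(pg,\tau^1_g(e^1))$ factors uniquely through $(g,0)$, the factorisation being recovered by reading off the $E^0$-component. Consequently the fibre $F(x)$ over a fixed $x\in X_0$ is the subcategory whose objects are pairs $(p,e^1)$ with $a(p)=x$ and whose morphisms are pairs $(1_x,\alpha)\colon(p,e^1)\to(p,e^1+\xi_p(\alpha))$ indexed by $\alpha\in E^0_p$; this is visibly a groupoid, with inverse of $(1_x,\alpha)$ given by $(1_x,-\alpha)$.

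Finally, the object set of $F(x)$ is $E^1\!\restriction_{a^{-1}(x)}$, a subset of the smooth manifold $E^1$, and the morphism set is the restriction $(E^0\times_P E^1)\!\restriction_{a^{-1}(x)}$ of the fibre product $E^0\times_P E^1$; both acquire natural subspace diffeologies from their smooth ambient spaces (\cref{example:subspace diffeology}, \cref{example:fibre product diffeology}). Smoothness of source, target, unit, inverse and composition all reduce to smoothness of $\xi$, $\tau^0$, $\tau^1$ and $\Upsilon$ together with the functoriality of subspace and fibre product diffeologies, so $F(x)$ is a diffeological groupoid in the sense of \cref{defn:diffeological groupoid}.
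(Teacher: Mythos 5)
Your construction is genuinely different from the paper's: the paper builds, for each $x\in X_0$, a translation groupoid $F(x)$ whose objects are pairs $\bigl(v^0\oplus v^1,\gamma\bigr)$ with $v^0\oplus v^1$ in a fibre of $E^0\oplus E^1$ and $\gamma\in t^{-1}(x)$, assembles these into a pseudofunctor on $\mathbb{X}$ whose failure of functoriality is controlled by $\Upsilon$, and then invokes the Grothendieck construction. You instead build the total category directly, with objects drawn only from $E^1$ and morphisms indexed by $E^0$. That is an attractive idea, but as written it has a concrete gap, and it also does not reproduce the fibre $F(x)$ that the paper actually uses later (in \cref{prop: natural transformation between F and G coming from ddelta and omega} the functors $\eta_x,\Theta_x$ act on objects of the form $(\alpha\oplus v,\gamma)$, which your fibres do not contain).

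The gap is in associativity. Expanding your two triple composites, the $\alpha$- and $\beta$-dependent terms do cancel using $\tau^0_{hk}-\tau^0_k\tau^0_h=\Upsilon(h,k)\circ\xi$, but the residual $e^1$-dependent term is
\begin{equation*}
\bigl[-\tau^0_k\Upsilon(g,h)-\Upsilon(gh,k)+\Upsilon(g,hk)+\Upsilon(h,k)\tau^1_g\bigr](e^1),
\end{equation*}
whereas the last line of \cref{eqn: comptability conditions of equivariant bundle upto homotopy} reads $\tau^0_k\Upsilon(g,h)-\Upsilon(gh,k)+\Upsilon(g,hk)-\Upsilon(h,k)\tau^1_g=0$. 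Adding the two, the outer terms cancel but the middle ones double, leaving an associator equal to $2\bigl[\Upsilon(g,hk)-\Upsilon(gh,k)\bigr](e^1)$, which is not zero in general (e.g.\ for $\Upsilon(g,h)=K^{bas}_\sigma(h^{-1},g^{-1})$ the two terms are $K^{bas}_\sigma(k^{-1}h^{-1},g^{-1})$ and $K^{bas}_\sigma(k^{-1},h^{-1}g^{-1})$). Your composition would be associative precisely if the cocycle identity held with the middle two terms carrying the opposite signs, i.e.\ $\tau^0_k\Upsilon(g,h)+\Upsilon(gh,k)=\Upsilon(g,hk)+\Upsilon(h,k)\tau^1_g$ --- which is in fact the identity the paper's examples satisfy via \cref{eqn: compatability conditions of basic curvature} --- so you have implicitly used a version of the axiom different from the one stated; this discrepancy must be resolved explicitly rather than asserted away. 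Two smaller points: $(1_{a(p)},0)$ is a strict unit for your composition only if $\Upsilon(1_{a(p)},g)=0=\Upsilon(g,1_{a(p)})$, which is not among the listed axioms (it holds only because the representations involved are unital, a fact you should cite); and since morphisms between fixed objects over a fixed $g$ differ by arbitrary elements of $\ker\xi$, your claim that the factorisation through $(g,0)$ is ``recovered by reading off the $E^0$-component'' needs justification for $(g,0)$ to be cartesian.
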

\begin{proof}
    For each $x \in X_0$ consider a groupoid $F(x)$ whose objects are $\bigl((v^0\oplus v^1),\gamma)\bigr),$ where $v_0\oplus v_1 \in (a \circ \pi)^{-1}(x)$ and $\gamma \in t^{-1}(x),$ and an arrow is given as $(u^0 \oplus u^1, \gamma, v_0 \oplus v_1, \beta)$ with source and target $ (u^0 \oplus u^1, \gamma),$ and $\bigl((u^0+v^0) \oplus (u^1+v^1),\gamma\beta\bigr)$ respectively.  The composition is given as $\bigl((u^0+v^0)\oplus (u^1+v^1), \gamma\beta,w^0 \oplus w^1,\zeta\bigr) \circ (u^0 \oplus u^1,\gamma,v^0\oplus v^1,\beta)=\bigl(u^0\oplus u^1,\gamma,(w^0+v^0) \oplus (w^1+v^1), \beta\zeta\bigr).$ The category $F(x)$ has an obvious smooth diffeology.

An arrow $x \xrightarrow{g} y \in X_1$ defines a functor $F(g)\colon F(y) \rightarrow F(x)$ sending an object $\bigl(u^0\oplus u^1,\gamma\bigr)$ to $\Bigl(\tau_g^0(u^0)\oplus \tau_g^1(u^1)\bigr),g^{-1}\gamma\Bigr)$ and an arrow  $(u^0 \oplus u^1, \gamma, v^0 \oplus v^1,\beta) $ to $\bigl(\tau_g^0(u^0)\oplus \tau_g^1(u^1),g^{-1}\gamma,\tau_g^0(v^0)\oplus \tau_g^1(v^1),\beta\bigr).$
    The homotopy and compatability relations in \cref{eqn: comptability conditions of equivariant bundle upto homotopy} produce a natural transformation between $F(gh)$ and $F(h) \circ F(g)$ by sending an object $(v^0 \oplus v^1, \gamma)$ to an arrow $\bigl(\tau_{gh}^0(u^0) \oplus \tau_{gh}^1(u^1),h^{-1}g^{-1}\gamma,\bigl(-\Upsilon \circ \xi(u^0)\oplus -\xi \circ \Upsilon(u^1)\bigr),1_{s(\gamma)}\bigr).$  Thus, we have a pseudo functor on $\mathbb{X}$.  The construction of the corresponding fibred category is standard. 
\end{proof}
\begin{rem}\label{rem: relation between action of stacky groupoid and bundle upto homotopy}
    In \cite{MR4139032}, authors have formulated the notion of action of a stacky Lie groupoid $\mathcal{G} \rightrightarrows M$ on a fibered category $\chi$ (definition 3.13 in \cite{MR4139032}), by relaxing functoriality and unitality conditions of a stacky groupoid action by means of a $2$-isomorphism between fibered categories, and defined the ``stacky quotient''. Our notion of quasi equivariant bundles and the quotients described in the following sections rely on the linear homotopy between vector spaces.    However, as \cref{prop: fibred category related to quasi equivariant bundle}  indicates, the two descriptions are closely related.  
    
\end{rem}
Fix a connection $\mathbb{H}\subset TX_1$ on the Lie groupoid $\mbbX.$ We  show that $(\mathbb{X}, \mathbb{H})$ 
allows us to realize   \cref{sequence diagram:short exact sequence with fundamental vector field} as a sequence in the category  $\mathcal {Q-EQUIV}(P, \mathbb{X});$ That is each vector bundle in  \cref{sequence diagram:short exact sequence with fundamental vector field} is quasi equivariant $\mathbb{X}$-bundle upto homotopy and maps are  morphisms of quasi equivariant $\mathbb{X}$-bundles upto homotopy. The theory developed in \cref{subsection:Connection of a Lie groupoid}, more specifically the results for representation up to homotopy on the associated Lie algebroid expressed in \cref{properties of Ehresmann connection} and \cref{prop:adjoint representation} going to be pivotal for this section. 

Throughout this section, $\lambda^0, \, \lambda^1$ denote the respective quasi actions of $\mathbb{X}$ on its associated Lie algebroid $A$ and $TX_0$ (see \cref{quasi action of X on A}, \cref{quasi action of X on TX_0}).

\subsection{Construction of diffeological groupoid $\AdP$}\label{Subsection:Groupoid AdP}
\subsubsection {Making $P \times_{X_0} A \oplus a^*TX_0 \xrightarrow[]{{\rm pr}_1} P$ into a quasi equivariant$\mathbb{X}$-bundle upto homotopy}\label{subsubsection:Making P*A into a quasi equivariant X-bundle}

Define quasi actions of $\mathbb{X}$ on $P \times_{X_0} A$ and $a^*TX_0$ respectively by: 
\begin{equation}\label{eqn:Action on P times A}
\begin{split}
&\tau^0\colon \bigl(P \times_{X_0} A\bigr) \times_{X_0} X_1 \rightarrow P \times_{X_0} A\\
&\bigl((p,\alpha), g\bigr)\mapsto (pg,\lambda_{g^{-1}}^0(\alpha)).\\
&\tau^1 \colon a^*TX_0 \times_{X_0} X_1\to a^*TX_0\\
& (p,a(p),v,g)\mapsto \bigl((pg,a(pg),\lambda_{g^{-1}}^1(v)\bigr).
\end{split}
\end{equation}
For each $g \in X_1$, associated linear maps  $\tau_g^0\colon A_{a(p)} \rightarrow A_{a(pg)}, \, \tau_g^1:T_{a(p)}X_0 \rightarrow T_{a(pg)}X_0$ on the fibres are given by $\tau_g^0(\alpha)=\lambda_{g^{-1}}^0(\alpha)=-(\psi)_{g^{-1}}l_{g^{-1}}(\alpha)$, and $\tau_g^1(v)=\lambda_{g^{-1}}^1(v)=(dt)_{g^{-1}}\sigma_{g^{-1}}(v)$ where $a(p)=t(g).$
Define $\xi:P \times_{X_0} A \rightarrow a^*TX_0$ and the homotopy map $\Upsilon:a^*TX_0 \times_{X_0} \bigl(X_1 \times_{X_0} X_1) \rightarrow (P \times_{X_0} A) \times_{X_0} X_1 $, respectively by  
\begin{align}
   & (p,\alpha)  \mapsto \bigl(p,a(p),(dt)_{1_{a(p)}}(\alpha)\bigr),\label{eqn: xi map for adjoint bundle} \\
   & (p,a(p),v,g,h)  \mapsto  (pgh,a(pgh),K_{\sigma}^{bas}(h^{-1},g^{-1})(v),gh). \label{eqn: Upsilon map for adjoint bundle}
\end{align}

\begin{prop}\label{prop:P times A equivariant}
For the quasi actions in \cref{eqn:Action on P times A}, and  the maps $\xi, \Upsilon$ in \cref{eqn: xi map for adjoint bundle} and \cref{eqn: Upsilon map for adjoint bundle} we have  an equivariant $\mathbb{X}$-bundle upto homotopy $P\times_{X_0}A \oplus a^*TX_0 \to P,\,  (p, \alpha, v)\mapsto p.$ 
\end{prop}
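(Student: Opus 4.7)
The plan is to verify the four compatibility identities in \cref{eqn: comptability conditions of equivariant bundle upto homotopy}, together with unitality of $\tau^0,\tau^1$, directly from the properties of the adjoint representation up to homotopy on $A\oplus TX_0$ recorded in \cref{properties of Ehresmann connection} and \cref{prop:adjoint representation}. Since $\tau^{i}_g = \lambda^{i}_{g^{-1}}$ for $i=0,1$, unitality $\tau^{i}_{1_{a(p)}}={\rm Id}$ and the linearity of $\tau^{i}_{g}\colon (E^{i})_p\to (E^{i})_{pg}$ are inherited from the corresponding statements for $\lambda^{0},\lambda^{1}$ together with $1_x^{-1}=1_x$.

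For the first compatibility, unwinding the definitions of $\xi$ in \cref{eqn: xi map for adjoint bundle} and of $\tau^0,\tau^1$ in \cref{eqn:Action on P times A} reduces the identity $\xi_{pg}\circ \tau_g^0=\tau_g^1\circ \xi_p$ to $\rho\circ \lambda^0_{g^{-1}}=\lambda^1_{g^{-1}}\circ \rho$, which is precisely the anchor equivariance \cref{eqn: rho comp lambda=lambda comp rho} evaluated at $g^{-1}$.

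For the middle two compatibilities, I would compute $\tau^{i}_{gh}-\tau^{i}_{h}\circ\tau^{i}_{g}$ for $i=0,1$ and observe that on the relevant fibres this equals $\lambda^{i}_{h^{-1}g^{-1}}-\lambda^{i}_{h^{-1}}\lambda^{i}_{g^{-1}}$. Then \cref{eqn:error of quasi action on A} and \cref{eqn:error of quasi action on TX_0}, applied to the composable pair $(h^{-1},g^{-1})$, yield $K_{\sigma}^{bas}(h^{-1},g^{-1})\circ \rho$ and $\rho\circ K_{\sigma}^{bas}(h^{-1},g^{-1})$ respectively. These match $\Upsilon(g,h)\circ \xi_p$ and $\xi_{pgh}\circ \Upsilon(g,h)$ by the definition of $\Upsilon$ in \cref{eqn: Upsilon map for adjoint bundle} and $\xi$ in \cref{eqn: xi map for adjoint bundle}, so both identities hold.

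The last compatibility is the associativity-type cocycle identity for $\Upsilon$. My strategy is to substitute $g\mapsto k^{-1}$, $h\mapsto h^{-1}$, $k\mapsto g^{-1}$ into \cref{eqn: compatability conditions of basic curvature} for $K^{bas}_{\sigma}$ and then translate the resulting identity via $\tau^i_g=\lambda^i_{g^{-1}}$ and the definition of $\Upsilon$; the four resulting terms then correspond to $\tau_k^0\circ \Upsilon(g,h)$, $\Upsilon(gh,k)$, $\Upsilon(g,hk)$, and an $\Upsilon(\cdot,\cdot)\circ \tau_g^1$ term. The one genuine piece of work is combinatorial: keeping signs and inversions consistent, since $(gh)^{-1}=h^{-1}g^{-1}$ reverses the order of composition, so matching the four basic-curvature terms on the source side to the four $\Upsilon$ terms on the target side requires care. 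This bookkeeping is the main (and really only) obstacle; once carried out, no further analytic input is needed beyond \cref{properties of Ehresmann connection}.
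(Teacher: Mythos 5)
Your proposal is correct and follows essentially the same route as the paper: the paper's proof is a one-line appeal to exactly the four facts you invoke (the anchor equivariance \eqref{eqn: rho comp lambda=lambda comp rho}, the two error formulas \eqref{eqn:error of quasi action on A} and \eqref{eqn:error of quasi action on TX_0} applied to the composable pair $(h^{-1},g^{-1})$, and the basic-curvature identity \eqref{eqn: compatability conditions of basic curvature}), recording only the two resulting error formulas \eqref{eqn:error of quasi action on fibre product of P and A}. You are right that the only delicate point is the inversion/sign bookkeeping in the fourth compatibility after substituting $(k^{-1},h^{-1},g^{-1})$ into \eqref{eqn: compatability conditions of basic curvature} --- a point the paper's proof glosses over entirely, and where the displayed condition in \cref{defn: equivariant bundle upto homotopy} itself appears to carry typographical sign and argument-order slips relative to what the substitution actually produces.
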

\begin{proof}
    Using \cref{{eqn: rho comp lambda=lambda comp rho},{eqn:error of quasi action on A},{eqn:error of quasi action on TX_0},{eqn: compatability conditions of basic curvature}}, one verifies 
    \begin{equation}\label{eqn:error of quasi action on fibre product of P and A}
  \begin{split}
   \tau_{gh}^0(\alpha)-\tau_h^0\tau_g^0(\alpha)={} & K_{\sigma}^{bas}(h^{-1},g^{-1})(dt)_{1_{s(g^{-1})}}(\alpha),     \hspace{0.5 cm}
 {\rm and}\\
    \tau_{gh}^1(v)-\tau_h^1\tau_g^1(v)={} & (dt)K_{\sigma}^{bas}(h^{-1},g^{-1})(v),
\end{split}
\end{equation}
     confirming the compatibility conditions of \cref{eqn: comptability conditions of equivariant bundle upto homotopy}. 
\end{proof}

\begin{rem}\label{Remark:Adjoint full action Cartan}
Note that if we have chosen the connection that appeared in either of \cite{MR4592876}, \cite{BEHREND2005583}, i.e., A Cartan connection, then the right-hand sides of \cref{eqn:error of quasi action on fibre product of P and A} would have identically vanished. 
\end{rem}

    \subsubsection{Groupoid $\AdP$}  Observe that,   because of the error term appearing in \cref{eqn:error of quasi action on fibre product of P and A}, the usual orbit space $\dfrac{P \times_{X_0} A \oplus a^*TX_0}{\mathbb{X}}$ is not well defined.  Hence, we first eliminate errors by taking the quotient of the subspace generated by those errors on each fibre of $P\times_{X_0} A \oplus a^*TX_0.$
Define a relation on the manifold $P \times_{X_0} A \oplus a^*TX_0$ as follows: $(p,\alpha,v) \sim (p',\alpha',v')$ if $p'=p$ and $\alpha-\alpha'$  and $v-v'$ can be written as a finite sum of errors of respective quasi action. 
\begin{equation}\label{eqn: equivalance relation on P times A plus TX_0}
\begin{split}
    i.e.\hspace{1cm} \alpha-\alpha '={} & \sum_{i=1}^{n}\lambda_{h_i^{-1}g_i^{-1}}^0(\alpha_i)-\lambda_{h_i^{-1}}^0\lambda_{g_i^{-1}}^0(\alpha_i), \\
    \hspace{1cm} v-v'={} & \sum_{i=1}^{n}\lambda_{h_i^{-1}g_i^{-1}}^1(v_i)-\lambda_{h_i^{-1}}^1\lambda_{g_i^{-1}}^1(v_i).
\end{split}
\end{equation}
for some $(p_i,\alpha_i,v_i) \in P \times_{X_0} A \oplus a^*TX_0$ with $p=p_ig_ih_i,$ for every $1\leq i \leq n.$
This relation can be easily seen to be an equivalence relation.    Now, we will construct a diffeological groupoid $\AdP.$ Define
\begin{equation} \label{Eqn:Ad(P) object morphism}
\begin{split}   
&{\rm {Obj}}\bigl(\AdP\bigr)=\dfrac{P \times_{X_0}A \oplus a^*TX_0}{\sim}=\Bigl\{[p,\alpha,v]  \, \bigl| \, \alpha \in A_{a(p)}, v \in T_{a(p)}X_0\Bigr\}\\
&{\rm {Mor}}\bigl(\AdP\bigr)=\dfrac{P \times_{X_0} A \oplus a^*TX_0}{\sim} \times_{X_0} X_1=\biggl\{\Bigl([p, \alpha, v], g\Bigr) \, \Bigl| \, a(p)=t(g),\alpha \in A_{a(p)}, v \in T_{a(p)}X_0\biggr\}, 
\end{split}
\end{equation}
with the  source, target maps  
\begin{align}\label{eqn:structural maps of Ad(P)}
\tilde{s}\Bigl([p, \alpha, v], g\Bigr)={}  [p, \alpha],  & \, \tilde{t}\Bigl([p, \alpha, v],g\Bigr)={}  [pg,\lambda_{g^{-1}}^0(\alpha), \lambda_{g^{-1}}^1(v)],
\end{align} 
and the other structure maps are defined naturally.
One verifies that the structure maps are well-defined.   Thus we have a groupoid $\AdP$.  
\subsubsection{Diffeology on $\AdP$}\label{Subsection:Diffeology on  AdP}
We impose a diffeology on the object and morphism sets of $\AdP$ such that the structure maps are smooth.

We give the quotient diffeology on $\dfrac{P \times_{X_0} A \oplus a^*TX_0}{\sim}$ (\cref{example:quotient diffeology}) induced from the smooth manifold structure on $P \times_{X_0} A \oplus a^*TX_0.$ The map $\Bar{\text{pr}}_1:\dfrac{P \times_{X_0} A \oplus a^*TX_0}{\sim} \rightarrow P$ which sends $[p, \alpha, v]$ to $p$ is a smooth map. This can be seen by considering a plot $f: U \rightarrow \dfrac{P \times_{X_0} A \oplus a^*TX_0}{\sim}$ on the quotient diffeology.  Then, for each $u \in U,$ there exists a neighbourhood $V$ of $u$ and a smooth map $(f_1,f_2)\colon V \rightarrow P \times_{X_0} A \oplus a^*TX_0$ such that $f\restriction_V=q \circ (f_1,f_2).$  That is, we have the following commutative diagram
\begin{equation}
    \begin{tikzcd}
        & V \subseteq U \arrow[d,shift right=4, "{f\restriction_V}"] \arrow[dl, "{(f_1,f_2)}"'] \\
        P \times_{X_0} A \arrow[r,"q"'] \arrow[dr,"\text{pr}_1"'] & \dfrac{P \times_{X_0} A}{\sim} \arrow[d,"\Bar{\text{pr}}_1"]  \\
        & P
    \end{tikzcd}
\end{equation}
Thus, we have the smooth map $\Bar{\text{pr}}_1 \circ f\restriction_V=\Bar{\text{pr}}_1 \circ q \circ (f_1,f_2)=\text{pr}_1 \circ (f_1,f_2)$, and the map $\Bar{\text{pr}}_1 \circ f:U \rightarrow P$ is a smooth map between manifolds $U$ and $P.$ In other words  $\Bar{\text{pr}}_1 \circ f$ is a plot in the diffeology of $P$. Hence, by the  fibre product diffeology (see \cref{example:fibre product diffeology}),  $\dfrac{P \times_{X_0} A \oplus a^*TX_0}{\sim}\times_{X_0}X_1$ is a difeological space:
\begin{equation}
\begin{tikzcd}
    \left(\dfrac{P \times_{X_0} A \oplus a^*TX_0}{\sim}\right) \times_{X_0} X_1  \arrow[r, "\text{pr}_2"] \arrow[d, "\tilde{s}"] & X_1 \arrow[d, "t"] \\
    \dfrac{P \times_{X_0} A \oplus a^*TX_0}{\sim} \arrow[r, "a \circ \Bar{\text{pr}}_1"] & X_0
\end{tikzcd}
\end{equation}
Verifying the smoothness of structure maps is more or less straightforward, albeit tedious for some. Here, we only give detailed proof for the target map.

To prove the target map $\tilde{t}\colon \dfrac{P \times_{X_0}A \oplus a^*TX_0}{\sim} \times_{X_0} X_1 \longrightarrow \dfrac{P \times_{X_0}A \oplus a^*TX_0}{\sim}$, \, $\tilde{t}\Bigl([p, \alpha, v],\gamma\Bigr)=[p\gamma, \lambda_{\gamma^{-1}}^0(\alpha), \lambda_{\gamma^{-1}}^1(v)]$ is smooth, consider a plot $(f,g):U \rightarrow \left(\dfrac{P \times_{X_0} A \oplus a^*TX_0}{\sim}\right) \times_{X_0} X_1  $. We aim to prove that the composite map $\tilde{t} \circ (f, g): U \rightarrow \dfrac{P \times_{X_0} A \oplus a^*TX_0}{\sim}$ is a plot in the quotient diffeology. Let $u \in U$.  Since $f: U \rightarrow \dfrac{P \times_{X_0} A \oplus a^*TX_0}{\sim}$ is a plot in the quotient diffeology, there exists a neighbourhood $V$ of $u$ and a smooth map $(f_1,f_2): V \rightarrow P \times_{X_0} A \oplus a^*TX_0$ (which is a plot in the fibre product diffeology of $P \times_{X_0} A \oplus a^*TX_0$) such that $f\restriction_V=q \circ (f_1,f_2)$.   Hence, we have the following commutative diagram.
\[
\begin{tikzcd}
    V \arrow[dr,"{(f,g) \restriction_V}"] \arrow[d, bend right=20,"{\left(f_1,f_2\right)}"'] \arrow[ddr, "f \restriction_V"'] \arrow[drr, bend left=30,"g \restriction_V"] & & \\
    P \times_{X_0} A \oplus a^*TX_0 \arrow[dr, bend right=20,"q"']  & \dfrac{P \times_{X_0} A \oplus a^*TX_0}{\sim} \times_{X_0} X_1 \arrow[d] \arrow[r, "\text{pr}_2"]& X_1 \arrow[d,"t"'] \\
    & \dfrac{P \times_{X_0} A \oplus a^*TX_0}{\sim} \arrow[r,"a"] & X_0
\end{tikzcd}
\]
Since the combined action map  $\tilde{\mu}:(P \times_{X_0} A \oplus a^*TX_0) \times_{X_0} X_1 \rightarrow P \times_{X_0} A \oplus a^*TX_0$ defined by $\tilde{\mu}((p, \alpha, v),g)=(pg, \lambda_{g^{-1}}^0(\alpha), \lambda_{g^{-1}}^1(v))$ is  smooth,   the composition  $\tilde{\mu} \circ \bigl((f_1,f_2), g\restriction_V\bigr):V \rightarrow P \times_{X_0} A \oplus a^*TX_0$ is smooth as well.  Hence, $\tilde{\mu} \circ \bigl(f_1,f_2,g\restriction_V\bigr)$ is a plot in the diffeology of $P \times_{X_0} A \oplus a^*TX_0.$  Moreover, it is easy to see that, $\tilde{t} \circ (f,g) \restriction_V=q \circ \tilde{\mu} \circ (f_1,f_2,g) \restriction_V$. We conclude that the target map is smooth.  

The projection $\Bar{\text{pr}_1}\colon \dfrac{P \times_{X_0} A \oplus a^*TX_0}{\sim} \rightarrow P$ has a natural diffeological graded vector space  structure (\cref{defn:definition of diffelogical vector space}) over $P$ with the vector space structure on a fibre given by 
\begin{equation}\label{eqn:Vector space fibre over P object AdP}
\begin{split}
&[p, \alpha, v]+[p, \alpha', v']=[p, \alpha+\alpha', v+v'],\\
&c[p, \alpha, v]=[p, c\alpha, cv], \forall c\in \mathbb{R}.    
\end{split}
\end{equation}
\subsubsection{The adjoint bundle $\AdP$ over $M$} \label{subsubsection:adjonit bundle AdP over M} The surjective submerison map $\pi\colon P \rightarrow M$  induces a smooth map between $\dfrac{P \times_{X_0} A \oplus a^*TX_0}{\sim}$ and $M$, defined by $\Bar{\pi}[p, \alpha, v]=\pi(p)$. Considering $M$ as the discrete category over $M,$ we have a natural smooth functor $\Bar{\pi}_{\AdP}\colon\AdP \rightarrow M,$
\begin{equation}\label{eqn:functor AdP to M}
 \begin{split}
 {\rm Obj}&(\AdP)\to M\\
 &[p, \alpha, v] \mapsto \pi(p),\\
 {\rm Mor}&(\AdP)\to M \\
  & \bigl([p, \alpha, v],g\bigr) \mapsto \pi(p).
 \end{split}   
\end{equation}
  The condition $\pi(pg)=\pi(p)$ ensures that $\Bar{\pi}$ is indeed a (smooth) functor, and surjectivity of $\pi$ implies surjectivity of $\Bar{\pi}_{\AdP}.$
 
Moreover, one can define a graded vector space of connected components of $\Bar{\pi}^{-1}_{\AdP}(m)$, for each $m \in M$ as follows: A connected component of an object $[p, \alpha, v]$ of subcategory $\Bar{\pi}^{-1}_{\AdP}(m)$ will be denoted as $\langle[p,\alpha,v]\rangle$. Let ${\mathfrak {Conn}}(\Bar{\pi}^{-1}_{\AdP}(m))$ be the set of connected components of $\Bar{\pi}^{-1}_{\AdP}(m).$ Suppose $\langle[p, \alpha, v]\rangle,\langle[q,\beta,v']\rangle$ are two connected components such that $\Bar{\pi}_{\AdP}([p,\alpha,v])=\Bar{\pi}_{\AdP}([q,\beta,v])$, then $\pi(p)=\pi(q).$  Hence, there exists a unique $g \in X_1$ such that $q=pg.$  Now define 
\begin{equation}\label{eqn:vector space structure on connected components of fibre of Ad(P)}
    \begin{split}
        \langle[p,\alpha,v]\rangle + \langle[q,\beta,v']\rangle ={} & \langle[pg,\lambda_{g^{-1}}^0(\alpha)+\beta,\lambda_{g^{-1}}^1(v)+v']\rangle.
    \end{split}
\end{equation}
One can prove that the above addition is well-defined.  That is, the addition does not depend on the choice of a representative from the connected components.  The zero element is given by the connected component $\langle[p,0,0]\rangle$, while the negative of the $\langle[p,\alpha,v]\rangle$ is given by $\langle[p,-\alpha,-v]\rangle.$
\begin{prop}\label{prop:AdP category over M}
  We have a diffeological groupoid $\AdP$ as defined in \cref{Eqn:Ad(P) object morphism}   and a smooth, surjective functor $\Bar{\pi}_{\AdP}\colon\AdP \rightarrow M$ given by \cref{eqn:functor AdP to M} such that  ${\mathfrak {Conn}}(\Bar{\pi}^{-1}_{\AdP}(m))$ is a graded vector space for each $m\in M,$ defined by \cref{eqn:vector space structure on connected components of fibre of Ad(P)}.
\end{prop}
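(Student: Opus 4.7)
The proof proceeds in four stages.

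\emph{Stage 1: Descent of the groupoid structure to the quotient.} I first need to verify that the source and target maps $\tilde s, \tilde t$ given by \cref{eqn:structural maps of Ad(P)} descend consistently along the equivalence relation in \cref{eqn: equivalance relation on P times A plus TX_0}. For $\tilde s$, this is immediate since $\sim$ preserves the first coordinate. For $\tilde t$, given $(p,\alpha,v)\sim(p,\alpha',v')$ with $\alpha-\alpha'$ and $v-v'$ expressed as sums of homotopy errors at $p$, I must show that $\lambda_{g^{-1}}^0(\alpha)-\lambda_{g^{-1}}^0(\alpha')$ and $\lambda_{g^{-1}}^1(v)-\lambda_{g^{-1}}^1(v')$ are sums of errors at $pg$. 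Applying $\lambda_{g^{-1}}^0$ to a generating error $\lambda_{h_i^{-1}g_i^{-1}}^0(\alpha_i)-\lambda_{h_i^{-1}}^0\lambda_{g_i^{-1}}^0(\alpha_i)$ and invoking the cocycle identity \cref{eqn: compatability conditions of basic curvature} (together with \cref{eqn:error of quasi action on A}) rewrites the result as a telescoping combination of further basic-curvature errors based at $pg$. An identical argument using \cref{eqn:error of quasi action on TX_0} handles the $TX_0$ component. The identity and inverse maps are handled using unitality of $\lambda$ and the fact that $\lambda_{g^{-1}}$ is a linear isomorphism of fibres; the composition descends since the error terms are additive under successive actions by composable arrows.

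\emph{Stage 2: Diffeology and smoothness.} I equip $\mathrm{Obj}(\AdP)$ with the quotient diffeology induced from the smooth manifold $P\times_{X_0}A\oplus a^*TX_0$ and give $\mathrm{Mor}(\AdP)$ the fibre-product diffeology as in the excerpt, after verifying that $\bar{\mathrm{pr}}_1$ is smooth (already carried out in the text). The smoothness of $\tilde t$ has been shown explicitly; the same template proves the smoothness of $\tilde s$ (trivial), the identity $[p,\alpha,v]\mapsto([p,\alpha,v],1_{a(p)})$ (using smoothness of the unit section of $\mbbX$), composition (using smoothness of $m$ in $\mbbX$ together with smoothness of the combined quasi action $\tilde\mu$), and inversion (using smoothness of the inverse in $\mbbX$ and $\mu$). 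Concretely, in each case one lifts a plot $(f,g)\colon U\to\mathrm{Mor}(\AdP)$ locally through the quotient map $q$ to a smooth map $(f_1,f_2,g)\restriction_V\to(P\times_{X_0}A\oplus a^*TX_0)\times_{X_0}X_1$, applies the smooth manifold-level operation, and postcomposes with $q$.

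\emph{Stage 3: The functor to $M$.} Well-definedness of $\bar\pi_{\AdP}$ follows because $\sim$ fixes the $P$-coordinate, so $[p,\alpha,v]\mapsto\pi(p)$ factors through the quotient, and the principal bundle condition $\pi(pg)=\pi(p)$ forces arrows to lie in a single fibre of $\pi$, so $\bar\pi_{\AdP}$ indeed is a functor into the discrete category $M$. Smoothness is immediate from the universal property of the quotient diffeology since $\pi\circ\mathrm{pr}_1$ is smooth on $P\times_{X_0}A\oplus a^*TX_0$; surjectivity follows from surjectivity of $\pi$.

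\emph{Stage 4: Graded vector space structure on $\mathfrak{Conn}(\bar\pi^{-1}_{\AdP}(m))$.} Given components $\langle[p,\alpha,v]\rangle$ and $\langle[q,\beta,v']\rangle$ with common image $m$, the principal bundle freeness supplies a unique $g\in X_1$ with $q=pg$, and \cref{eqn:vector space structure on connected components of fibre of Ad(P)} translates the pair along this $g$ to the common fibre over $p$ before adding. The main obstacle—and the technical heart of the proposition—is proving this sum is independent of representatives. There are two sources of ambiguity: the representative chosen within a $\sim$-class (handled by Stage 1, since adding a $\sim$-error to one summand produces a $\sim$-error in the sum, preserving the class), and the representative chosen within a connected component (reached by a chain of morphisms $([p,\alpha,v],g_i)$). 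For the latter, if $([p,\alpha,v],g)$ is an arrow to $[pg,\lambda_{g^{-1}}^0(\alpha),\lambda_{g^{-1}}^1(v)]$, then forming the sum using either endpoint with $\langle[q,\beta,v']\rangle$ produces two representatives related by the same $g$-action up to basic-curvature error terms, which by the relation $\sim$ lie in the same connected component. The zero and negative are manifestly well-defined, and compatibility of scalar multiplication with $\sim$ uses linearity of $\lambda_{g^{-1}}^0, \lambda_{g^{-1}}^1$ and $K_{\sigma}^{bas}$. Since these verifications respect the $\mathbb{Z}$-grading induced by the $A$-component (degree $0$) and the $TX_0$-component (degree $1$), $\mathfrak{Conn}(\bar\pi^{-1}_{\AdP}(m))$ inherits the structure of a graded vector space.
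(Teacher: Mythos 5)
Your proposal is correct and follows essentially the same route as the paper, which states this proposition as a summary of the preceding construction and leaves the well-definedness of the structure maps and of the addition on connected components as reader verifications; your Stages 1--4 supply exactly those checks, with the key step (descent of $\tilde t$ via \cref{eqn:error of quasi action on A} and the cocycle identity \cref{eqn: compatability conditions of basic curvature}) handled correctly. The only detail worth adding in Stage 1 is that the term $K_{\sigma}^{bas}(g^{-1},h_i^{-1})\lambda_{g_i^{-1}}^1(dt)(\alpha_i)$ arising from the cocycle expansion must be rewritten as $K_{\sigma}^{bas}(g^{-1},h_i^{-1})(dt)\bigl(\lambda_{g_i^{-1}}^0(\alpha_i)\bigr)$ using the anchor equivariance \cref{eqn: rho comp lambda=lambda comp rho}, so that it is genuinely an error of the form appearing in \cref{eqn: equivalance relation on P times A plus TX_0}.
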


\subsection{Construction of diffeological groupoid $\AtP$}
\label{Subsection:Groupoid AtP} 
\subsubsection{Making $TP \oplus a^*(TX_0) \rightarrow P$ into a quasi equivariant $\mathbb{X}$-bundle upto homotopy}
We have the natural map  $\xi={}da:TP \rightarrow a^*(TX_0)$ as the differential of the anchor map.
Let $\sigma$ be the right splitting of \cref{sequence diagram:short exact sequence with right translations} associated to the connection $\mathbb{H}$ and $\mu\colon P \times_{a, X_0, t} X_1 \rightarrow P$ is the action map. We define quasi actions of $\mathbb{X}$ on both $TP$ and $a^*TX_0$ as follows:
  \begin{equation}\label{eqn:quasi action on TP}
  \begin{split}
  &\tau^0\colon TP \times_{X_0} X_1 \rightarrow  TP, \hspace{0.5cm} (p,u) \mapsto  \bigl(pg,u.(di)_{g^{-1}}\sigma_{g^{-1}}(da)_p(u)\bigr),\\
  &\tau^1\colon a^*TX_0 \times_{X_0} X_1 \rightarrow  a^*TX_0 , \hspace{0.5cm} (p,a(p),v,g) \mapsto \bigl(pg,a(pg),\lambda_{g^{-1}}^1(v)\bigr),
  \end{split}
  \end{equation}
  where on the right-hand sides of the above equation, we adopted the convention in \cref{defn:right action of lie groupoid}.
  That is, for each $g \in X_1$, associated linear maps $\tau_g^0\colon T_p P \rightarrow T_{pg}P, \, \tau_g^1:T_{a(p)}X_0 \rightarrow T_{a(pg)}X_0$ are given by 
  \begin{align}\label{eqn:quasi action of X on TP}
  \tau_g^0(u) ={} & u.(di)_{g^{-1}}\sigma_{g^{-1}}(da)_p(u),\\
  \tau_g^1(v)={} & \lambda_{g^{-1}}^1(v)=(dt)_{g^{-1}}\sigma_{g^{-1}}(v),\label{eqn: quasi action of X on a^*TX_0}
  \end{align}
for $a(p)=t(g)$ and $u \in T_pP$.  Observe that the vector $\bigl(u,(di)_{g^{-1}}\sigma_{g^{-1}}(da)_p(u)\bigr)$ lies inside the tangent space of $P \times_{X_0} X_1$ at the point $(p,g)$, making our action  well defined.  

Again, we compute the deviation of these quasi actions from the functoriality condition in \cref{eqn:functoriality of representation}.
On one hand, we have,  
\begin{equation}\label{eqn:Action of gh on TP}
\begin{split}
\tau_{gh}^0(u)={} & u.(di)_{h^{-1}g^{-1}}\sigma_{h^{-1}g^{-1}}(da)_p(u),
\end{split}
\end{equation}
and 
on the other,
\begin{align*}
\tau_h^0 \circ \tau_g^0(u)={} & \tau_h^0\bigl(u.(di)_{g^{-1}}\sigma_{g^{-1}}(da)_p(u)\bigr) \notag
=\bigl(u.(di)_{g^{-1}}\sigma_{g^{-1}}(da)_p(u)\bigr).(di)_{h^{-1}}\sigma_{h^{-1}} \underbrace{(da)_{pg}\bigl( u.(di)_{g^{-1}}\sigma_{g^{-1}}(da)_p(u)\bigr)} \notag.
\end{align*}
Let us examine the vector in the under-braced expression in the above equation separately.  Taking the differential of the  condition (2) of \cref{defn:right action of lie groupoid} 
at $(p, g),$ we obtain  the following expression
\begin{equation}\label{eqn:reference example for relation of action with anchor}
\begin{split}
(da)_{pg}\bigl( u.(di)_{g^{-1}}\sigma_{g^{-1}}(da)_p(u)\bigr)
={} & d(s \circ {\text{pr}}_2)_{(p,g)} \bigl(u.(di)_{g^{-1}}\sigma_{g^{-1}}(da)_p(u)\bigr)  \\
={} & (ds)_g(di)_{g^{-1}}\sigma_{g^{-1}}(da)_p(u) 
\end{split}
\end{equation}
and thus,
\begin{equation}\label{eqn:action of g followed of action of h on TP}
\begin{split}
    \tau_h^0\circ \tau_g^0(u)={} & \bigl(u.(di)_{g^{-1}}\sigma_{g^{-1}}(da)_p(u)\bigr).(di)_{h^{-1}}\sigma_{h^{-1}}(ds)_g(di)_{g^{-1}}\sigma_{g^{-1}}(da)_p(u). 
    \end{split}
\end{equation}
It should be kept in mind that we have adopted the conventions of \cref{notation:differential of composition map} and   \cref{defn:right action of lie groupoid}, and $u \in T_pP, \,(di)_{g^{-1}}\sigma_{g^{-1}}(da)_p(u) \in T_gX_1, \, (di)_{h^{-1}}\sigma_{h^{-1}}(ds)_{g^{-1}}(di)_{g^{-1}}\sigma_{g^{-1}}(da)_p(u) \in T_hX_1.$
Likewise, taking the differential at the point $(p, g, h)$ for the condition (1) of \cref{defn:right action of lie groupoid}, we obtain $ d\mu_{(pg,h)} \circ \Bigl((d\mu)_{(p,g)} \times {\rm Id}_{T_hX_1}\Bigr)\Bigl(u, v, w\Bigr) ={}  d\mu_{(p,gh)} \circ \Bigl({\rm Id}_{T_pP} \times (dm)_{(g,h)}\Bigr)\Bigl(u, v, w\Bigr),$ that is,  
\begin{equation}\label{diagram:multiplicative action of torsor at tangent space level}
   \begin{split}
    (u.v).w={} & u.(v.w).
    \end{split}
\end{equation}
Inserting  \cref{diagram:multiplicative action of torsor at tangent space level} in \cref{eqn:action of g followed of action of h on TP}, we get
\begin{equation}\label{eqn:action of g followed of action of h before inverting g and h}
    \begin{split}
        \tau_h^0 \circ \tau_g^0(u) ={} & u.\biggl((di)_{g^{-1}}\sigma_{g^{-1}}(da)_p(u).(di)_{h^{-1}}\sigma_{h^{-1}}(ds)_g(di)_{g^{-1}}\sigma_{g^{-1}}(da)_p(u)\biggr).
    \end{split}
\end{equation}
Moreover, differentiating the relation between the multiplication and inverse, $(g_1 \circ g_2)^{-1}=(g_2)^{-1} \circ (g_1)^{-1}$, we obtain
\begin{equation}\label{diagram:relation of dm and di at the tangent level}
\begin{split}
(di)(v).(di)(u)={} & (di)(u.v),
\end{split}
\end{equation}
and hence \cref{eqn:action of g followed of action of h before inverting g and h} can be expressed as
\begin{equation}\label{eqn:final eqn of action of g followed of action of h}
\begin{split}
\tau_h^0 \circ \tau_g^0(u) ={} & u.\Bigl((di)_{h^{-1}g^{-1}}\bigl(\sigma_{h^{-1}}  \underbrace{(ds)_g  (di)_{g^{-1}}}_{d(s \circ i)=dt}(da)_p(u).\sigma_{g^{-1}}  (da)_p(u)\bigr)\Bigr)  \\
={} & u.\Bigl((di)_{h^{-1}g^{-1}}\bigl(\sigma_{h^{-1}}(dt)_{g^{-1}}(da)_p(u).\sigma_{g^{-1}}  (da)_p(u)\bigr)\Bigr).
\end{split}
\end{equation}
Finally, the error of this quasi action is obtained by subtracting \cref{eqn:final eqn of action of g followed of action of h} from \cref{eqn:Action of gh on TP}
\begin{equation}\label{eqn:error of induced quasi action on TP}
\begin{split}
&\tau_{gh}^0\left(u\right)-\tau_h^0\tau_g^0\left(u\right)\\
={} & 0.(di)_{h^{-1}g^{-1}}\Bigl(\sigma_{h^{-1}g^{-1}}(da)_p(u)-\bigl(\sigma_{h^{-1}}\underbrace{(dt)_{g^{-1}}\sigma_{g^{-1}}}_{\lambda=dt \circ \sigma \, by\, \cref{quasi action of X on TX_0}}(da)_p(u).\sigma_{g^{-1}}(da)_p(u)\bigr)\Bigr) \\
={} & 0.(di)_{h^{-1}g^{-1}}\Bigl(\underbrace{\sigma_{h^{-1}g^{-1}}(da)_p(u)-\bigl(\sigma_{h^{-1}}\lambda_{g^{-1}}(da)_p(u).\sigma_{g^{-1}}(da)_p(u)}_{=(dR_{h^{-1}g^{-1}})_{1_{t(h^{-1})}}K_{\sigma}^{bas}(h^{-1}g^{-1})(da)_p(u) \,  by \, \cref{eqn:measure of failure of closedness of connection}}\bigr)\Bigr) \\
={} & 0.\underbrace{(di)_{h^{-1}g^{-1}}(dR_{h^{-1}g^{-1}})_{1_{t(h^{-1})}}}K_{\sigma}^{bas}(h^{-1}, g^{-1})(da)_p(u).
\end{split}
\end{equation}
Differentiating the following relation 
\begin{equation}\label{diagram:i comp R=L comp i}
\begin{split}
    i \circ R_{h^{-1}g^{-1}}={} & L_{gh} \circ i
\end{split}
\end{equation}
the underbraced term given in the last line of \cref{eqn:error of induced quasi action on TP} turns into the following: 
\begin{equation}\label{eqn:simplified version of error of quasi action on TP}
    \begin{split}
        \tau_{gh}^0(u)-\tau_h^0\tau_g^0(u)={} & 0.\underbrace{(dL_{gh})_{1_{a(pgh)}}(di)_{1_{a(pgh)}}}_{=l_{gh} \, by \, \cref{eqn:description of left translations at tangent level}}K_{\sigma}^{bas}(h^{-1},g^{-1})(da)_p(u) \\
        ={} & 0.l_{gh}\bigl(K_{\sigma}^{bas}(h^{-1},g^{-1})(da)_p(u)\bigr).
    \end{split}
\end{equation}
Error of the other quasi action on $a^*TX_0,$ as follows:
\begin{equation}\label{eqn: error of quasi action on TX_0=dt comp basic curvature}
    \begin{split}
        \tau_{gh}^1(v)-\tau_h^1\circ\tau_g^1(v)={} & (dt)K_{\sigma}^{bas}(h^{-1},g^{-1})(v).
    \end{split}
\end{equation}
Hence, from \cref{eqn:simplified version of error of quasi action on TP}, we get that the map 
\begin{equation}\label{eqn: Upsilon map for atiyah bundle}
\begin{split}            
& \Upsilon(h,g):T_{a(p)}X_0 \rightarrow {} {} T_{pgh}P, \hspace{0.2 cm} u \mapsto 0.l_{gh}K_{\sigma}^{bas}(h^{-1},g^{-1})(u),
\end{split}
\end{equation}
provides a homotopy between linear maps $\tau_{gh}$ and $\tau_h \circ \tau_g.$
Despite the messy calculations and untidy expressions, the important piece of information  to retain in our mind is that the error term in \cref{eqn:simplified version of error of quasi action on TP}
again determined by the homotopy term  of \cref{prop:adjoint representation}.

\begin{rem}\label{Rem:error of quasi action on TP lies insie vertical}
Applying $(d\pi)_{pgh}$ on both sides of \cref{eqn:simplified version of error of quasi action on TP} 
we see that $\bigl(\tau_{gh}(u)-\tau_h\tau_g(u)\bigr)\in \ker({d\pi}_{pgh}).$  Altogether, we arrive at the following proposition
\end{rem}
\begin{prop}\label{prop: TP plus a^*TX_0 is a equivariant bundle upto homotopy}
Let $\bigl(\mbbX,\mathbb{H}\bigr)$  be a Lie groupoid equipped with Ehresmann connection and $\pi:P \rightarrow M$ be a principal $\mathbb{X}$-bundle.  Then, the map $\xi=da:TP \rightarrow a^*TX_0,$ quasi actions $\tau^0,\tau^1$(defined as in \cref{eqn:quasi action on TP} and the homotopy map $\Upsilon$(defined as in \cref{eqn: Upsilon map for atiyah bundle}) makes the graded vector bundle $TP \oplus a^*TX_0 \rightarrow P$ into an equivariant $\mathbb{X}$-bundle upto homotopy.
\end{prop}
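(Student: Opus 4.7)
The plan is to verify each of the four compatibility conditions of Definition \cref{defn: equivariant bundle upto homotopy} one by one, relying on the calculations already assembled in the run-up to the statement together with Proposition \cref{properties of Ehresmann connection}, which encodes all the relevant identities for the basic curvature $K^{bas}_\sigma$.

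First I would confirm that $\tau^0$ and $\tau^1$ are bona fide quasi actions in the sense of Definition \cref{defn:quasi equivariant X-bundle over a X-space}. Fibrewise linearity is immediate since each of $(da)_p$, $\sigma_{g^{-1}}$, $(di)_{g^{-1}}$, and $d\mu$ is fibrewise linear. Unitality $\tau^0_{1_{a(p)}} = \mathrm{Id}_{T_pP}$ reduces, after substituting $\sigma_{1_x} = (d1)_x$ and $(di)_{1_x} \circ (d1)_x = (d1)_x$, to the identity $u\cdot (d1)_{a(p)}(da)_p(u) = u$, which follows by differentiating the composite $p \mapsto (p, 1_{a(p)}) \mapsto \mu(p, 1_{a(p)}) = p$ at $p$. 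Unitality of $\tau^1$ is already part of Proposition \cref{prop:adjoint representation}.

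Next, the intertwining identity $\xi_{pg} \circ \tau^0_g = \tau^1_g \circ \xi_p$ is exactly the content of equation \cref{eqn:reference example for relation of action with anchor}: applying $(da)_{pg}$ to $\tau^0_g(u) = u \cdot (di)_{g^{-1}}\sigma_{g^{-1}}(da)_p(u)$ yields $(ds)_{g^{-1}} \circ (di)_{g^{-1}}\sigma_{g^{-1}}(da)_p(u) = (dt)_{g^{-1}}\sigma_{g^{-1}}(da)_p(u) = \lambda^1_{g^{-1}}\xi_p(u) = \tau^1_g \xi_p(u)$. The first homotopy identity $\tau^0_{gh} - \tau^0_h \tau^0_g = \Upsilon(g,h) \circ \xi_p$ has essentially been carried out in the chain of calculations culminating in \cref{eqn:simplified version of error of quasi action on TP}; once that form is in hand, the factorisation through $\xi_p = (da)_p$ is read off by inspection of the defining formula \cref{eqn: Upsilon map for atiyah bundle} for $\Upsilon$. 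The second homotopy identity $\tau^1_{gh} - \tau^1_h \tau^1_g = \xi_{pgh} \circ \Upsilon(g,h)$ combines \cref{eqn: error of quasi action on TX_0=dt comp basic curvature} on the left with the following computation on the right: $(da)_{pgh}$ applied to $0\cdot l_{gh}\bigl(K^{bas}_\sigma(h^{-1},g^{-1})(u)\bigr)$ collapses, via $a\circ\mu = s\circ \mathrm{pr}_2$, to $(ds)_{gh}\circ l_{gh}\bigl(K^{bas}_\sigma(h^{-1},g^{-1})(u)\bigr) = \rho\bigl(K^{bas}_\sigma(h^{-1},g^{-1})(u)\bigr) = (dt)K^{bas}_\sigma(h^{-1},g^{-1})(u)$, where the relation $(ds)\circ l = \rho$ is obtained by differentiating $s\circ L_{gh}\circ i = t$ along a source fibre.

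Finally, the cocycle-type identity $\tau^0_k \circ \Upsilon(g,h) - \Upsilon(gh,k) + \Upsilon(g,hk) - \Upsilon(k,h) \circ \tau^1_g = 0$ is obtained by propagating the analogous relation \cref{eqn: compatability conditions of basic curvature} for $K^{bas}_\sigma$ upward through the left translation $l$. Since every $\Upsilon(\cdot,\cdot)$ takes values in the purely vertical subbundle $\ker(d\pi) \subset TP$ (cf.\ \cref{Rem:error of quasi action on TP lies insie vertical}), and since $\tau^0_k$ applied to a vector of the form $0\cdot l_{gh}(\beta)$ can, by a direct differentiation of the groupoid identities $i\circ R = L\circ i$ and $s\circ R = s$, be rewritten again as $0\cdot l_{ghk}(\lambda^0_{k^{-1}}\beta)$, the $K^{bas}_\sigma$-cocycle on $A$ transports to the required $\Upsilon$-cocycle on $TP$. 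The main obstacle is precisely this last transport step: while Proposition \cref{properties of Ehresmann connection}(3) yields a clean cocycle identity on $A$, lifting it via the vertical injection $0\cdot l_{(\cdot)}$ demands careful bookkeeping of the $g^{-1}$ inversion conventions and a verification that $\tau^0_k$ preserves the ``$0\cdot l$'' form, which is the most delicate piece of the argument.
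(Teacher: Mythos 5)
Your proposal is correct and follows essentially the same route as the paper: the four compatibility conditions of the definition are checked exactly as in the paper's proof and its appendix — the intertwining via $d(s\circ i)=dt$, the first homotopy identity read off from the error formula for $\tau^0$, the second from $(ds)_{gh}\circ l_{gh}=dt$ applied to $K^{bas}_\sigma$, and the cocycle identity by transporting the basic-curvature relation through the vertical injection $0\cdot l_{(\cdot)}$, which you rightly flag as the delicate step carried out at length in the paper's appendix. The only addition is your explicit check of unitality of $\tau^0$, which the paper leaves implicit.
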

\begin{proof}
    Most of the calculations are lengthy and tedious, and carried out in \cref{subsection: Calculations for atiyah bundle proposition} in the Appendix.  Suppose $(p,g) \in P \times_{X_0} X_1$, then by \cref{eqn: computation of da comp Upsilon} $(da)_{pg}\bigl(\tau_g^0(u))={} \tau_g^1(da)_p(u)$.  For a triple $(p,g,h)$ satisfying $a(p)=t(g)$ and $s(g)=t(h),$ we have by \cref{{eqn: computation of deviation of tau^1 for atiyah bundle},{eqn: computation of da comp Upsilon for atiyah bundle}}$, \, \tau_{gh}^0(u)-\tau_h^0\tau_g^0(u)={} \Upsilon(g,h)(da)_p(u)$ and $(da)_{pg}\Upsilon(g,h)(v)={} \tau_{gh}^1(v)-\tau_h^1\tau_g^1(v).$ 
    
    Finally, for a tuple $(p,g,h,k)$ satisfying $a(p)=t(g), \, s(g)=t(h), \, s(h)=t(k)$ the following compatibility condition
    \begin{equation}
        \tau_k^0\Upsilon(g,h)-\Upsilon(gh,k)+\Upsilon(g,hk)-\Upsilon(h,k)\tau_g^1={} 0,
    \end{equation} is verified using \cref{{eqn: first term for homotopy map for atiyah case},{eqn: second term of homotopy map relation for atiyah case},{eqn: third term of homotopy map relation for atiyah case},{eqn: fourth term of homotopy map relation for atiyah case}}.
\end{proof}

\subsubsection{Groupoid $\AtP$} \label{subsubsection:adjonit bundle AtP over M}
As with the adjoint bundle, we define the following relation on $TP \oplus a^*TX_0$ to nullify the errors.  Define $(p, u, v) \sim (p', u', v)$ if $p'=p$ and $u-v$ and $u'-v'$ can be written as the sum of errors of quasi action. i.e.  $$u-v=\sum_{i=1}^{n}\tau_{g_ih_i}(u_i)-\tau_{h_i}\tau_{g_i}(u_i), \hspace{0.5cm} u'-v'=\sum_{i=1}^n\tau_{g_ih_i}^1(v_i)-\tau_{h_i}^1\tau_{g_i}(v_i) $$ for some $u_i \in T_{p_i}P, \, v_i \in T_{a(p_i)}X_0$ such that $p=p_ig_ih_i$, for every $1\leq i \leq n$. Now, we construct the  category $\AtP$: 
Define
\begin{equation} \label{Eqn:At(P) object morphism}
\begin{split}   
&{\rm {Obj}}\bigl(\AtP\bigr)=\dfrac{TP}{\sim}=\Bigl\{[p, u, v]  \, \bigl| \, u \in T_p P, \, v \in T_{a(p)}X_0 \Bigr\}\\
&{\rm {Mor}}\bigl(\AtP\bigr)=\dfrac{TP}{\sim} \times_{X_0} X_1=\biggl\{\Bigl([p, u, v], g\Bigr) \, \Bigl| \, a(p)=t(g),u \in T_p P, \, v \in T_{a(p)}X_0  \biggr\}, 
\end{split}
\end{equation}
with the structure maps defined as we did for $\AdP$.
Verification of the welldefinedness of various structure maps is similar to that of $\AdP.$

\subsubsection{Diffeology on $\AtP$}
Diffeology on $\AtP$ is very similar to that of $\AdP.$
\subsubsection{The Atiyah bundle $\AtP$ over $M$}\label{subsubsection:Atiyah bundle over M} 
The surjective submersion $\pi:P \rightarrow M$ induces a smooth functor $\Bar{\pi}_{\AtP}:\AtP \rightarrow M$ defined as 
\begin{equation}\label{eqn:functor from AtP to ActTM}
    \begin{split}
        & {\rm{Obj}}\bigl(\AtP\big)   \rightarrow M ,\hspace{0.3cm} [p, u, v]   \mapsto \pi(p), \\
        & {\rm{Mor}}\big(\AtP\bigr)  \rightarrow M , \hspace{0.3cm} \bigl([p, u, v],g\bigr)  \mapsto \pi(p).
    \end{split}
\end{equation}
As with the case of $\AdP$, one can define a graded vector space structure on connected components $\Bar{\pi}^{-1}_{\AtP}(m),$ for each $m \in M.$ Let $\langle[p,u]\rangle$ denote the connected component of $[p,u]$ in the category  $\Bar{\pi}^{-1}_{\AtP}(m)$.  Suppose, $\langle[p, u, v]\rangle,\langle[q, u', v']\rangle$ are two connected components such that $\Bar{\pi}_{\AtP}\bigl([p, u, v]\bigr)=\Bar{\pi}_{\AtP}\bigl([q, u', v']\bigr)$, then $\pi(p)=\pi(q).$  That means, there exists a unique $g \in X_1$ such that $q=pg.$  Now, define 
\begin{equation}\label{eqn:vector space stucture on connected components of Atiyah bundle}
    \begin{split}
        \langle[p, u, v]\rangle+\langle[q, u', v']\rangle={} & \langle[pg, \tau_g^0(u)+u', \tau_g^1(v)+v']\rangle.
    \end{split}
\end{equation}
It is not difficult to verify that the above addition is well-defined. Hence, we get the following proposition.
\begin{prop}\label{prop:AtP category over M}
  We have a diffeological groupoid $\AtP$ as defined in \cref{Eqn:At(P) object morphism}   and a smooth, surjective functor $\Bar{\pi}_{\AtP}\colon\AtP \rightarrow M$  such that the connected components of each fibre $\Bar{\pi}^{-1}_{\AtP}(m)$ form a graded vector space.
  \end{prop}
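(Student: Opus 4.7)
The plan is to closely mirror the construction of $\AdP$ in \cref{prop:AdP category over M}, leveraging that $TP \oplus a^*TX_0$ has been equipped, via \cref{prop: TP plus a^*TX_0 is a equivariant bundle upto homotopy}, with the same algebraic data (a pair of quasi actions $\tau^0,\tau^1$, a bundle map $\xi = da$, and a homotopy $\Upsilon$) satisfying the compatibility relations in \cref{eqn: comptability conditions of equivariant bundle upto homotopy}. First I would verify that the relation $\sim$ from \cref{Eqn:At(P) object morphism} is an equivalence relation: reflexivity follows from taking the empty sum, symmetry from negating each summand, and transitivity from concatenating sums. The subspace of ``errors'' in the fibre over $p$ is therefore a well-defined vector subspace, and the quotient fibres inherit well-defined graded vector space structure.

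Next I would check that the proposed source, target, identity, composition, and inverse descend to the quotient. The only nontrivial one is the target map $\tilde t\bigl([p,u,v],g\bigr) = [pg,\tau_g^0(u),\tau_g^1(v)]$. Well-definedness amounts to showing that if $u-u'$ is a sum of error terms at $p$, then $\tau_g^0(u)-\tau_g^0(u')$ is a sum of error terms at $pg$ (and similarly for $v,v'$). This is exactly what the fourth compatibility relation of \cref{eqn: comptability conditions of equivariant bundle upto homotopy}, namely $\tau_k^0\circ \Upsilon(g,h) - \Upsilon(gh,k) + \Upsilon(g,hk) - \Upsilon(h,k)\circ\tau_g^1 = 0$, is designed to guarantee: applying a further quasi action to an image of $\Upsilon$ produces a signed sum of images of $\Upsilon$, i.e.\ again a sum of errors. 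Then, just as was spelled out in detail for $\AdP$ in \cref{Subsection:Diffeology on AdP}, equip the objects with the quotient diffeology along $q\colon TP \oplus a^*TX_0 \to (TP \oplus a^*TX_0)/\!\sim$, and the morphism space with the fibre product diffeology; the smoothness of $\tilde s,\tilde t,$ identity, inverse, and composition follows by the same plot-lifting argument used there, since each of them is induced from an obviously smooth map at the level of $TP \oplus a^*TX_0$.

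For the functor $\bar{\pi}_{\AtP}$, surjectivity is immediate from surjectivity of $\pi$, the functoriality follows because $\pi(pg)=\pi(p)$ forces $\bar{\pi}_{\AtP}$ to send every arrow in a fibre to a single identity in $M$, and smoothness follows because $\bar{\pi}_{\AtP}\circ q = \pi \circ \mathrm{pr}_1$ is smooth and $M$ carries its manifold diffeology, so the pushforward characterisation in \cref{example:pushforward diffeology} applies.

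The last and most delicate step is the graded vector space structure on $\mathfrak{Conn}(\bar{\pi}_{\AtP}^{-1}(m))$ defined by \cref{eqn:vector space stucture on connected components of Atiyah bundle}. I expect this to be the main obstacle, because one must show (i) the formula is independent of the chosen representatives $[p,u,v],[q,u',v']$ inside their connected components, and (ii) it is independent of the unique $g\in X_1$ that ``aligns'' the two base points. For (i), note that by construction, moving inside a connected component means either adding an error term in the fibre (which quotients out under $\sim$) or applying a sequence of arrows $\bigl([p,u,v],g\bigr)$; in the latter case, I would apply $\tau_g^0$ and $\tau_g^1$ and use the quasi-action formulas together with the compatibility relations to show the sum in \cref{eqn:vector space stucture on connected components of Atiyah bundle} changes only by an error term, and hence represents the same connected component. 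For (ii), uniqueness of $g$ is already guaranteed because $\pi\colon P\to M$ is a principal $\mathbb X$-bundle (\cref{defn:torsor}(2)). The zero and additive inverse are given by $\langle[p,0,0]\rangle$ and $\langle[p,-u,-v]\rangle$ respectively, and the grading is inherited from the grading on $TP \oplus a^*TX_0$. Assembling these verifications completes the proof.
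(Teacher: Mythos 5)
Your proposal is correct and follows essentially the same route as the paper, which proves this proposition simply by declaring it ``virtually identical'' to the construction of $\AdP$ (\cref{prop:AdP category over M}) and relying on the compatibility relations established in \cref{prop: TP plus a^*TX_0 is a equivariant bundle upto homotopy}. Your extra observation — that well-definedness of the target map and of the addition on connected components reduces to the fact that applying a further quasi action to an error term yields again a signed sum of error terms (via the last relation of \cref{eqn: comptability conditions of equivariant bundle upto homotopy} together with $\xi_{pg}\circ\tau_g^0=\tau_g^1\circ\xi_p$) — correctly fills in the detail the paper leaves implicit.
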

\begin{proof}
Virtually identical to that of \cref{prop:AdP category over M}.
\end{proof}
\subsection{Construction of diffeological groupoid $\ActTM$}

\subsubsection{Making $\pi^*TM \xrightarrow[]{{\rm pr}_1} P$ into a quasi equivariant $\mathbb{X}$-bundle}
We do not require a connection here because there is a natural action of $\mathbb{X}$ on $\pi^*TM$ given as  
\begin{equation}
    \begin{split}
        \tau\colon \pi^*TM \times_{X_0} X_1 & \rightarrow \pi^*TM \notag \\
        (p,\pi(p),u, g) & \mapsto (pg, \pi(pg), u).
    \end{split}
\end{equation}
For each $g \in X_1$, the associated linear map on fibres $\tau_g\colon T_{\pi(p)}M \rightarrow T_{\pi(p)} M$ is the identity map for each $p \in P$ satisfying $a(p)=t(g).$
We have the associated groupoid  $\ActTM)$
\begin{equation}\label{eqn:Obj and Mor of ActTM}
\begin{split}
{\rm{Obj}}\bigl(\ActTM\bigr)={} & \pi^*TM ={}  \bigl\{(p,\pi(p),u) \mid p \in P, \, u \in T_{\pi(p)}M\bigr\},\\
{\rm{Mor}}\bigl(\ActTM\bigr)={} & \pi^*TM \times_{X_0} X_1 ={}  \bigl\{(p,\pi(p),u,g) \mid p \in P, \, u \in T_{\pi(p)}, \, a(p)=t(g) \bigr\},
\end{split}
\end{equation}
with the obvious structure maps.
\subsubsection{Diffeology on $\ActTM$}
 Both the sets, Obj$(\ActTM)$ and Mor$(\ActTM)$, are manifolds with smooth structure maps.  Hence, $\ActTM$ is a Lie groupoid.
\subsubsection{The bundle $\ActTM$ over $M$}\label{subsubsection:The bundle ActTM over M}
 Likewise with  $\AdP$ and $\AtP$, the surjective submersion $\pi:P \rightarrow M$ induces a smooth functor $\Bar{\pi}:\ActTM \rightarrow M$,
\begin{equation}\label{eqn:functor form ActTM to M}
\begin{split}
    &{\rm{Obj}}\bigl(\ActTM\bigr)  \rightarrow M, \hspace{0.3cm} (p,\pi(p),u)  \mapsto \pi(p), \\
    &{\rm{Mor}}\bigl(\ActTM\bigr)  \rightarrow M, \hspace{0.3cm} (p,\pi(p),u,g)  \mapsto \pi(p).
    \end{split}
\end{equation}
Notice that, for each $m \in M$, the connected components of the corresponding fibre $\Bar{\pi}^{-1}(m)$ are equal to $T_mM$ and thus inherit a vector space structure.
\begin{prop} \label{prop:ActM category over M}
    We have a Lie (and, thus, diffeological) groupoid  $\ActTM$ as defined in \cref{eqn:Obj and Mor of ActTM} and a smooth surjective functor $\Bar{\pi}:\ActTM \rightarrow M$ such that the connected components of each fibre $\Bar{\pi}^{-1}(m)$ is a vector space. 
\end{prop}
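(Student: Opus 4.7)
The plan is to verify the three assertions of the proposition in turn, most of which amount to unwinding the definitions in \cref{eqn:Obj and Mor of ActTM} and \cref{eqn:functor form ActTM to M}, since in contrast to $\AdP$ and $\AtP$ no homotopy correction term appears for the action on $\pi^{*}TM$.

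First, I would verify that $\ActTM$ is a Lie groupoid. The object set $\pi^{*}TM = P \times_{M} TM$ is the fibre product of the surjective submersion $\pi\colon P \to M$ with the tangent bundle projection $TM \to M$, hence a smooth manifold. Likewise $\pi^{*}TM \times_{X_0} X_1$ is a smooth manifold since $a\circ \mathrm{pr}_1$ and $t$ are submersions (using \cref{Rem:Necessity of submersion}). The source, target, unit, inversion and composition maps inherited from the action of $\mathbb{X}$ on $\pi^{*}TM$ described just above \cref{eqn:Obj and Mor of ActTM} are smooth because the action of $\mathbb{X}$ on $P$ is smooth and the induced linear map $\tau_g\colon T_{\pi(p)}M \to T_{\pi(pg)}M$ is the identity under the canonical identification $\pi(p)=\pi(pg)$ from \cref{defn:torsor}(1). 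Functoriality and unitality are immediate, so $\ActTM$ is a genuine Lie groupoid; by \cref{example:manifold as a diffeological space} it is therefore also diffeological.

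Second, I would check that $\bar{\pi}\colon \ActTM \to M$ is a well-defined smooth surjective functor. On objects $\bar{\pi}(p,\pi(p),u)=\pi(p)$ and on morphisms $\bar{\pi}(p,\pi(p),u,g)=\pi(p)$; the identity $\pi(pg)=\pi(p)$ from \cref{defn:torsor}(1) guarantees that $\bar{\pi}$ sends source and target of a morphism to the same point of $M$ (viewed as the discrete category over itself), so $\bar{\pi}$ is a functor. Smoothness follows from the smoothness of $\pi$ and the projections defining the source/target in $\ActTM$, and surjectivity is inherited from the surjectivity of $\pi$.

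Third, I would identify the fibre $\bar{\pi}^{-1}(m)$ and its set of connected components. For $m\in M$ the objects of $\bar{\pi}^{-1}(m)$ are triples $(p,\pi(p),u)$ with $p\in \pi^{-1}(m)$ and $u\in T_m M$, while a morphism $(p,\pi(p),u,g)$ connects $(p,\pi(p),u)$ with $(pg,\pi(pg),u)$. Since $\pi^{-1}(m)$ is a single $\mathbb{X}$-orbit by the diffeomorphism condition in \cref{defn:torsor}(2), any two objects with the same $u\in T_m M$ are connected in $\bar{\pi}^{-1}(m)$, and distinct $u$'s give distinct components because $\tau_g$ fixes $u$. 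Hence the map $\langle(p,\pi(p),u)\rangle\mapsto u$ is a well-defined bijection of the set of connected components $\mathfrak{Conn}(\bar{\pi}^{-1}(m))$ with $T_m M$, and we transport the vector space structure along this bijection. This is consistent with the formula \cref{eqn:vector space stucture on connected components of Atiyah bundle} specialised to the present (trivial-homotopy) situation, since $\tau_g=\mathrm{Id}$ on the $T_{\pi(p)}M$ factor.

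The only subtle point is checking that the vector space operation on connected components is well defined independently of representatives, but this is immediate from $\tau_g={\rm Id}$ on the fibres of $\pi^{*}TM$; there is no analogue of the error term that forced the $\sim$-quotient in \cref{Subsection:Groupoid AdP} and \cref{Subsection:Groupoid AtP}. Thus the proof is essentially a direct-verification argument and no substantive obstacle arises.
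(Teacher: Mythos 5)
Your proposal is correct and follows essentially the same route as the paper, which proves this proposition simply by the surrounding discussion: $\mathrm{Obj}(\ActTM)$ and $\mathrm{Mor}(\ActTM)$ are honest manifolds with smooth structure maps (so $\ActTM$ is a Lie groupoid), and the connected components of $\Bar{\pi}^{-1}(m)$ are identified with $T_mM$ precisely because $\tau_g=\mathrm{Id}$ on the $TM$ factor and $\pi^{-1}(m)$ is a single $\mathbb{X}$-orbit. Your extra care in checking that distinct $u$'s lie in distinct components and that the vector space structure is representative-independent is consistent with, and slightly more explicit than, the paper's treatment.
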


It is obvious that the map  $d\pi$ in  \cref{sequence diagram:short exact sequence with fundamental vector field} is a map of quasi $\mbbX$-equivariant bundle upto homotopy. The same is true for $(d\delta,{\rm Id}) $ as well. However, it demands substantial work 
 to prove the same for $d\delta,$ and the proof will be carried out in \cref{thm:equvariance of fundamental vector field map}. In conclusion:
\begin{prop}\label{prop:Sequence Q-EQUIV}
The sequence in \cref{sequence diagram:short exact sequence with fundamental vector field}  is a short exact sequence in   the category $\mathcal {Q-EQUIV}(P, \mathbb{X}).$
\end{prop}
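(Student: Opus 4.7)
The plan is to verify three items that together constitute a short exact sequence in $\mathcal {Q-EQUIV}(P, \mathbb{X})$: (i) each of the three graded vector bundles is an object of the category, (ii) both $(d\delta, {\rm Id})$ and $d\pi$ are morphisms in the sense of \cref{defn: equivariant bundle upto homotopy}, and (iii) the underlying sequence of graded vector bundles over $P$ is fibrewise short exact. Item (i) for the first two bundles is already supplied by \cref{prop:P times A equivariant} and \cref{prop: TP plus a^*TX_0 is a equivariant bundle upto homotopy}; for $\pi^*TM$, the action is a genuine (unital, functorial) action, so it is trivially a quasi equivariant $\mathbb{X}$-bundle upto homotopy with zero degree-$1$ part and vanishing $\xi$ and $\Upsilon$. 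For (iii), the map $(d\delta)_p\colon A_{a(p)}\to T_pP$ is injective because it is the differential at $1_{a(p)}$ of the diffeomorphism $\delta_p\colon s^{-1}(a(p))\to \pi^{-1}(\pi(p))$, so $\mathrm{Im}(d\delta)_p=\ker(d\pi)_p$; this combined with the identity on $a^*TX_0$ and the fibrewise surjectivity of $d\pi$ (since $\pi$ is a surjective submersion) gives exactness.

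For (ii) applied to $d\pi$, the $\xi$-compatibility is vacuous. The quasi-action compatibility reduces to
\[
(d\pi)_{pg}\bigl(u\cdot(di)_{g^{-1}}\sigma_{g^{-1}}(da)_p(u)\bigr)=(d\pi)_p(u),
\]
which follows by differentiating the bundle identity $\pi\circ\mu=\pi\circ{\rm pr}_1$ from \cref{defn:torsor} at $(p,g)$. Compatibility with $\Upsilon$ requires $(d\pi)_{pgh}\bigl(0\cdot l_{gh}K^{bas}_{\sigma}(h^{-1},g^{-1})(v)\bigr)=0$, which holds because that vector is vertical: its $TP$-component is obtained from $d\mu$ applied to a pair with zero first entry and with second entry $l_{gh}(\cdot)\in\ker dt$, producing a vector tangent to the $\pi$-fibre.

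For (ii) applied to $(d\delta,{\rm Id})$, the $\xi$-compatibility boils down to $(da)_p\circ(d\delta_p)_{1_{a(p)}}=(dt)_{1_{a(p)}}$, which is obtained by differentiating $a\circ\delta_p=t|_{s^{-1}(a(p))}$ at $1_{a(p)}$. Compatibility with the homotopy maps, using \cref{eqn: Upsilon map for adjoint bundle}, \cref{eqn: Upsilon map for atiyah bundle}, \cref{eqn:description of fundamental vector field map} and \cref{eqn:description of left translations at tangent level}, unwinds to the statement that $(d\delta)_{pgh}$ applied to $K^{bas}_{\sigma}(h^{-1},g^{-1})(v)\in A_{a(pgh)}$ agrees with $0\cdot l_{gh}K^{bas}_{\sigma}(h^{-1},g^{-1})(v)\in T_{pgh}P$; this is a direct comparison of the two explicit formulas after noting that both involve $d\mu$ applied to a pair whose first entry vanishes.

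The main obstacle is the remaining piece, the equivariance of $d\delta$ for the quasi actions $\tau^0$, namely the identity
\[
(d\delta)_{pg}\bigl(\lambda^0_{g^{-1}}(\alpha)\bigr)=(d\delta)_p(\alpha)\cdot (di)_{g^{-1}}\sigma_{g^{-1}}(da)_p\bigl((d\delta)_p(\alpha)\bigr).
\]
I expect this to be the hard step: one has to expand $\lambda^0_{g^{-1}}(\alpha)=-\psi_{g^{-1}}l_{g^{-1}}(\alpha)$ and split $l_{g^{-1}}(\alpha)$ into its horizontal and vertical components using the splitting relation $r\circ\psi+\sigma\circ ds={\rm Id}$ of \cref{eqn:relation between three definition of connections on lie groupoid}, and then compare both sides by exploiting the associativity of the tangent action \cref{diagram:multiplicative action of torsor at tangent space level} together with \cref{diagram:relation of dm and di at the tangent level}. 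This is precisely the content that will be established as \cref{thm:equvariance of fundamental vector field map}; once that is in hand, the present proposition follows by assembling the items above.
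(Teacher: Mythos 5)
Your proposal is correct and follows essentially the same route as the paper: objecthood of the two nontrivial terms is delegated to \cref{prop:P times A equivariant} and \cref{prop: TP plus a^*TX_0 is a equivariant bundle upto homotopy}, the morphism property of $d\pi$ is an easy consequence of $\pi\circ\mu=\pi\circ{\rm pr}_1$ and the verticality of the homotopy term (\cref{Rem:error of quasi action on TP lies insie vertical}), and the genuinely hard piece — the $\tau^0$-equivariance of $d\delta$ — is correctly isolated and deferred to \cref{thm:equvariance of fundamental vector field map}, exactly as the paper does. Your identification of the $\xi$-compatibility for $(d\delta,{\rm Id})$ with $(da)_p\circ(d\delta_p)_{1_{a(p)}}=(dt)_{1_{a(p)}}$ and of the homotopy compatibility with the comparison carried out in \cref{prop:fundamental vector field map preserve error of quasi action} matches the paper's computations.
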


 Next, we will investigate the behaviour of the fundamental vector field map $d\delta$ defined in \cref{eqn:fundamental vector field map} with respect to the quasi actions defined on both $P \times_{X_0} A$ and $TP.$ In particular we show that the $d\delta$ map behaves well under the quotients in \cref{Subsection:Groupoid AdP} and \cref{Subsection:Groupoid AtP}  and extends to a `linear' functor $\bar d\delta\colon \AdP \to \AtP$.
  
\subsection{The functor $\bar d\delta$}\label{subsection:Functor ddelta}

\begin{prop}\label{prop:fundamental vector field map preserve error of quasi action}
    Let $d\delta:P \times_{X_0} A \rightarrow TP$ be the fundamental vector field map defined as in \cref{eqn:description of fundamental vector field map}, $\mathbb{H}$  a connection on $\mathbb{X}$, and $\lambda,\tau$ the quasi actions defined as in \cref{quasi action of X on A}, \cref{eqn:quasi action of X on TP} respectively.  Then, \begin{equation}\label{eq:ddelta preserves error}
(d\delta)_{pgh}\bigl(\lambda_{h^{-1}g^{-1}}^0(\alpha)-\lambda_{h^{-1}}^0\lambda_{g^{-1}}^0(\alpha)\bigr)=\tau_{gh}^0\bigl((d\delta_{p})_{1_{a(p)}}(\alpha)\bigr)-\tau_h^0\tau_g^0\bigl((d\delta_p)_{1_{a(p)}}(\alpha)\bigr),    
    \end{equation} 
for every $p \in P$ and $g, h \in X_1$ satisfying $a(p)=t(g), s(g)=t(h)$ and $\alpha \in A_{s(g^{-1})}=\ker (ds)_{1_{s(g^{-1})}}=T_{1_{s(g^{-1})}}s^{-1}(s(g^{-1})).$
\end{prop}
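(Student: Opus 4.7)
The idea is to reduce both sides of the claimed identity to the single basic-curvature vector $\beta := K_{\sigma}^{bas}(h^{-1},g^{-1})(dt)_{1_{a(p)}}(\alpha) \in A_{a(pgh)}$, and then prove a compatibility identity
\[
(d\delta)_{pgh}(\beta) \;=\; 0_p \cdot l_{gh}(\beta)
\]
that expresses equivariance of the fundamental vector field map under left translation. This identity is in turn a direct consequence of the associativity of the action on $P$.

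\textbf{Step 1: Rewrite the LHS.} Applying \cref{eqn:error of quasi action on A} with $g \to h^{-1}$ and $h \to g^{-1}$ (valid since the composability $s(g)=t(h)$ gives $s(h^{-1})=t(g^{-1})$) yields
\[
\lambda_{h^{-1}g^{-1}}^0(\alpha)-\lambda_{h^{-1}}^0\lambda_{g^{-1}}^0(\alpha) \;=\; K_{\sigma}^{bas}(h^{-1},g^{-1})(dt)_{1_{a(p)}}(\alpha) \;=\; \beta,
\]
so the LHS equals $(d\delta)_{pgh}(\beta)$.

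\textbf{Step 2: Rewrite the RHS.} Set $u := (d\delta_p)_{1_{a(p)}}(\alpha) \in T_pP$. By \cref{eqn:simplified version of error of quasi action on TP},
\[
\tau_{gh}^0(u)-\tau_h^0\tau_g^0(u) \;=\; 0 \cdot l_{gh}\bigl(K_{\sigma}^{bas}(h^{-1},g^{-1})(da)_p(u)\bigr).
\]
Here I need the subsidiary identity $(da)_p(u) = (dt)_{1_{a(p)}}(\alpha)$. This follows by differentiating the elementary relation $a\bigl(\delta_p(h)\bigr) = a(ph^{-1}) = s(h^{-1}) = t(h)$, i.e.\ $a \circ \delta_p = t|_{s^{-1}(a(p))}$, so $(da)_p \circ (d\delta_p)_{1_{a(p)}} = (dt)_{1_{a(p)}}$ on $A_{a(p)}$. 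Substituting gives that the RHS equals $0_p \cdot l_{gh}(\beta)$ with the same $\beta$ as in Step 1.

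\textbf{Step 3: The key equivariance identity.} It remains to prove $(d\delta)_{pgh}(\beta) = 0_p \cdot l_{gh}(\beta)$. Choose a smooth curve $\zeta(t) \subset t^{-1}(a(pgh))$ with $\zeta(0) = 1_{a(pgh)}$ and $\zeta'(0) = (di)_{1_{a(pgh)}}(\beta)$. By \cref{eqn:description of fundamental vector field map}, $(d\delta)_{pgh}(\beta)$ is represented by the curve $t \mapsto pgh \cdot \zeta(t)$. By associativity (condition (2) in \cref{defn:right action of lie groupoid}) we may rewrite $pgh \cdot \zeta(t) = p \cdot \bigl(gh \cdot \zeta(t)\bigr)$. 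The inner curve $\xi(t) := L_{gh}(\zeta(t))$ satisfies $\xi(0) = gh$ and, by \cref{eqn:description of left translations at tangent level}, $\xi'(0) = (dL_{gh})_{1}(di)_{1}(\beta) = l_{gh}(\beta)$. Hence
\[
(d\delta)_{pgh}(\beta) \;=\; \tfrac{d}{dt}\bigr|_{0} p\cdot \xi(t) \;=\; d\mu_{(p,gh)}\bigl((p,0),(gh,l_{gh}(\beta))\bigr) \;=\; 0_p\cdot l_{gh}(\beta),
\]
completing the proof.

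\textbf{Where the difficulty lies.} The conceptual content of the proposition is entirely concentrated in Step 3: the fundamental vector field map is compatible with left translation, which morally says ``infinitesimal multiplication on the right commutes with large multiplication on the left''. The bookkeeping, however, is delicate: one must reliably track the many anchor/source/target constraints, the convention $u\cdot v := d\mu((u,v))$ from \cref{defn:right action of lie groupoid}, and the inverse in the definition $\delta_p(h) = ph^{-1}$, which is precisely what forces the appearance of $dt$ in place of $ds$ and of $l_{gh}$ (not $r_{gh}$) in the final identity. Step 2's verification that $(da)_p \circ (d\delta_p)_{1_{a(p)}} = (dt)_{1_{a(p)}}$ is the small but crucial cog that matches the $K^{bas}$ on the two sides.
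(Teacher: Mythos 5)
Your proposal is correct and follows essentially the same route as the paper: both sides are reduced, via \cref{eqn:error of quasi action on A}, \cref{eqn:simplified version of error of quasi action on TP} and the identity $(da)_p\circ(d\delta_p)_{1_{a(p)}}=(dt)_{1_{a(p)}}$, to the common expression $0.l_{gh}K_{\sigma}^{bas}(h^{-1},g^{-1})(dt)_{1_{a(p)}}(\alpha)$. Your Step 3 curve argument is just a repackaging of the paper's chain of tangent-level manipulations using \cref{diagram:multiplicative action of torsor at tangent space level} and \cref{eqn:description of left translations at tangent level}, so the underlying mathematics is identical.
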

\begin{proof}
Recall that, from \cref{eqn:description of fundamental vector field map}
\begin{equation}
\begin{split}
    (d\delta_p)_{1_{a(p)}}(\alpha) ={} &  \bigl(p.1_{a(p)},0.(di)_{1_{a(p)}}(\alpha)\bigr).
\end{split}
\end{equation}
Hence,  
\begin{equation}\label{eqn:computation of fundamental vector field on an error of A} 
\begin{split}
(d\delta_{pgh})_{1_{a(pgh)}}  \Bigl(\lambda_{h^{-1}g^{-1}}^0(\alpha)-\lambda&_{h^{-1}}^0\lambda_{g^{-1}}^0(\alpha)\Bigr)\\
={} & \Bigl((pgh)1_{a(pgh)},\underbrace{0}_{=0.0}.(di)_{1_{a(pgh)}}\bigl(\lambda_{h^{-1}g^{-1}}^0(\alpha)-\lambda_{h^{-1}}^0\lambda_{g^{-1}}^0(\alpha)\bigr)\Bigr) \\
={} & \biggl((p(gh))1_{a(pgh)},\underbrace{\Bigl(0.0\Bigr).\Bigl((di)_{1_{a(pgh)}}\bigl(\lambda_{h^{-1}g^{-1}}^0(\alpha)-\lambda_{h^{-1}}^0\lambda_{g^{-1}}^0(\alpha)\bigr)\Bigr)}\biggr)\\
&[{\rm The \, underbraced \, term \,}=0.\Bigl(0.(di)_{1_{a(pgh)}}\bigl(\lambda_{h^{-1}g^{-1}}^0(\alpha)-\lambda_{h^{-1}}^0\lambda_{g^{-1}}^0(\alpha)\bigr)\Bigr) \\
& {{\rm by \, using \, \cref{diagram:multiplicative action of torsor at tangent space level} \, for \, the \, triple} \, (p,gh,1_{a(pgh)})}] \\
={} & \biggl(p(gh1_{\underbrace{a(pgh)}_{s(gh)}}, 0.\Bigl(0.(di)_{1_{\underbrace{a(pgh)}_{s(gh)}}}\bigl(\underbrace{\lambda_{h^{-1}g^{-1}}^0(\alpha)-\lambda_{h^{-1}}^0\lambda_{g^{-1}}^0(\alpha)}_{K_{\sigma}^{bas}(h^{-1}, g^{-1}(dt)_{1_{t(g)}}(\alpha)\, \rm by \, \cref{eqn:error of quasi action on A}}\bigr)\Bigr)\biggr) \\
={} & \Bigl(p(gh1_{s(gh)}), 0.\bigl(\underbrace{0.(di)_{1_{s(gh)}}} K_{\sigma}^{bas}(h^{-1}, g^{-1})(dt)_{1_{t(g)}}(\alpha)\bigr)\Bigr) \\
& [{\rm The \, underbraced \, term \,}=(dL_{gh})_{1_{s(gh)}}(di)_{1_{s(gh)}} \, \rm by \, \cref{eqn:description of left translations at tangent level}]\\
={} & \Bigl(pgh,0.\bigl(\underbrace{(dL_{gh})_{1_{s(gh)}}(di)_{1_{s(gh)}}}_{=l_{gh} \, \rm by \, \cref{eqn:description of left translations at tangent level}}K_{\sigma}^{bas}(h^{-1},g^{-1})(dt)_{1_{t(g)}}(\alpha)\bigr)\Bigr)\\
={} & \bigl(pgh,0.l_{gh}K_{\sigma}^{bas}(h^{-1},g^{-1}))(dt)_{1_{t(g)}}(\alpha)\bigr).
\end{split}
\end{equation}
On the other hand, substituting $(d\delta_p)_{1_{a(p)}}(\alpha)$ in \cref{eqn:simplified version of error of quasi action on TP}, we get
\begin{equation}\label{eqn:preservaton of error before computing a comp delta}
\begin{split}
\tau_{gh}^0\Bigl((d\delta_p)_{1_{a(p)}}&(\alpha)\Bigr)-\tau_h^0\tau_g^0\Bigl((d\delta_p)_{1_{a(p)}}(\alpha)\Bigr)  \\
   ={} & \bigl(pgh, 0.l_{gh}K_{\sigma}^{bas}(h^{-1},g^{-1})\underbrace{(da)_p(d\delta_p)_{1_{a(p)}}(\alpha)}\bigr).
\end{split}
\end{equation}
The vector in the under-braced  term can be computed as follows:
\begin{equation}\label{eqn:computation of a comp delta}
\begin{split}
(da)_p(d\delta_p)_{1_{a(p)}}(\alpha) = {} & \underbrace{(da)_p\bigl(p1_{a(p)}, 0.(di)_{1_{a(p)}}(\alpha))}_{}\\
&[ {\rm The\,\, underbraced\,\, term}=(ds)_{1_{a(p)}}(di)_{1_{a(p)}}(\alpha), {\rm \, by \, taking \,\, differential}\\ 
& {\rm of \, condition (2)\, in\, \cref{defn:right action of lie groupoid}\, 
\, for \, the \, tuple} \, \bigl(0,(di)_{1_{a(p)}}(\alpha)\bigr)]\\
={} & \underbrace{(ds)_{1_{a(p)}} (di)_{1_{a(p)}}}_{ds \circ di=dt}(\alpha) \\
={} & (dt)_{1_{a(p)}}(\alpha).
\end{split}
\end{equation}
So, inserting \cref{eqn:computation of a comp delta} in \cref{eqn:preservaton of error before computing a comp delta} we get that,
\begin{equation}\label{eqn:computation of tau on error coming from A}
\begin{split}
\tau_{gh}^0\Bigl(p,(d\delta_p)_{1_{a(p)}}(\alpha)\Bigr) & -\tau_h^0\tau_g^0\Bigl(p,(d\delta_p)_{1_{a(p)}}(\alpha)\Bigr) \\
={} &  \Bigl(pgh,0.l_{gh}K_{\sigma}^{bas}(h^{-1},g^{-1})(dt)_{1_{a(p)}}(\alpha)\Bigr).
\end{split}
\end{equation}
Comparing \cref{eqn:computation of fundamental vector field on an error of A} and \cref{eqn:computation of tau on error coming from A}, we draw the necessary conclusion.
\end{proof}

The significance of the above result lies in the following corollary.
\begin{cor}\label{Cor:ddelta for objects}
We have a  smooth map  ${\bar d}\delta\colon {\rm Obj}(\AdP)\to {\rm Obj}(\AtP)$ given by $[p, \alpha, v]\mapsto [p,(d\delta_p)_{1_{a(p)}}(\alpha), v].$
\end{cor}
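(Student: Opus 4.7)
The plan is to verify two things: first, that the assignment $(p,\alpha,v)\mapsto \bigl(p,(d\delta_p)_{1_{a(p)}}(\alpha),v\bigr)$ descends to a well-defined map on the quotients defining $\mathrm{Obj}(\AdP)$ and $\mathrm{Obj}(\AtP)$, and second, that the induced map is diffeologically smooth. The deep content is already isolated in \cref{prop:fundamental vector field map preserve error of quasi action}; the rest is a packaging exercise using the universal property of the quotient diffeology.

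For well-definedness, I would proceed as follows. Two representatives $(p,\alpha,v)\sim (p,\alpha',v')$ in $P\times_{X_0}A\oplus a^*TX_0$ exactly when the differences $\alpha-\alpha'$ and $v-v'$ decompose as finite sums of errors of the quasi actions $\lambda^0$ and $\lambda^1$, as in \cref{eqn: equivalance relation on P times A plus TX_0}, with each summand attached to a factorisation $p=p_ig_ih_i$. The $TX_0$ coordinate is untouched by the candidate map, so the $\lambda^1$-errors are carried to themselves (which are errors of $\tau^1$ on $a^*TX_0$ by \cref{eqn: error of quasi action on TX_0=dt comp basic curvature}, or more directly by the identical formulae of $\lambda^1$ and $\tau^1$). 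For the $TP$ coordinate, fibrewise $\mathbb{R}$-linearity of $(d\delta_p)_{1_{a(p)}}$ reduces the check to one summand at a time, and \cref{prop:fundamental vector field map preserve error of quasi action} is exactly the statement that $d\delta$ maps a single $\lambda^0$-error at $p$ to the corresponding $\tau^0$-error at $p$. Hence the image is equivalent to itself as we vary the representative, and $\bar{d\delta}$ is well defined set-theoretically.

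For smoothness, I would lift to the total spaces. The map $\widetilde{d\delta}\colon P\times_{X_0}A\oplus a^*TX_0 \to TP\oplus a^*TX_0$ sending $(p,\alpha,v)$ to $\bigl(p,(d\delta_p)_{1_{a(p)}}(\alpha),v\bigr)$ is smooth between ordinary smooth manifolds, since $d\delta$ is a smooth vector bundle map. Write $q_{\AdP}$ and $q_{\AtP}$ for the respective quotient maps; then $\bar{d\delta}\circ q_{\AdP}=q_{\AtP}\circ \widetilde{d\delta}$ by construction. Given a plot $f\colon U\to \mathrm{Obj}(\AdP)$, the definition of the quotient diffeology (\cref{example:quotient diffeology}) provides, around each $u\in U$, a neighbourhood $V$ and a smooth lift $\tilde f\colon V\to P\times_{X_0}A\oplus a^*TX_0$ with $f|_V=q_{\AdP}\circ\tilde f$. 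Then $\bar{d\delta}\circ f|_V = q_{\AtP}\circ \widetilde{d\delta}\circ \tilde f$, which is of the required form to be a plot, so $\bar{d\delta}\circ f$ is locally, hence globally, a plot of $\mathrm{Obj}(\AtP)$.

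I do not anticipate a genuine obstacle: the only non-formal input is \cref{prop:fundamental vector field map preserve error of quasi action}, already proved. The point worth flagging is that the equivalence relation is defined via sums of errors indexed by different triples $(p_i,g_i,h_i)$, so one must invoke fibrewise linearity of $(d\delta_p)_{1_{a(p)}}$ to reduce to the single-error statement; this is immediate from the definition of $d\delta$ as a vector bundle map. All remaining assertions are routine applications of the diffeological quotient universal property.
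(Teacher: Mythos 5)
Your proposal is correct and follows essentially the same route as the paper: well-definedness is deduced from \cref{prop:fundamental vector field map preserve error of quasi action} together with fibrewise linearity of $(d\delta_p)_{1_{a(p)}}$ (the paper cites the proposition and the two equivalence relations and leaves the linearity point implicit), and smoothness is the standard lift-a-plot argument for the quotient diffeology, which the paper dismisses as a straightforward verification.
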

\begin{proof}
 Well-definedness of the map directly follows from \cref{eq:ddelta preserves error} and the equivalence relations defined respectively in \cref{Subsection:Groupoid AdP} and \cref{Subsection:Groupoid AtP}.   The smoothness of $\bar d\delta$ between diffeological spaces is a straightforward verification. 
\end{proof}

In fact, the map $\bar d\delta$ extends to a functor between the categories ${\bar d}\delta\colon \AdP\to \AtP.$ We prove this in the following proposition. 
Also, recall that $\psi$ is the right splitting of \cref{sequence diagram:short exact sequence with left translations} defined as 
\begin{equation}\label{eqn:description of left splitting of Ehresmann connection}
 \psi(g,u)=\bigl(1_{t(g)},(dR_{g^{-1}})_g({\rm{pr}}_2)_g(u)\bigr),
\end{equation}
where $({\rm pr}_2)_g(u)$ denotes the ${\rm ker} \, ds$ component of $u$ with respect to the splitting $TX_1=\mathbb{H} \oplus {\rm ker} \, ds.$
\begin{prop}[Equivariance of fundamental vector field]\label{thm:equvariance of fundamental vector field map}
 Let $\mathbb{H}$ be an Ehresmann connection on $\mbbX$ and $\lambda, \tau$  the quasi actions of $\mathbb{X}$ respectively on $A$ and $TP$ as in \cref{quasi action of X on A}, \cref{eqn:quasi action of X on TP}.  Then, the fundamental vector field  in \cref{eqn:fundamental vector field map} is an $\mathbb{X}$-equivariant map:
 \begin{equation}\label{eqn:X-equivariancy of ddelta}
 (d\delta_{pg})_{1_{a(pg)}}\bigl(\lambda_{g^{-1}}^0(\alpha)\bigr)=\tau_g^0\bigl((d\delta_p)_{1_{a(p)}}(\alpha)\bigr),
 \end{equation}
 for every $p \in P$ and $g \in X_1$  satisfying $a(p)=t(g)$ and $\alpha \in A_{t(g)}.$
\end{prop}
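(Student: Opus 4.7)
The plan is a path-based argument that realizes both sides of \cref{eqn:X-equivariancy of ddelta} as the tangent vector at $\epsilon=0$ of a single curve in $P$, obtained by two different factorizations. Choose a smooth curve $\beta\colon(-\epsilon_0,\epsilon_0)\to s^{-1}(t(g))$ with $\beta(0)=1_{t(g)}$ and $\beta'(0)=\alpha$, and horizontally lift $\epsilon\mapsto t(\beta(\epsilon))$ at $g$ with respect to the dual Ehresmann connection $\tilde{\mathbb H}=(di)\mathbb H$ complementing $\ker dt$ (\cref{Remark:Sequence with respect to target}); this yields a curve $\tilde\gamma$ in $X_1$ with $\tilde\gamma(0)=g$, $t(\tilde\gamma(\epsilon))=t(\beta(\epsilon))$, and $\tilde\gamma'(0)=\tilde\sigma_g(dt(\alpha))$, where $\tilde\sigma_g:=(di)_{g^{-1}}\circ\sigma_{g^{-1}}$. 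The crucial observation is that the compatible target condition makes $\zeta(\epsilon):=\tilde\gamma(\epsilon)^{-1}\beta(\epsilon)g$ a well-defined smooth curve in $s^{-1}(s(g))$ through $1_{s(g)}$, and a direct associativity check in $\mathbb X$ yields the identity of curves in $P$:
\[
p\cdot\beta(\epsilon)^{-1}\cdot\tilde\gamma(\epsilon)\;=\;pg\cdot\zeta(\epsilon)^{-1}.
\]

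Differentiating at $\epsilon=0$, the right-hand side is $(d\delta_{pg})_{1_{s(g)}}(\zeta'(0))$ by definition of the fundamental vector field map. For the left-hand side, a short computation using associativity of $d\mu$ and the identity $(da)_p\circ(d\delta_p)_{1_{a(p)}}=(dt)_{1_{a(p)}}$ (\cref{eqn:computation of a comp delta}) recasts it as $d\mu_{(p,g)}\bigl((d\delta_p)_{1_{a(p)}}(\alpha),\tilde\sigma_g(dt(\alpha))\bigr)=\tau_g^0\bigl((d\delta_p)_{1_{a(p)}}(\alpha)\bigr)$. Since $d\delta_{pg}$ restricts to an isomorphism $A_{s(g)}\to\ker(d\pi)_{pg}$, the proposition reduces to the identity $\zeta'(0)=\lambda_{g^{-1}}^0(\alpha)$ in $A_{s(g)}$.

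To obtain this last identity, differentiate $\zeta$ as a triple product. Using $(di)_g\tilde\sigma_g(v)=\sigma_{g^{-1}}(v)$ one finds $\zeta'(0)=\bigl(\sigma_{g^{-1}}(dt(\alpha))\cdot\alpha\bigr)\cdot 0_g$ in the tangent groupoid; since the middle factor lies in $\ker(ds)_{g^{-1}}$, the final composition with $0_g$ acts as $\psi_{g^{-1}}$. The splitting $r\circ\psi+\sigma\circ ds={\rm Id}$ of \cref{eqn:relation between three definition of connections on lie groupoid} applied to $l_{g^{-1}}(\alpha)$ produces the crucial decomposition $\sigma_{g^{-1}}(dt(\alpha))=l_{g^{-1}}(\alpha)+r_{g^{-1}}(\lambda_{g^{-1}}^0(\alpha))$. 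Combining this with the unit-level identity $(di)(\alpha)\cdot\alpha=0$ and the $\mathbb R$-linearity of $dm$ over its composable fibre product then yields $\sigma_{g^{-1}}(dt(\alpha))\cdot\alpha=\sigma_{g^{-1}}(dt(\alpha))-l_{g^{-1}}(\alpha)$; applying $\psi_{g^{-1}}$ together with $\psi\circ\sigma=0$ and $\psi_{g^{-1}}\circ l_{g^{-1}}=-\lambda_{g^{-1}}^0$ delivers $\zeta'(0)=\lambda_{g^{-1}}^0(\alpha)$.

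The main obstacle is that $dm$ on the tangent groupoid is linear only jointly in both slots over the composable subspace; fixing one slot it is merely affine, and in particular $X\cdot(-Y)\neq-(X\cdot Y)$ in general. The distributed expansion of $dm$ across the splitting of $\sigma_{g^{-1}}(dt(\alpha))$ therefore requires the joint decomposition $\alpha=d(1\circ t)(\alpha)+(\alpha-d(1\circ t)(\alpha))$, chosen so that its first summand pairs composably with $l_{g^{-1}}(\alpha)$ and its second (which lies in $\ker dt$) with $r_{g^{-1}}(\lambda_{g^{-1}}^0(\alpha))$; all other manipulations are routine applications of the Lie groupoid identities from \cref{subsection:Connection of a Lie groupoid}.
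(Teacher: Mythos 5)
Your proposal is correct, and it reorganizes the argument in a genuinely different way even though it converges on the same key identity as the paper. The paper computes $(d\delta_{pg})_{1_{a(pg)}}(\lambda^0_{g^{-1}}\alpha)$ and $\tau^0_g\bigl((d\delta_p)_{1_{a(p)}}\alpha\bigr)$ separately through repeated applications of the tangent-groupoid identities and shows their difference equals $0.\bigl(-(di)_{g^{-1}}\bigl[({\rm pr}_2)_{g^{-1}}l_{g^{-1}}(\alpha)+\sigma_{g^{-1}}(dt\,\alpha).\alpha\bigr]\bigr)$; your curve identity $p\,\beta(\epsilon)^{-1}\tilde\gamma(\epsilon)=pg\,\zeta(\epsilon)^{-1}$ (immediate from associativity of the action) packages all of that $d\mu$-juggling into a single differentiation and reduces the proposition to $\zeta'(0)=\lambda^0_{g^{-1}}(\alpha)$, using only $(da)_p\circ(d\delta_p)_{1_{a(p)}}=(dt)_{1_{a(p)}}$ (\cref{eqn:computation of a comp delta}) and the injectivity of $d\delta_{pg}$ on $A_{a(pg)}$. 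From that point on the two proofs coincide in substance: your target identity $\sigma_{g^{-1}}(dt\,\alpha).\alpha=\sigma_{g^{-1}}(dt\,\alpha)-l_{g^{-1}}(\alpha)$ is precisely the vanishing of the paper's under-braced term $({\rm pr}_2)_{g^{-1}}l_{g^{-1}}(\alpha)+\sigma_{g^{-1}}(dt\,\alpha).\alpha$, since $({\rm pr}_2)_{g^{-1}}l_{g^{-1}}(\alpha)=l_{g^{-1}}(\alpha)-\sigma_{g^{-1}}(dt\,\alpha)$ by the splitting \cref{eqn:relation between three definition of connections on lie groupoid}, and your unit-level input $(di)(\alpha).\alpha=0$ is, given the differentiated unit laws, equivalent to \cref{lem:crucial lemma proving unitality of quasi action} (it does need its one-line justification — differentiate $i(h)h=1_{s(h)}$ along $s^{-1}(x)$ and use $ds(\alpha)=0$ — but that is arguably simpler than the paper's foliation argument for the lemma). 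What your route buys is conceptual clarity: the equivariance appears as two factorizations of one parallel-transport-type curve in $P$, with the entire error localized in the single vector $\zeta'(0)$, at the modest cost of invoking a (local) lift of $t\circ\beta$ through $g$ with initial velocity in $\tilde{\mathbb H}_g=(di)_{g^{-1}}\mathbb{H}_{g^{-1}}$, which exists because $dt$ restricts to an isomorphism there. The composability and affineness caveats you raise about $dm$ are real, and your decomposition $\alpha=d(1\circ t)(\alpha)+\bigl(\alpha-d(1\circ t)(\alpha)\bigr)$ handles them correctly; I verified that each bilinear expansion you use pairs composable slots.
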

\begin{proof}
We will evaluate the LHS and RHS separately and prove that their difference is zero.
\begin{equation}  \label{eqn:LHS before computing dL comp di comp dR}
\begin{split}
(d\delta_{pg})_{1_{a(pg)}}\bigl(\lambda_{g^{-1}}^0(\alpha)\bigr) = {} & \Bigl((pg)\underbrace{1_{a(pg)}}_{1_{s(g)}},\underbrace{0}_{=0.0}.(di)_{1_{a(pg)}}\bigl(\lambda_{g^{-1}}^0(\alpha)\bigr)\Bigr)\\
={} & \biggl(\bigl(p(g)\bigr)1_{s(g)},\underbrace{\Bigl(0.0\Bigr).\Bigl((di)_{1_{s(g)}}\bigl(\lambda_{g^{-1}}(\alpha)\bigr)\Bigr)}\biggr) \\
& [{\rm The \, undebraced\,\, term}=0.\Big(0.(di)_{1_{s(g)}}\bigl(\lambda_{g^{-1}}^0(\alpha)\bigr)\Bigr), {\rm \, by \, \cref {diagram:multiplicative action of torsor at tangent space level}}\\ 
& {\rm \, for \, the \, tuple} \, (p,g,1_{s(g)})]\\
={} & \biggl(p(g1_{s(g)}),0.\Bigl(\underbrace{0.(di)_{1_{s(g)}}}\bigl(\lambda_{g^{-1}}^0(\alpha)\bigr)\Bigr)\biggr) \\
& [{\rm The \, underbraced \, term} =(dL_g)_{1_{s(g)}}(di)_{1_{s(g)}} \, {\rm by \, \cref{eqn:description of left translations at tangent level}}\\
={} & \biggl(pg,0.\Bigl((dL_g)_{1_{s(g)}}(di)_{1_{s(g)}}\bigl(\underbrace{\lambda_{g^{-1}}}^0(\alpha)\bigr)\Bigr)\biggr) \\
& [{\rm The \, underbraced \, term \, }=-\psi_{g^{-1}}l_{g^{-1}} \, {\rm by \, \cref{quasi action of X on A}}] \\
   ={} & \biggl(pg,0.\Bigl((dL_g)_{1_{s(g)}}(di)_{1_{s(g)}}-\bigl(\underbrace{\psi_{g^{-1}}}l_{g^{-1}}(\alpha)\bigr)\Bigr)\biggr) \\
& [{\rm The \, underbraced \, term \, }=(dR_g)_{g^{-1}}({\rm{pr}}_2)_{g^{-1}} \, {\rm{by \, \cref{eqn:description of left splitting of Ehresmann connection}}}] \\
  ={} & \biggl(pg,0.\Bigl(\underbrace{(dL_g)_{1_{s(g)}}(di)_{1_{s(g)}}\bigl(-(dR_g)_{g^{-1}}}({\rm{pr}}_2)_{g^{-1}}l_{g^{-1}}(\alpha)\bigr)\Bigr)\biggr). 
\end{split}
\end{equation}
The underbraced expression in the last line can be further simplified by differentiating the identity 
$L_g \circ  i \circ R_{g^{-1}} =i.$
We re-express  \cref{eqn:LHS before computing dL comp di comp dR}  as
\begin{equation}\label{eqn:LHS after computing dL comp di comp dR}
    \begin{split}(d\delta_{pg})_{1_{a(pg)}}\bigl(\lambda_{g^{-1}}^0(\alpha)\bigr) = {} & \Bigl(pg, 0.\bigl(-(di)_{g^{-1}}({\rm{pr}}_2)_{g^{-1}}l_{g^{-1}}(\alpha)\bigr)\Bigr).
    \end{split}
\end{equation}
Using  \cref{eqn:quasi action of X on TP}, one derives for the right-hand side of the above equation
\begin{equation}\label{eqn:RHS of equivariance theorem}
\begin{split}
    \tau_g^0\bigl((d\delta_p)_{1_{a(p)}}(\alpha)\bigr) ={} & \Bigl(pg,(d\delta_p)_{1_{a(p)}}(\alpha).(di)_{g^{-1}}\sigma_{g^{-1}}\underbrace{(da)_p(d\delta_p)_{1_{a(p)}}}_{=(dt)_{1_{a(p)}} \, \rm by \, \cref{eqn:computation of a comp delta}}(\alpha)\Bigr) \\
={} & \bigl(pg,\underbrace{(d\delta_p)_{1_{a(p)}}(\alpha)}.(di)_{g^{-1}}\sigma_{g^{-1}}(dt)_{1_{a(p)}}(\alpha)\bigr)\\
&[{\rm The \, underbraced \, term \,}=0.(di)_{1_{a(p)}}(\alpha) \, {\rm by \, \cref{eqn:description of fundamental vector field map}}] \\
={} & \Bigl((p1_{a(p)})g,\bigl(\underbrace{0.(di)_{1_{a(p)}}(\alpha)\bigr).\bigl((di)_{g^{-1}}\sigma_{g^{-1}}(dt)_{1_{a(p)}}(\alpha)\bigr)}\Bigr) \\
& [{\rm The \, underbraced \, term \,}=0.\bigl((di)_{1_{a(p)}}(\alpha).(di)_{g^{-1}}\sigma_{g^{-1}}(dt)_{1_{a(p)}}(\alpha)\bigr) \\
& {{\rm by \, using \, \cref{diagram:multiplicative action of torsor at tangent space level} \, for \, the \, tuple} \, (p,1_{a(p)},g)}] \\
={} & \Bigl(p(1_{a(p)}g),0.\bigl(\underbrace{(di)_{1_{a(p)}}(\alpha).(di)_{g^{-1}}\sigma_{g^{-1}}(dt)_{1_{a(p)}}(\alpha)}\bigr)\Bigr) \\
& [{\rm The \, underbrace \, term \,}=(di)_{g^{-1}}\bigl(\sigma_{g^{-1}}(dt)_{1_{a(p)}}(\alpha).\alpha\bigr) \\
& {{\rm by \, using \, \cref{diagram:relation of dm and di at the tangent level} \, for \, the \, tuple} \, (g^{-1},1_{a(p)})}] \\
={} & \biggl(pg,0.\Bigl((di)_{g^{-1}}\bigl(\sigma_{g^{-1}}(dt)_{1_{a(p)}}(\alpha).\alpha\bigr)\Bigr)\biggr). 
\end{split}
\end{equation}
Now, LHS (\cref{eqn:LHS after computing dL comp di comp dR})-RHS (\cref{eqn:RHS of equivariance theorem}) gives,
\begin{equation}\label{eqn:LHS-RHS}
\begin{split}
    (d\delta_{pg})&_{1_{a(pg)}}\bigl(\lambda_{g^{-1}}^0(\alpha)\bigr)-\tau_g^0\bigl((d\delta_p)_{1_{a(p)}}(\alpha)\bigr) \\
    = {} & \Biggl(pg,0.\biggl(-(di)_{g^{-1}}\Bigl(\underbrace{({\rm{pr}}_2)_{g^{-1}}l_{g^{-1}}(\alpha)+\bigl(\sigma_{g^{-1}}(dt)_{1_{a(p)}}(\alpha).\alpha\bigr)}\Bigr)\biggr)\Biggr).
\end{split}
\end{equation}
The rest of the proof will be dedicated to showing that the under-braced term on the right-hand side of \cref{eqn:LHS-RHS} vanishes. 

Recall from \cref{eqn:relation between three definition of connections on lie groupoid}, we have that $r \circ \psi +\sigma \circ ds={\rm Id}$.   That is, for each $(g,x) \in TX_1$, one has $r_{g} \circ \psi_g(x)+\sigma_g \circ (ds)_g(x)=x.$  In particular, we obtain the following
\begin{equation}
    \begin{split}
        l_{g^{-1}}(\alpha) = {} & r_{g^{-1}} \circ \underbrace{\psi_{g^{-1}}}\bigl(l_{g^{-1}}(\alpha)\bigr)+\sigma_{g^{-1}} \circ (ds)_{g^{-1}}\bigl(l_{g^{-1}}(\alpha)\bigr) \\
        &[{\rm The \, underbraced \, term \,}=(dR_{g})_{g^{-1}}({\rm{pr}}_2)_{g^{-1}} \, {\rm by \, \cref{eqn:description of left splitting of Ehresmann connection}}] \\
        ={} & \underbrace{r_{g^{-1}}} \circ (dR_g)_{g^{-1}}({\rm{pr}}_2)_{g^{-1}}\bigl(l_{g^{-1}}(\alpha)\bigr)+\sigma_{g^{-1}} \circ (ds)_{g^{-1}}\bigl(l_{g^{-1}}(\alpha)\bigr) \\
        & [{\rm The \, underbraced \, term \,}=(dR_{g^{-1}})_{1_{t(g^{-1})}} \, {\rm by \, \cref{equation:small r map}}] \\
        ={} & \underbrace{(dR_{g^{-1}})_{1_{t(g^{-1})}} \circ (dR_g)_{g^{-1}}}_{\rm Id \, by \, \cref{eqn:Inverse of dR_g}}\bigl(({\rm{pr}}_2)_{g^{-1}}\bigr)\bigl(l_{g^{-1}}(\alpha)\bigr)+\sigma_{g^{-1}} \circ (ds)_{g^{-1}}\bigl(\underbrace{l_{g^{-1}}(\alpha)}\bigr) \\
        & [{\rm The \, underbraced \, term \,}=(dL_{g^{-1}})_{1_{s(g^{-1})}}(di)_{1_{s(g^{-1})}}(\alpha) \, {\rm by \, \cref{eqn:description of left translations at tangent level}}] \\
        ={} & ({\rm{pr}}_2)_{g^{-1}}\bigl(l_{g^{-1}}(\alpha)\bigr)+\sigma_{g^{-1}} \circ \underbrace{(ds)_{g^{-1}}(dL_{g^{-1}})_{1_{s(g^{-1})}}(di)_{1_{s(g^{-1})}}(\alpha)}_{=dt \, {\rm by \, differentials \, of \, } \, s \circ L_g \circ i=t} \\
        ={} & ({\rm{pr}}_2)_{g^{-1}}\bigl(l_{g^{-1}}(\alpha)\bigr) + \sigma_{g^{-1}}(dt)_{1_{s(g^{-1})}}(\alpha).
    \end{split}
\end{equation}
Hence, the under-braced term in \cref{eqn:LHS-RHS} can be expressed as:
\begin{equation}
    \begin{split}
        ({\rm{pr}}_2)_{g^{-1}}l_{g^{-1}}(\alpha)&+\bigl(\sigma_{g^{-1}}(dt)_{1_{a(p)}}(\alpha).\alpha\bigr) \\
        ={} & \underbrace{l_{g^{-1}}(\alpha)}-\sigma_{g^{-1}}(dt)_{1_{s(g^{-1})}}(\alpha)+\bigl(\sigma_{g^{-1}}(dt)_{1_{a(p)}}(\alpha).\alpha\bigr) \\
        & [{\rm The \, underbraced \, term \,}=\bigl(0.(di)_{1_{s(g^{-1})}}(\alpha)\bigr) \, {\rm by \, \cref{eqn:description of left translations at tangent level}}] \\
        ={} & \bigl(0.(di)_{1_{s(g^{-1})}}(\alpha)\bigr)-\underbrace{\sigma_{g^{-1}}(dt)_{1_{s(g^{-1})}}(\alpha)}  +\bigl(\sigma_{g^{-1}}(dt)_{1_{a(p)}}(\alpha).\alpha\bigr) \\
        & [{\rm The \, underbraced \, term \,}=\bigl(\sigma_{g^{-1}}(dt)_{1_{s(g^{-1})}}(\alpha).(d(1 \circ t))_{1_{s(g^{-1})}}(\alpha)\bigr) \, {\rm by \, \cref{prop:measure of failure of g and unit of s(g)}}] \\
        ={} & \bigl(0.(di)_{1_{s(g^{-1})}}(\alpha)\bigr)-\bigl(\sigma_{g^{-1}}(dt)_{1_{s(g^{-1})}}(\alpha).(d1 \circ t)_{1_{s(g^{-1})}}\alpha\bigr)+\bigl(\sigma_{g^{-1}}(dt)_{1_{\underbrace{a(p)}_{s(g^{-1})}}}(\alpha).\alpha\bigr)\\
        ={} & \bigl(0.(di)_{1_{s(g^{-1})}}(\alpha)-d(1 \circ t)_{1_{s(g^{-1})}}(\alpha)+\alpha\bigr).
    \end{split}
\end{equation}
So, to prove the theorem, it suffices to prove that the vector $(d1 \circ t)_{1_{s(g^{-1})}}(\alpha)-(di)_{1_{s(g^{-1})}}(\alpha)-\alpha$ is zero which follows from \cref{lem:crucial lemma proving unitality of quasi action}. 
\end{proof}
\begin{lem}\label{lem:crucial lemma proving unitality of quasi action}
    Let $\mathbb{X}: X_1 \rightrightarrows X_0$ be a Lie groupoid.  Then, $d(1 \circ t)_{1_x}(\alpha)-(di)_{1_x}(\alpha)-\alpha=0$, for every $\alpha \in \ker \, (ds)_{1_x}$ and $x \in X_0.$
\end{lem}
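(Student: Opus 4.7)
My plan is to derive the identity from the groupoid inversion law $m(g, i(g)) = 1_{t(g)}$ by differentiating at $g = 1_x$ in the direction of $\alpha \in \ker(ds)_{1_x}$. Using the chain rule together with the notation of \cref{notation:differential of composition map}, this will immediately give
\[
d(1 \circ t)_{1_x}(\alpha) \,=\, dm_{(1_x, 1_x)}\bigl(\alpha,\, (di)_{1_x}(\alpha)\bigr) \,=\, \alpha \cdot (di)_{1_x}(\alpha),
\]
where tangent composability is automatic since $ds(\alpha) = 0$ and $dt \circ di = ds$ forces $dt((di)_{1_x}(\alpha)) = 0$. After this reduction, the whole problem reduces to identifying the tangent groupoid product $\alpha \cdot (di)_{1_x}(\alpha)$ with the ordinary vector sum $\alpha + (di)_{1_x}(\alpha)$.

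The core of the argument, and what I expect to be the main technical hurdle, is an auxiliary formula for the differential of $m$ at a composable pair of units: for every $\xi, \eta \in T_{1_x}X_1$ with $ds(\xi) = dt(\eta) =: v$, I claim
\[
\xi \cdot \eta \,=\, \xi + \eta - (d1)_x(v).
\]
To prove this I will use that $dm_{(1_x,1_x)}$ is a linear map on the fibre product $T_{1_x}X_1 \times_{T_xX_0} T_{1_x}X_1$, combined with the three baseline identities obtained by differentiating the unit laws: $dm_{(1_x,1_x)}(\xi,\, (d1)_x(ds\xi)) = \xi$ from $m(g, 1_{s(g)}) = g$; $dm_{(1_x,1_x)}((d1)_x(dt\eta),\, \eta) = \eta$ from $m(1_{t(h)}, h) = h$; and $dm_{(1_x,1_x)}((d1)_x(v),\, (d1)_x(v)) = (d1)_x(v)$ from varying $x$ in $m(1_x, 1_x) = 1_x$. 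The key decomposition will be
\[
(\xi, \eta) \,=\, \bigl(\xi - (d1)_x(ds\xi),\, 0\bigr) \,+\, \bigl((d1)_x(v),\, (d1)_x(v)\bigr) \,+\, \bigl(0,\, \eta - (d1)_x(dt\eta)\bigr),
\]
with each summand a composable pair in $T_{1_x}X_1 \times_{T_xX_0} T_{1_x}X_1$ (the complementary pieces lie in $\ker(ds)_{1_x}$ and $\ker(dt)_{1_x}$ respectively). Applying $dm_{(1_x,1_x)}$ summand by summand and collecting terms will yield the auxiliary formula.

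Finally I will apply this formula to $\xi = \alpha$ and $\eta = (di)_{1_x}(\alpha)$. Since $v = ds(\alpha) = 0$, the correction term $(d1)_x(v)$ vanishes and the formula collapses to $\alpha \cdot (di)_{1_x}(\alpha) = \alpha + (di)_{1_x}(\alpha)$. Combined with the first paragraph, this gives $d(1 \circ t)_{1_x}(\alpha) = \alpha + (di)_{1_x}(\alpha)$, which is the desired identity. The only delicate point is the linearity/decomposition step in the auxiliary formula; everything else is bookkeeping with the standard identities $ds \circ d1 = dt \circ d1 = \mathrm{Id}$ and $dt \circ di = ds$.
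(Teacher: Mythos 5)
Your proof is correct, but it takes a genuinely different route from the one in the paper. The paper argues geometrically: it shows that the vector $w = d(1 \circ t)_{1_x}(\alpha)-(di)_{1_x}(\alpha)-\alpha$ lies in the tangent space at $1_x$ of the isotropy group $s^{-1}(x)\cap t^{-1}(x)$ (using the foliation of $s^{-1}(x)$ by $\ker ds \cap \ker dt$ quoted from \cite{moerdijk2003introduction}) and is invariant under $(di)_{1_x}$; since inversion on a Lie group acts as $-\mathrm{Id}$ on its Lie algebra, $w=-w=0$. You instead differentiate the inversion law $m(g,i(g))=1_{t(g)}$ at $1_x$ and reduce everything to the identity $dm_{(1_x,1_x)}(\xi,\eta)=\xi+\eta-(d1)_x(v)$ for $\xi,\eta\in T_{1_x}X_1$ with $ds(\xi)=dt(\eta)=v$, which you establish by decomposing $(\xi,\eta)$ into three pairs lying in $T_{1_x}X_1\times_{T_xX_0}T_{1_x}X_1$ and invoking linearity of $dm_{(1_x,1_x)}$ on that fibre product together with the differentiated unit laws. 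All the steps check out: each summand in your decomposition does satisfy the tangent composability condition, the three baseline identities are exactly what is needed, and the correction term $(d1)_x(v)$ vanishes in the application because $ds(\alpha)=0$. Your argument is more elementary and self-contained---it avoids the appeal to the Lie group structure of the isotropy group and the foliation theorem---and it produces a reusable general formula for $dm$ at units of which the lemma is a special case; the paper's argument is shorter on the page but outsources the work to external structure theory.
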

\begin{proof}
    First of all $s^{-1}(x)$ is a closed submanifold of $X_1$. In \cite{moerdijk2003introduction}, author has proved that the distribution defined by $E_g={\text{ker}} \, (ds)_g \, \cap \, {\text{ker}} \, (dt)_g$ defines a foliation for the submanifold $s^{-1}(x)$.  Moreover, leaves of the above foliation are nothing but the connected components of fibres of the restricted target map $t_{\mkern 1mu \vrule height 2ex\mkern 2mus^{-1}(x)}\colon s^{-1}(x) \rightarrow X_1$. Since $1_x \in s^{-1}(x)$, there exists a submanifold $M$ $\bigl($a connected component of $t_{\mkern 1mu \vrule height 2ex\mkern 2mus^{-1}(x)}^{-1}(x)=s^{-1}(x) \cap t^{-1}(x)\bigr)$ such that $T_{1_x}M={\text{ker}} \, (ds)_{1_x} \, \cap \, {\text{ker}} (dt)_{1_x}$.  Since, $M$ is a connected component of $s^{-1}(x) \, \cap \, t^{-1}(x)$, we also have that $T_{1_x}M=T_{1_x}\bigl(s^{-1}(x) \, \cap \, t^{-1}(x)\bigr)$.  In the same theorem in \cite{moerdijk2003introduction}, author has shown that the loop space $s^{-1}(x) \, \cap \, t^{-1}(x)$ concentrated at $x$ is indeed a Lie group.   The result follows after verification that the vector $d(1 \circ t)_{1_x}(\alpha)-(di)_{1_x}(\alpha)-\alpha$ lies inside the tangent space of $s^{-1}(x) \, \cap \, t^{-1}(x),$ and invariant under the map $(di)_{1_x}$. 
\end{proof}

\begin{prop}\label{prop: morphism between P times A and TP bundles upto homotopy}
    The following map $\bigl(d\delta,{\rm Id}\bigr):P \times_{X_0} A \oplus a^*TX_0 \rightarrow TP \oplus a^*TX_0$ given by $(p, \alpha, v) \mapsto (p, (d\delta_p)_{1_{a(p)}}(\alpha_, v)$ is a morphism of quasi equivairant bundles upto homotopy.
\end{prop}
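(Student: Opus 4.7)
The plan is to verify, one by one, the four compatibility conditions for a morphism of quasi equivariant $\mathbb{X}$-bundles up to homotopy listed just after \cref{defn: equivariant bundle upto homotopy}, applied to $\Phi_0 = d\delta$ and $\Phi_1 = {\rm Id}$. For the source bundle $P\times_{X_0}A \oplus a^*TX_0$ the structure maps are $\xi$, $\tau^0$, $\tau^1$, $\Upsilon$ as in \cref{eqn: xi map for adjoint bundle}, \cref{eqn:Action on P times A}, \cref{eqn: Upsilon map for adjoint bundle}; for the target bundle $TP \oplus a^*TX_0$ the corresponding data are $\xi' = da$, $\tau'^0$, $\tau'^1$, $\Upsilon'$ from \cref{eqn:quasi action on TP}, \cref{eqn: Upsilon map for atiyah bundle}. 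Essentially all the real work has already been carried out earlier in the section; the present proposition is an assembly of those results.

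Condition $\xi' \circ \Phi_0 = \Phi_1 \circ \xi$ says $(da)_p(d\delta_p)_{1_{a(p)}}(\alpha) = (dt)_{1_{a(p)}}(\alpha)$, which is exactly the identity derived in \cref{eqn:computation of a comp delta}. The equivariance condition $(\tau'_g)^0 \circ (\Phi_0)_p = (\Phi_0)_{pg}\circ \tau_g^0$ is exactly the statement $(d\delta_{pg})_{1_{a(pg)}}(\lambda_{g^{-1}}^0(\alpha)) = \tau_g^0\bigl((d\delta_p)_{1_{a(p)}}(\alpha)\bigr)$ of \cref{thm:equvariance of fundamental vector field map}. The other equivariance condition $(\tau'_g)^1 \circ (\Phi_1)_p = (\Phi_1)_{pg}\circ \tau_g^1$ is immediate, because $\Phi_1 = {\rm Id}$ and the two quasi actions on $a^*TX_0$ defined in \cref{eqn:Action on P times A} and \cref{eqn: quasi action of X on a^*TX_0} coincide (both are given by $v \mapsto \lambda_{g^{-1}}^1(v)$).

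The only condition requiring a short new computation is the homotopy compatibility
\[
\Upsilon'(g,h)(v) \;=\; (\Phi_0)_{pgh}\bigl(\Upsilon(g,h)(v)\bigr)\,,
\]
that is,
\[
0\cdot l_{gh}\bigl(K_{\sigma}^{bas}(h^{-1},g^{-1})(v)\bigr) \;=\; (d\delta_{pgh})_{1_{a(pgh)}}\bigl(K_{\sigma}^{bas}(h^{-1},g^{-1})(v)\bigr).
\]
Writing $\beta := K_{\sigma}^{bas}(h^{-1},g^{-1})(v) \in A_{a(pgh)} = A_{s(gh)}$, the right-hand side expands via \cref{eqn:description of fundamental vector field map} to $d\mu_{(pgh, 1_{a(pgh)})}\bigl(0, (di)_{1_{a(pgh)}}(\beta)\bigr)$, while $l_{gh}(\beta) = (dL_{gh})_{1_{s(gh)}}(di)_{1_{s(gh)}}(\beta) \in T_{gh}X_1$ by \cref{eqn:description of left translations at tangent level}, so the left-hand side is $d\mu_{(p, gh)}\bigl(0,\, l_{gh}(\beta)\bigr)$. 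The needed equality then follows by differentiating the associativity of the action $\mu(p, gh \cdot h') = \mu(\mu(p, gh), h')$ in the variable $h'$ at $h' = 1_{s(gh)}$: this yields $d\mu_{(p, gh)}(0, dL_{gh}(\eta)) = d\mu_{(pgh, 1_{s(gh)})}(0, \eta)$ for every $\eta \in T_{1_{s(gh)}} X_1$, applied to $\eta = (di)_{1_{s(gh)}}(\beta)$.

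I expect no serious obstacle; the slight subtlety is that, even though $\Phi_0 = d\delta$ is only defined fibrewise (not as a map of quasi equivariant bundles in the strict sense), its interaction with the basic curvature $K_{\sigma}^{bas}$ is dictated by the same infinitesimal associativity of the $\mathbb{X}$-action on $P$ that already drove \cref{prop:fundamental vector field map preserve error of quasi action} and \cref{thm:equvariance of fundamental vector field map}. Once the four conditions are checked, the proposition is established.
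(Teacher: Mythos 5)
Your proposal is correct and is precisely the verification the paper leaves implicit (its proof reads only ``Verification is fairly straightforward''): you assemble \cref{eqn:computation of a comp delta}, \cref{thm:equvariance of fundamental vector field map}, and the coincidence of the two $\tau^1$ actions, and your remaining homotopy-compatibility computation $0.l_{gh}\bigl(K_{\sigma}^{bas}(h^{-1},g^{-1})(v)\bigr)=(d\delta_{pgh})_{1_{a(pgh)}}\bigl(K_{\sigma}^{bas}(h^{-1},g^{-1})(v)\bigr)$ is exactly the manipulation already carried out in \cref{eqn:computation of fundamental vector field on an error of A} (there applied to $(dt)(\alpha)$ in place of a general $v$), resting on \cref{diagram:multiplicative action of torsor at tangent space level} and \cref{eqn:description of left translations at tangent level}. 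No gaps.
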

\begin{proof}
    Verification is fairly straightforward.
\end{proof}
We make a note of the following properties of the inverse $d\delta$
map for our immediate use. Recall that, the fundamental vector field map $d\delta\colon P \times_{X_0} A \rightarrow TP$  in \cref{eqn:fundamental vector field map} is an isomorphism between vector bundles $P \times_{X_0} A$ and ${\rm{ker}} \, d\pi.$  Then, the inverse  $(d\delta)^{-1}\colon {\rm{ker}} \, d\pi \rightarrow P \times_{X_0} A$ is 
    \begin{equation}\label{eqn:inverse of fundamental vector field map}
        \begin{split}
            (d\delta)^{-1}(p,u)={} & (p,(d\delta_p)_{1_{a(p)}})^{-1}(u)).
        \end{split}
    \end{equation}

\begin{cor}\label{cor:preservation of error by inverse of fundamental vector field map}
As per the notations in  \cref{prop:fundamental vector field map preserve error of quasi action} and \cref{thm:equvariance of fundamental vector field map}, 
\vskip 0.1 cm 
   (i) for every $u \in T_pP$ and $(p,g,h)$ satisfying $a(p)=t(g),s(g)=t(h),$   
   \begin{equation}\label{eqn:preservation of error by inverse of fundamental vector field map}
         (d\delta_{pgh})^{-1}\bigl(\tau_{gh}^0(u)-\tau_h^0\tau_g^0(u)\bigr)=\lambda_{h^{-1}g^{-1}}^0(d\delta_p)_{1_{a(p)}}^{-1}(u)-\lambda_{h^{-1}}^0\lambda_{g^{-1}}^0(d\delta_p)_{1_{a(p)}}^{-1}(u).
    \end{equation}
    \vskip 0.05 cm
    (ii)  We have    
    $(d\delta)^{-1}\colon {\rm{ker}} \, d\pi \rightarrow P \times_{X_0} A$  is a morphism in $\mathcal {Q-EQUIV}(P, \mathbb{X})$;  That is, for every $u \in ({\rm{ker}} \, d\pi)_p$ and $(p,g)$ satisfying $a(p)=t(g)$, we have 
 \begin{equation}\label{eqn:equivariance of inverse of fundamental vector field map}
        \begin{split}
            (d\delta)^{-1}\bigl(\tau_g^0(u)\bigr)={} & \lambda_{g^{-1}}^0(d\delta_p)_{1_{a(p)}}^{-1}(u).
        \end{split}
    \end{equation}
    
\end{cor}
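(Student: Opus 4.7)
The plan is to derive both identities as direct consequences of \cref{prop:fundamental vector field map preserve error of quasi action} and \cref{thm:equvariance of fundamental vector field map} by substituting $\alpha := (d\delta_p)_{1_{a(p)}}^{-1}(u)$ and then inverting the fundamental vector field map on both sides. Since $d\delta_p\colon A_{a(p)} \to (\ker d\pi)_p$ is a vector bundle isomorphism (as recorded before \cref{eqn:inverse of fundamental vector field map}), this substitution is well-defined as soon as the relevant vector on the right-hand side is known to lie in the vertical subbundle $\ker d\pi$.

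For part (ii), I would start from the equivariance identity \cref{eqn:X-equivariancy of ddelta}, substitute $\alpha := (d\delta_p)_{1_{a(p)}}^{-1}(u)$ for $u \in (\ker d\pi)_p$, and then apply $(d\delta_{pg})^{-1}$ to both sides to conclude. Before inverting, one verifies that $\tau_g^0(u) \in (\ker d\pi)_{pg}$: by condition (1) of \cref{defn:torsor}, $\pi \circ \mu = \pi \circ \mathrm{pr}_1$ on $P \times_{X_0} X_1$, so differentiation gives $(d\pi)_{pg}\tau_g^0(u) = (d\pi)_p(u) = 0.$ Thus $(d\delta_{pg})^{-1}$ is applicable, and rearranging yields \cref{eqn:equivariance of inverse of fundamental vector field map}.

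For part (i), I proceed identically: substitute the same $\alpha$ into \cref{eq:ddelta preserves error} of \cref{prop:fundamental vector field map preserve error of quasi action} and apply $(d\delta_{pgh})^{-1}$ to both sides. The only prerequisite is that the right-hand side $\tau_{gh}^0(u) - \tau_h^0\tau_g^0(u)$ lies in $(\ker d\pi)_{pgh}$ so that the inverse makes sense; this is precisely what \cref{Rem:error of quasi action on TP lies insie vertical} supplies. A small cosmetic point is that the statement writes $u \in T_pP$, but the inverse $(d\delta_p)_{1_{a(p)}}^{-1}(u)$ forces us to restrict to $u \in (\ker d\pi)_p$; under this (apparently intended) reading the argument is closed.

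There is no substantive obstacle: the corollary is essentially an algebraic reformulation of the previous two results through the fibrewise isomorphism $d\delta\colon P \times_{X_0} A \xrightarrow{\sim} \ker d\pi$. The only book-keeping is to confirm at each application of $(d\delta)^{-1}$ that its argument is vertical, which is handled by \cref{Rem:error of quasi action on TP lies insie vertical} for part (i) and by condition (1) of \cref{defn:torsor} for part (ii). No new geometric input is required beyond what has already been established in \cref{Subsection:Groupoid AtP} and \cref{subsection:Functor ddelta}.
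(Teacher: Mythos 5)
Your proposal is correct and is essentially the same argument as the paper's, which simply declares the corollary "immediate" from \cref{prop:fundamental vector field map preserve error of quasi action} and \cref{thm:equvariance of fundamental vector field map}; you have merely spelled out the substitution $\alpha=(d\delta_p)_{1_{a(p)}}^{-1}(u)$ and the verticality checks needed to apply $(d\delta)^{-1}$. Your observation that part (i) implicitly requires $u\in(\ker d\pi)_p$ rather than all of $T_pP$ is a fair reading of the intended statement.
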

\begin{proof}
    Immediate from \cref{prop:fundamental vector field map preserve error of quasi action} and \cref{thm:equvariance of fundamental vector field map}. 
\end{proof}

From \cref{thm:equvariance of fundamental vector field map} the smooth map $\Bar{d\delta}:{\rm Obj\Bigl(\AdP\Bigr)} \rightarrow {\rm Obj\Bigl(\AtP\Bigr)},  \Big([p, \alpha, v]\Bigr) \mapsto \Bigl([p,  (d\delta_p)_{1_{a(p)}}\alpha, v]\Bigr)$ extends to a smooth functor $\Bar{d\delta}:\AdP \rightarrow \AtP$ by 
\begin{equation}\label{eqn:functor form AdP to AtP}
    \begin{split}
        {\rm{Mor}}\bigl(\AdP\bigr) & \rightarrow {\rm{Mor}}\bigl(\AtP\bigr) \\
        \bigl([p, \alpha, v],g\bigr) & \rightarrow \bigl([p, (d\delta_p)_{1_{a(p)}}, v], g\bigr).
    \end{split}
\end{equation}
Recall we have seen in \cref{subsubsection:adjonit bundle AdP over M} and \cref{subsubsection:adjonit bundle AtP over M} that there are smooth surjective functors $\AdP \xrightarrow{\Bar{\pi}_{\rm Ad}} M$ and $\AtP \xrightarrow{\Bar{\pi}_{\rm At}} M$, and connected components of the fibres of both of these maps form vector spaces. Let $\langle[p, \alpha, v]\rangle$ be a   connected component of ${\Bar{\pi}}_{\rm Ad}^{-1}(m).$ Define
\begin{equation}\label{eqn:ddelta on connected component}
\begin{split}
{\bar d}\delta\colon & {\mathfrak {Conn}}(\Bar{\pi}_{\rm Ad}^{-1}(m))\to {\mathfrak {Conn}}(\Bar{\pi}_{\rm At}^{-1}(m))\\
& \, \, \qquad \langle[p, \alpha, v]\rangle \mapsto \langle[p, (d\delta_p)_{1_{a(p)}}(\alpha), v]\rangle.
\end{split}
\end{equation}
Clearly, the map is well defined by the functionality of ${\bar d}\delta$ and \cref{thm:equvariance of fundamental vector field map}. Curiously,  $\Bar{d\delta}$ in \cref{eqn:ddelta on connected component} is in fact linear.

To see this, consider two connected components  $\langle[p, \alpha, v]\rangle,\langle [q, \beta, v']\rangle$ of ${\Bar{\pi}}_{\rm Ad}^{-1}(m)$, then from \cref{eqn:vector space structure on connected components of fibre of Ad(P)}, we have that $q=pg$ for some unique $g \in X_1$, and the addition is defined by $$\langle[p, \alpha, v]\rangle+\langle[q, \beta, v']\rangle=\langle[pg, \lambda_{g^{-1}}^0(\alpha)+\beta, \lambda_{g^{-1}}^1(v)+v']\rangle.$$  Hence,
\begin{equation}\label{eqn:linearity of functor from AdP to AtP}
    \begin{split}
        \Bar{d\delta}\bigl(\langle[p, \alpha, v]\rangle+\langle[q, \beta, v']\rangle\bigr)={} & \Bar{d\delta}\bigl(\langle[pg, \lambda_{g^{-1}}^0(\alpha)+\beta, \lambda_{g^{-1}}^1(v)+v']\rangle\bigr)\\
        ={} & \langle[pg,\underbrace{(d\delta_{pg})_{1_{a(pg)}}\lambda_{g^{-1}}^0(\alpha)}+(d\delta_{pg})_{1_{a(pg)}}(\beta),\lambda_{g^{-1}}^1(v)+v']\rangle \\
        & [{\rm The \, underbraced \, term \,}=\tau_g(d\delta_p)_{1_{a(p)}}(\alpha) \, {\rm by \, \cref{thm:equvariance of fundamental vector field map}}] \\
        ={} & \langle[pg,\tau_g(d\delta_p)_{1_{a(p)}}(\alpha)]+(d\delta_{pg})_{1_{a(pg)}}(\beta), \lambda_{g^{-1}}^1(v)+v' ]\rangle \\
        ={} & \langle[p,(d\delta_p)_{1_{a(p)}}(\alpha), v]\rangle+\langle[q,(d\delta_{pg})_{1_{a(pg)}}(\beta), v']\rangle\\
        ={} & {\bar d}\delta \langle[p,\alpha, v]\rangle+{\bar d}\delta\langle[q, \beta, v']\rangle.
    \end{split}
\end{equation}
\begin{prop}\label{prop:functor bardelta}
    The functor $\Bar{d\delta}:\AdP \rightarrow \AtP$ is a smooth, full, faithful functor with an injective object level map.  The induced map \cref{eqn:ddelta on connected component}   on the connected components of each fibre: ${\mathfrak {Conn}}(\Bar{\pi}_{\rm Ad}^{-1}(m))\to {\mathfrak {Conn}}(\Bar{\pi}_{\rm At}^{-1}(m))$  is linear.    
\end{prop}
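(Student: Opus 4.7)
The plan is to verify the five claims in turn: smoothness, injectivity on objects, faithfulness, fullness, and linearity of the fibrewise induced map. \emph{Smoothness} comes essentially for free from the universal property of the quotient diffeology. The underlying vector bundle morphism $(d\delta,\mathrm{Id})\colon P\times_{X_0}A\oplus a^*TX_0 \to TP\oplus a^*TX_0$ is smooth between ordinary manifolds; composing it with the subduction $TP\oplus a^*TX_0 \to \mathrm{Obj}(\AtP)$ and invoking the universal property for the quotient diffeology on $\mathrm{Obj}(\AdP)$ yields smoothness on objects. On morphisms one only augments this by the identity on $X_1$ and uses the fibre product diffeology of \cref{example:fibre product diffeology}.

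\emph{Injectivity on objects} is the crux. Suppose $\bar d\delta[p,\alpha,v]=\bar d\delta[p',\alpha',v']$. Projecting to $P$ gives $p=p'$, and the equivalence in $\mathrm{Obj}(\AtP)$ then requires $(d\delta_p)_{1_{a(p)}}(\alpha-\alpha')$ to be a finite sum of errors $\tau^0_{gh}(u_i)-\tau^0_h\tau^0_g(u_i)$, and $v-v'$ to be a finite sum of errors of $\tau^1$. By \cref{Rem:error of quasi action on TP lies insie vertical} these $TP$-errors all lie in $\ker d\pi$, so, since $d\delta_p\colon A_{a(p)}\xrightarrow{\sim}\ker(d\pi)_p$ is an isomorphism, one may apply $(d\delta)^{-1}$, and \cref{cor:preservation of error by inverse of fundamental vector field map} transports the sum of $TP$-errors to a sum of $A$-errors. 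Hence $\alpha\sim\alpha'$ and (trivially) $v\sim v'$, so the two objects of $\AdP$ agree.

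\emph{Faithfulness} and \emph{fullness} are now formal. Every morphism in $\AdP$ (resp.\ $\AtP$) from a fixed source is determined by the single datum $g\in X_1$, since the target is then forced by $\tilde t$. Because $\bar d\delta$ preserves the $X_1$-component and is injective on sources, it is injective on morphisms, i.e.\ faithful. For fullness, given any morphism $\bigl([p,(d\delta_p)_{1_{a(p)}}(\alpha),v],g\bigr)$ in $\AtP$ emanating from $\bar d\delta[p,\alpha,v]$ and landing in the image of $\bar d\delta$, the evident lift $([p,\alpha,v],g)\in\mathrm{Mor}(\AdP)$ does the job, and the equivariance identity $(d\delta_{pg})_{1_{a(pg)}}(\lambda^0_{g^{-1}}\alpha)=\tau^0_g\bigl((d\delta_p)_{1_{a(p)}}(\alpha)\bigr)$ of \cref{thm:equvariance of fundamental vector field map} guarantees that its image reproduces the prescribed target. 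Finally, the \emph{linearity} of the induced map on connected components has already been computed in \cref{eqn:linearity of functor from AdP to AtP} just above the proposition and uses only the same equivariance identity. The main obstacle in the whole argument is thus the injectivity step and, at its heart, the fact that $(d\delta)^{-1}$ carries $TP$-errors to $A$-errors; once this is granted via \cref{cor:preservation of error by inverse of fundamental vector field map}, the remaining items are clean bookkeeping against the constructions of $\AdP$ and $\AtP$.
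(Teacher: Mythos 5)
Your proposal is correct and follows essentially the same route as the paper: injectivity on objects via \cref{cor:preservation of error by inverse of fundamental vector field map} (with the useful explicit remark that the $TP$-errors lie in $\ker d\pi$ so that $(d\delta)^{-1}$ applies), fullness via the equivariance identity of \cref{thm:equvariance of fundamental vector field map}, faithfulness from the fact that arrows are determined by their source and the element $g\in X_1$, and linearity from \cref{eqn:linearity of functor from AdP to AtP}. The only point worth making explicit in the fullness step is that, after equivariance shows the lifted arrow's target has the same image as the prescribed target $[q,\beta,v']$, you must invoke the already-established injectivity on objects to conclude the two targets coincide in $\AdP$ — exactly as the paper does.
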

\begin{proof}
    We can see the map in \cref{eqn:functor form AdP to AtP} is injective by supposing  ${\bar d}\delta ([p, \alpha, v])={\bar d}\delta ([p, \beta], v');$ i.e.    
     $$[p,(d\delta_p)_{1_{a(p)}}(\alpha), v]=[p, (d\delta_p)_{1_{a(p)}}(\beta), v'].$$ Then,
     \begin{equation}\label{eqn:difference of ddelta of alpha and ddelta of beta}
     \begin{split}
     (d\delta_p)_{1_{a(p)}}(\alpha)-(d\delta_p)_{1_{a(p)}}(\beta)={} & \sum_{i=1}^{n}\tau_{g_ih_i}^0(u_i)-\tau_{h_i}^0\tau_{g_i}^0(u_i), \\
     v-v'={} & \sum_{i=1}^n\tau_{g_ih_i}^1(v_i)-\tau_{h_i}^1\tau_{g_i}^1(v_i)
     \end{split}
     \end{equation}
     for some $u_i \in T_{p_i}P$ and $v_i \in T_{a(p_i)}X_0$ such that $p=p_ig_ih_i$, for every $1\leq i \leq n$. Hence,
    \begin{equation}
        \begin{split}
            \alpha-\beta={}& (d\delta_p)_{1_{a(p)}}^{-1}\bigl(\underbrace{(d\delta_p)_{1_{a(p)}}(\alpha)-(d\delta_p)_{1_{a(p)}}(\beta)}\bigr) \\
            & [{\rm The \, underbraced \, term \,}=\sum_{i=1}^{n}\tau_{g_ih_i}(u_i)-\tau_{h_i}\tau_{g_i}(u_i) \, {\rm by \, \cref{eqn:difference of ddelta of alpha and ddelta of beta}}] \\
            ={} & \sum_{i=1}^{n}\underbrace{(d\delta_{p_ig_ih_i})_{1_{a(p_ig_ih_i)}}^{-1}\bigl(\tau_{g_ih_i}(u_i)-\tau_{h_i}\tau_{g_i}(u_i)\bigr)}\\
            & [{\rm The \, underbraced \, term \,}=\lambda_{h_i^{-1}g_i^{-1}}(d\delta_{p_i})_{1_{a(p_i)}}^{-1}(u_i)-\lambda_{h_i^{-1}}\lambda_{g_i^{-1}}(d\delta_{p_i})_{1_{a(p_i)}}(u_i) \, {\rm by \, \cref{cor:preservation of error by inverse of fundamental vector field map}}] \\
            ={} & \sum_{i=1}^{n}\lambda_{h_i^{-1}g_i^{-1}}^0(d\delta_p)_{1_{a(p)}}^{-1}(u_i)-\lambda_{h_i}^0\lambda_{g_i}^0(d\delta_p)_{1_{a(p)}}^{-1}(u_i). 
        \end{split}
    \end{equation}
    Thus, we conclude that $[p, \alpha, v]=[p, \beta, v'].$ 
 Now if for the objects  $[p, \alpha, v], [q, \beta, v']$ in $\AdP$, there is  an  arrow  in $\AtP$ between $[p,(d\delta_p)_{1_{a(p)}}(\alpha), v]$ and $[q, (d\delta_q)_{1_{a(q)}}(\beta), v'],$  then, by definition of ${\rm{Mor}}\bigl(\AtP\bigr),$ there exists a unique $g \in X_1$ such that $q=pg$ and $[q,(d\delta_q)_{1_{a(q)}}(\beta), v']=[pg, \tau_g^0(d\delta_p)_{1_{a(p)}}(\alpha), \tau_{g}^1(v)].$ 
    So,
    \begin{equation}
        \begin{split}
            \Bar{d\delta}\bigl([pg, \lambda_{g^{-1}}^0(\alpha), \lambda_{g^{-1}}^1(v)]\bigr)={} & [pg, \underbrace{(d\delta_{pg})_{1_{a(pg)}}\bigl(\lambda_{g^{-1}}^0(\alpha)}, \lambda_{g^{-1}}^1(v)\bigr)] \\
            & [{\rm The \, underbraced \, term \,}=\tau_g^0(d\delta_p)_{1_{a(p)}}(\alpha) \, {\rm by \, \cref{thm:equvariance of fundamental vector field map}}] \\
            ={} & [pg,\tau_g^0(d\delta_p)_{1_{a(p)}}(\alpha), \lambda_{g^{-1}}^1(v)]=[q, (d\delta_q)_{1_{a(q)}}(\beta), v']= \Bar{d\delta}\bigl([q, \beta, v']\bigr).
            \end{split}
            \end{equation}
Hence, by Injectivity of $\Bar{d\delta}$ on object level, we get that $[q, \beta, v']=[pg, \lambda_{g^{-1}}(\alpha), \lambda_{g^{-1}}^1(v)]$. Eventually, $\bigl([p, \alpha, v],g\bigr)$ is the unique arrow from $[p, \alpha, v]$ and $[pg, \lambda_{g^{-1}}(\alpha), \lambda_{g^{-1}}^1(v)]=[q, \beta, v']$ whose image is the unique arrow from $[p,(d\delta_p)_{1_{a(p)}}(\alpha), v]$ to $[q, (d\delta_q)_{1_{a(q)}}(\beta), v']$ in $\AtP$. Thus, the functor $\Bar{d\delta}$ is  full.
Now we note that the cardinality of each of the hom sets in either of the categories $\AdP$ and $\AtP$ is at most $1,$ which implies the functor is faithful.
\end{proof}
One can make a similar consideration for the differential of the  surjective submersion $\pi\colon P \rightarrow M$ to induce a  smooth functor $\Bar{d\pi}:\AtP \rightarrow \ActTM$ over $M$ as follows:
\begin{equation}\label{eqn:functor form AtP to ActTM}
    \begin{split}
       & {\rm{Obj}}\bigl(\AtP\bigr)  \rightarrow {\rm{Obj}}\bigl(\ActTM\bigr) , \hspace{0.3cm} [p, u, v]  \rightarrow (p,\pi(p),(d\pi)_p(u)), \\
       & {\rm{Mor}}\bigl(\AtP\bigr)  \rightarrow {,\rm{Mor}}\bigl(\ActTM\bigr), \hspace{0.3cm} \bigl([p, u, v],g\bigr)  \mapsto (p,\pi(p),(d\pi)_p(u)), g).
    \end{split}
\end{equation}
Verification that \cref{eqn:functor form AtP to ActTM} is a well-defined functor is more or less routine.  It is also apparent from the discussion in  \cref{subsubsection:The bundle ActTM over M}, that the connected components of each fibre $\Bar{\pi}^{-1}_{\ActTM}(m)$ of $\ActTM$ can be naturally identified with  $T_mM.$  With the vector space structure  defined in \cref{eqn:vector space stucture on connected components of Atiyah bundle} for ${\mathfrak{Conn}}\bigl(\Bar{\pi}^{-1}_{\AtP}(m)\bigr),$ define
\begin{equation}\label{eqn:dpi restricted to a connected component}
    \begin{split}
        \Bar{d\pi}:{\mathfrak{Conn}}\bigl(\Bar{\pi}^{-1}_{\AtP}(m)\bigr) & \rightarrow {\mathfrak{Conn}}\bigl(\Bar{\pi}^{-1}_{\ActTM}(m)\bigr) \\
        \langle[p, u, v]\rangle & \rightarrow  {d\pi}_p(u). 
    \end{split}
\end{equation}
We skip the verification of the linearity of the map $\Bar{d\pi}$ in \cref{eqn:dpi restricted to a connected component}.  

\begin{prop} \label{prop:functor barpi} The functor $\Bar{d\pi}:\AtP \rightarrow \ActTM$ is a smooth, essentially surjective and faithful functor.  The induced map \cref{eqn:dpi restricted to a connected component}   on the connected components of each fibre: ${\mathfrak {Conn}}(\Bar{\pi}_{\rm At}^{-1}(m))\to {\mathfrak {Conn}}(\Bar{\pi}_{\ActTM}^{-1}(m))$  is linear.    
\end{prop}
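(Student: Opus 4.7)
The plan is to verify that $\Bar{d\pi}$ is a well-defined smooth functor, and then separately handle essential surjectivity, faithfulness, and the linearity of the induced map on connected components. First I would check well-definedness. On objects, if $(p,u,v)\sim(p,u',v')$ under the relation defining ${\rm Obj}(\AtP)$, then $u-u'$ is a finite sum of error terms $\tau_{gh}^0(u_i)-\tau_h^0\tau_g^0(u_i)$; by \cref{eqn:simplified version of error of quasi action on TP} each such error has the form $0.l_{gh}(\cdots)$, which lies in $\ker (d\pi)$ by \cref{Rem:error of quasi action on TP lies insie vertical}. Consequently $(d\pi)_p(u)=(d\pi)_p(u')$ and the assignment $[p,u,v]\mapsto (p,\pi(p),(d\pi)_p(u))$ descends to the quotient. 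Functoriality is straightforward: if $([p,u,v],g)$ is a morphism, its image has target $(pg,\pi(pg),(d\pi)_{pg}(\tau_g^0(u)))$ in $\ActTM$, which coincides with $(pg,\pi(pg),(d\pi)_p(u))$ since differentiating $\pi\circ\mu=\pi\circ{\rm pr}_1$ gives $(d\pi)_{pg}(u.\alpha)=(d\pi)_p(u)$ for every $(u,\alpha)\in T_{(p,g)}(P\times_{X_0}X_1)$. Smoothness of $\Bar{d\pi}$ then follows by the same technique used for the target map $\tilde{t}$ in \cref{Subsection:Diffeology on  AdP}: any plot into the quotient is locally lifted through $q$ to a smooth map into $TP\oplus a^*TX_0$, and post-composition with the smooth bundle morphism $(p,u,v)\mapsto (p,\pi(p),(d\pi)_p(u))$ is again smooth.

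For essential surjectivity, I would exhibit the stronger property of surjectivity on objects: given $(p,\pi(p),w)\in{\rm Obj}(\ActTM)$, the submersion property of $\pi$ provides a $u\in T_pP$ with $(d\pi)_p(u)=w$, and then $[p,u,0]\in{\rm Obj}(\AtP)$ maps to exactly $(p,\pi(p),w)$. Faithfulness is automatic, just as in the proof of \cref{prop:functor bardelta}: both categories have the property that the hom-set between any two objects has cardinality at most one---in $\AtP$ a morphism $[p,u,v]\to[q,u',v']$ exists only for the unique $g\in X_1$ with $q=pg$, whose existence and uniqueness are guaranteed by condition (2) of \cref{defn:torsor}, and the same is true in $\ActTM$---so any functor between them is faithful.

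The heart of the proposition is the linearity of the induced map \cref{eqn:dpi restricted to a connected component}, and this is where I expect the bookkeeping to be heaviest, though not conceptually deep. Given two components $\langle[p,u,v]\rangle,\langle[q,u',v']\rangle\in\mathfrak{Conn}(\Bar{\pi}_{\rm At}^{-1}(m))$, their sum, by \cref{eqn:vector space stucture on connected components of Atiyah bundle}, equals $\langle[pg,\tau_g^0(u)+u',\tau_g^1(v)+v']\rangle$ for the unique $g\in X_1$ with $q=pg$. Applying $\Bar{d\pi}$ yields $(d\pi)_{pg}\bigl(\tau_g^0(u)+u'\bigr)=(d\pi)_{pg}(\tau_g^0(u))+(d\pi)_{pg}(u')$; the first summand equals $(d\pi)_p(u)$ by the differentiated identity $\pi\circ\mu=\pi\circ{\rm pr}_1$ invoked above, while the second equals $(d\pi)_q(u')$ since $pg=q$. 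Under the natural identification $\mathfrak{Conn}(\Bar{\pi}_{\ActTM}^{-1}(m))\cong T_mM$, this is precisely $\Bar{d\pi}(\langle[p,u,v]\rangle)+\Bar{d\pi}(\langle[q,u',v']\rangle)$; compatibility with scalar multiplication is immediate from fibrewise linearity of $\tau_g^0$. The principal, and really only, obstacle is keeping the $g$ implicit in the definition of the fibrewise sum bookkept correctly; once it is, each step reduces to an identity already established in \cref{section:Atiyah sequence}.
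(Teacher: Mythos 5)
Your proposal is correct and follows the same route as the paper: the paper's own proof consists of exactly your two key observations (essential surjectivity from surjectivity of $d\pi$, faithfulness from the hom-sets having cardinality at most one), and it explicitly declares the well-definedness and linearity verifications routine and skips them. You have simply written out those skipped verifications, correctly, using the differentiated identity $\pi\circ\mu=\pi\circ{\rm pr}_1$ and the fact that the error terms of the quasi action on $TP$ lie in $\ker d\pi$.
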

\begin{proof}
A direct consequence of surjectivity of $d\pi.$ Faithfulness of $\Bar{d\pi}$ is the consequence of the cardinality of hom sets. 
\end{proof}
A direct verification confirms  the induced sequence of linear maps 
$$
 \begin{tikzcd}
        0 \arrow[r] & \mathfrak{Conn}\bigl(\Bar{\pi}^{-1}_{\AdP}(m)\bigr) \arrow[r,  " 
 \Bar{d\delta}"] & \mathfrak{Conn}\bigl(\Bar{\pi}^{-1}_{\AtP}(m)\bigr) \arrow[r, " 
 \Bar{d\pi}"] & \mathfrak{Conn}\bigl(\Bar{\pi}^{-1}_{\ActTM}(m)\bigr) \arrow[r] & 0
    \end{tikzcd}
    $$
is short exact. Combining all the results of this section, we make the final observation of this section as a theorem.
\begin{thm}\label{Thm:Atiyah sequence for Lie groupoid bundle existence}
Let $P\to M$ be a Lie groupoid  $\mbbX$-bundle, and $\mathbb{H}$  a connection on the Lie groupoid $\mathbb{X}$. Then  we have a sequence of smooth functors with $\bar d\delta$ full, faithful, injective on the objects and $\bar d\pi$ essentially surjective and faithful
\begin{equation}\label{sequence diagram:ATIYAH sequence}
\begin{tikzcd}
    0 \arrow[r] & \AdP \arrow[r, " 
 \Bar{d\delta}"] & \AtP \arrow[r,"\Bar{d\pi}"] &\ActTM \arrow[r] & 0,
\end{tikzcd}
\end{equation}
inducing a  sequence of linear maps 
\begin{equation}\label{sequence diagram:short exact sequence of connected components}
    \begin{tikzcd}
        0 \arrow[r] & \mathfrak{Conn}\bigl(\Bar{\pi}^{-1}_{\AdP}(m)\bigr) \arrow[r, , " 
 \Bar{d\delta}"] & \mathfrak{Conn}\bigl(\Bar{\pi}^{-1}_{\AtP}(m)\bigr) \arrow[r,  " 
 \Bar{d\pi}"] & \mathfrak{Conn}\bigl(\Bar{\pi}^{-1}_{\ActTM}(m)\bigr) \arrow[r] & 0
    \end{tikzcd}
\end{equation}
for each $m\in M.$  
The sequence is exact in the sense that \eqref{sequence diagram:short exact sequence of connected components} is exact for each $m\in M.$
\end{thm}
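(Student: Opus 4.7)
The plan is to assemble the theorem from results already established earlier in the section. The functor-level assertions are essentially in hand: \cref{prop:functor bardelta} supplies smoothness, fullness, faithfulness, object-level injectivity of $\bar d\delta$ together with linearity of the induced map on connected components, and \cref{prop:functor barpi} gives smoothness, essential surjectivity, faithfulness of $\bar d\pi$ along with linearity of its induced map. So the substantive content that remains is (a) the composite $\bar d\pi \circ \bar d\delta$ is zero, making \eqref{sequence diagram:ATIYAH sequence} a genuine sequence, and (b) the induced sequence \eqref{sequence diagram:short exact sequence of connected components} of linear maps between graded vector spaces is short exact for each $m\in M$.

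For (a), by construction of the fundamental vector field map \eqref{eqn:fundamental vector field map}, for every $(p,\alpha)\in P\times_{X_0}A$ the vector $(d\delta_p)_{1_{a(p)}}(\alpha)$ is tangent to the $\pi$-fibre, hence $(d\pi)_p\circ (d\delta_p)_{1_{a(p)}}=0$. Likewise the $a^*TX_0$ summand is sent to zero by $\bar d\pi$ by \eqref{eqn:functor form AtP to ActTM}. Passing to the quotients defining $\AdP$ and $\AtP$, and noting that both functors act as the identity in the $X_1$-slot, yields $\bar d\pi\circ \bar d\delta=0$ both on objects and on morphisms.

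For (b), I would argue fibrewise in three steps. \emph{Injectivity of $\bar d\delta$ on components:} if $\bar d\delta\langle[p,\alpha,v]\rangle=\langle[p,(d\delta_p)_{1_{a(p)}}(\alpha),v]\rangle=0$, then $(d\delta_p)_{1_{a(p)}}(\alpha)$ and $v$ are finite sums of homotopy errors of the respective quasi actions on $TP$ and $a^*TX_0$. Applying $(d\delta)^{-1}$ as in \cref{cor:preservation of error by inverse of fundamental vector field map} and using that $d\delta$ is a fibrewise isomorphism onto $\ker d\pi$, the error terms pull back to errors of the quasi action on $P\times_{X_0}A$, so $\langle[p,\alpha,v]\rangle=0$; this is essentially the object-level injectivity argument in \cref{prop:functor bardelta} repackaged on connected components. \emph{Exactness in the middle:} suppose $\bar d\pi\langle[p,u,v]\rangle=0$, i.e.\ $(d\pi)_p(u)=0$. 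Then $u\in (\ker d\pi)_p$, so there is a unique $\alpha\in A_{a(p)}$ with $u=(d\delta_p)_{1_{a(p)}}(\alpha)$ by the isomorphism $d\delta\colon P\times_{X_0}A\xrightarrow{\sim}\ker d\pi$, and $\langle[p,u,v]\rangle=\bar d\delta\langle[p,\alpha,v]\rangle$ by \eqref{eqn:ddelta on connected component}. \emph{Surjectivity of $\bar d\pi$:} any $w\in T_mM$ lifts to some $u\in T_pP$ at any chosen $p\in\pi^{-1}(m)$ since $\pi$ is a submersion; setting $v=0$ gives $\bar d\pi\langle[p,u,0]\rangle=w$ via the identification $\mathfrak{Conn}(\bar\pi_{\ActTM}^{-1}(m))\cong T_mM$ from \cref{subsubsection:The bundle ActTM over M}.

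The main technical point to get right is that the identifications on connected components used above are compatible with the equivalence relations of \cref{Subsection:Groupoid AdP} and \cref{Subsection:Groupoid AtP}. This is exactly where the two facts marshalled in \cref{Rem:error of quasi action on TP lies insie vertical} (the $TP$-errors lie in $\ker d\pi$) and \cref{prop:fundamental vector field map preserve error of quasi action} (the fundamental vector field map intertwines the $A$ and $TP$ errors) become essential: the former ensures $\bar d\pi$ kills the ambiguities in the $\AtP$ representatives, while the latter ensures that lifting $u\in \ker d\pi$ to $\alpha\in A$ is well-defined on connected components. Once these compatibilities are recorded, the three exactness checks above are routine, and combining them with \cref{prop:functor bardelta} and \cref{prop:functor barpi} yields the full statement.
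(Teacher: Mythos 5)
Your proposal is correct and follows essentially the same route as the paper, which simply combines \cref{prop:functor bardelta} and \cref{prop:functor barpi} and asserts that exactness of \eqref{sequence diagram:short exact sequence of connected components} follows by direct verification. You have in fact carried out that verification explicitly, correctly identifying \cref{Rem:error of quasi action on TP lies insie vertical} and \cref{cor:preservation of error by inverse of fundamental vector field map} as the facts that make the fibrewise exactness checks well defined on connected components.
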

\begin{defn}\label{Def:Atiyah sequence for Lie groupoid bundle}
The short exact sequence in \cref{sequence diagram:ATIYAH sequence} associated to a Lie groupoid $\mbbX$-bundle $\pi\colon P \rightarrow M$ and for a choice of connection $\mathbb{H}\subset TX_1$
will be called \textit{Atiyah sequence of the $\mbbX$-bundle for the connection $\mathbb{H}$}.
\end{defn}
\begin{rem}
Suppose a groupoid $\mbbX$ acts on a set $S.$ Then one can view the quotient under this groupoid action as having the action groupoid $[X_1\times S\rightrightarrows S].$
The construction of grupoids $\AdP$ and 
$\AtP$ respectively discussed in
\cref{subsubsection:adjonit bundle AdP over M} and \cref{subsubsection:adjonit bundle AtP over M} can then be interpreted as some kind of ``stacky quotient'' in the following sense.  Notice that if $[p,\alpha,v],[p',\alpha',v'] \in {\rm Obj}\bigl(\AdP\bigr)$ are connected by a morphism in $\AdP$, then by  \cref{{eqn:error of quasi action on A},{eqn:error of quasi action on TX_0},{eqn: equivalance relation on P times A plus TX_0}} there exists a unique $g \in X_1$ such that $p'=pg$ and 
    $\alpha'-\lambda_{g^{-1}}^0(\alpha)=\sum_{i=1}^nK_{\sigma}^{bas}(h_i^{-1},g_i^{-1})(dt)(\alpha_i)$ and $v'-\lambda_{g^{-1}}^1(v)=\sum_{i=1}^n(dt)K_{\sigma}^{bas}(h_i^{-1},g_i^{-1})(v_i).$ for some $\alpha_i \in A_{a(p_i)}$ and $v_i \in T_{a(p_i)}X_0.$  Same is true for the Atiyah bundle $\AtP$ also.
\end{rem}

\begin{example}
   In this example, we offer an explicit construction of the Atiyah sequence  \cref{sequence diagram:ATIYAH sequence}
   for a  unit bundle $X_1 \xrightarrow{t} X_0$  of \cref{example:unit bundle}. Let $A=1^*{\rm{ker}} \, ds \xrightarrow{\rm{pr}} X_0$ be the associated Lie algebroid of $\mathbb{X}$ (\cref{example:associated Lie algebroid}).  To begin with, we fix an Ehresmann connection $\mathbb{H}$ on $\mbbX.$  Then, it is easy to see that the quasi equivariant $\mathbb{X}$-bundle of \cref{subsubsection:Making P*A into a quasi equivariant X-bundle} reduces to the equivariant bundle $X_1 \times_{s, X_0,\rm{pr}} A \oplus TX_0=s^*A \oplus s^*TX_0$ with the respective quasi actions:
    \begin{equation}
        \begin{split}
            & \tau^0:s^*A \times_{X_0} X_1  \rightarrow s^*A, \hspace{0.3cm} \bigl(h,s(h),1_{s(h)},\alpha, g\bigr)  \mapsto  \biggl(hg,s(hg),1_{s(hg)},\lambda_{g^{-1}}^0(\alpha)\biggr), \\
            & \tau^1:s^*TX_0 \times_{X_0} X_1  \rightarrow s^*TX_0,\hspace{0.3cm} \bigl(h, s(h), v, g\bigr)  \mapsto \biggl(hg,s(hg),\lambda_{g^{-1}}^1(v)\biggr),
        \end{split}
    \end{equation}
    for $\xi=dt:s^*A \rightarrow s^*TX_0$, \, $(h, s(h), 1_{s(h)}, \alpha) \mapsto (h, s(h), (dt)_{1_{s(h)}}(\alpha)$ and $\Upsilon(g,h):T_{s(k)}X_0 \rightarrow A_{s(kgh)}$,\, $v \mapsto K_{\sigma}^{bas}(h^{-1},g^{-1})(v)$.    Likewise  for each $g \in X_1$, quasi actions on the quasi equivariant $\mathbb{X}$-bundle upto homotopy $TX_1 \oplus s^*TX_0$ is given by linear maps $\tau_g^0\colon T_hX_1 \rightarrow T_{hg}X_1$ and $\tau^1_g: T_{s(h)}X_0 \rightarrow T_{s(hg)}X_0$(meaningful  for all $h \in X_1$ satisfying $s(h)=t(g)$) 
    \begin{equation}\label{eqn:quasi action of X on TX_1 for the unit bundle}
        \begin{split}
            u\mapsto {} & u.(di)_{g^{-1}}\sigma_{g^{-1}}(ds)_h(u)\\
            v \mapsto {} & \lambda_{g^{-1}}^1(v)
        \end{split}
    \end{equation}
with $\xi=ds:TX_1 \rightarrow s^*TX_0$  and  $\Upsilon(g,h):T_{s(k)}X_0 \rightarrow T_{kgh}X_1$, \, $v \mapsto 0.l_{gh}K_{\sigma}(h^{-1},g^{-1})(v)$.   The quasi action $t^*TX_0 \times X_1  \rightarrow t^*TX_0, \, (h,t(h), v, g)  \mapsto (hg, t(hg), v)$ defines a quasi equivariant $\mathbb{X}$-bundle $\pi^*TM.$
 Further, the fundamental vector field map is given by, after applying \cref{eqn:description of fundamental vector field map} and \cref{eqn:description of left translations at tangent level},  $d\delta:s^*A \rightarrow TX_1$, $d\delta(h, s(h),1_{s(h)}, \alpha)=\bigl(h, l_h(\alpha)\bigr).$
Hence, one can see that \cref{sequence diagram:short exact sequence with fundamental vector field} is identical to \cref{sequence diagram:short exact sequence with left translations} with an added $s^*TX_0$ along with a quasi-action on each of the terms in it.  In turn  we derive the functors $\Bar{d\delta}:\AdP \rightarrow \AtP, \Bar{dt}:\AtP \rightarrow \ActTM$ respectively given as: $\Bar{d\delta}\Bigl([h, s(h), 1_{s(h)}, \alpha, v]\Bigr)=[h, l_{h}(\alpha), v]$ and $\Bar{dt}\Bigl([h, u, v]\Bigr)=(h,t(h),dt(u))$.
\end{example}

\begin{defn}[quasi-connection as splitting of Atiyah sequence]\label{defn:quasi connection}
    A \textit{quasi-connection} on a principal bundle $\pi:P \rightarrow M$ is a functor $F=(F_0, F_1)\colon \AtP \rightarrow \AdP$ over $M$  satisfying the conditions,
    \begin{enumerate}
            \item $F \circ \Bar{d\delta}= {\rm Id}_{\AtP}$, 
            \item  $F_0:{\rm{Obj}}\bigl(\AtP\bigr) \rightarrow {\rm{Obj}}\bigl(\AdP\bigr)$ is  a  map of diffeological graded vector  bundles,  
           \item $\Bar{F}:{\mathfrak{Conn}}\bigl(\pi_{\AtP}^{-1}(m)\bigr) \rightarrow {\mathfrak{Conn}}\bigl(\pi_{\AdP}^{-1}(m)\bigr)$ is a linear map for each $m \in M,$
    \end{enumerate}
    where $\Bar{F}$  is defined in a same fashion as $\bar{d}\delta$ in  \cref{eqn:ddelta on connected component}.
    
\end{defn}

\subsection{Atiyah sequence for a Cartan connection}\label{subsection:Atiyah sequence for a cartan connection}
The construction of the Atiyah sequence associated with a Lie groupoid $\mbbX$-bundle $(P, M)$ very much relies on a choice of the Ehresmann connection. As a special case here, we consider a Cartan connection; that is, an Ehresmann connection $\mathbb{H}_{\rm Cart}$ with a vanishing basic curvature; $K_{\sigma}^{bas}=0$ in  \cref{eqn:basic curvature}.

According to  \cref{eqn:error of quasi action on fibre product of P and A} and \cref{eqn:error of induced quasi action on TP}, errors in both quasi actions defined on $P \times_{X_0} A \oplus a^*TX_0$ and $TP \oplus a^*TX_0$ are zero for a Cartan connection. Thus, we have full actions of $\mbbX$ on both of them with the functoriality condition of \cref{eqn:functoriality of representation} intact. Moreover,  object spaces of the categories $\AdP$ and $\AtP$ are respectively, the smooth graded vector bundles $P\times_{X_0}A \oplus a^*TX_0$ and $TP \oplus a^*TX_0.$ In turn $\AdP$ and $\AtP$ are Lie groupoids and 
\begin{equation}\label{sequence diagram:ATIYAH sequence in case of cartan connection}
\begin{tikzcd}
    0 \arrow[r] & \AdP \arrow[r, " 
 \Bar{d\delta}"] & \AtP \arrow[r,"\Bar{d\pi}"] &\ActTM \arrow[r] & 0
\end{tikzcd}
\end{equation} 
is a sequence of Lie groupoid morphisms. 
The connected components of a fibre $\pi^{-1}(m)$ containing $(p, \alpha, v)\in P\times_{X_0}A \oplus a^*TX_0$ or $(p, u, v)\in TP \oplus a^*TX_0$ respectively for the maps $\AdP\to M$ or $\AtP\to M$ is exactly the orbit space  of $(p, \alpha, v)  \in P \times_{X_0} A$ or $(p, u, v)\in TP$. 

  As an illustration, let us examine the case of a principal Lie group $G$-bundle $P\to M$ viewed as a Lie groupoid $[G\rightrightarrows *]$-bundle as in \cref{example:ordinary bundle}. $G\to *$ has the unique (trivial) choice for a connection $\mathbb{H}=0.$ Also, the connected components of fibers of both $\AdP$ and $\AtP$ are exactly the fibers of the adjoint bundle ${\rm{Ad}}(P)$ and Atiyah bundle ${\rm{At}}(P)$ that appeared in \cref{sequence diagram:Atiyah sequence in classical setup}. Thus, the short exact sequences in \cref{sequence diagram:short exact sequence of connected components} for all $m\in M$ are glued together to produce the short exact sequence of vector bundles over $M$ in \cref{sequence diagram:Atiyah sequence in classical setup}.

\begin{example}\label{example:Atiyah sequence for ordinary bundle with action groupoid}
Consider the principal $[P \times G \rightrightarrows P]$-bundle $\pi\colon P \rightarrow M$ of \cref{example:ordinary bundle with action groupoid}. The various structure maps of the tangent groupoid $[TP \times TG \rightrightarrows TP]$ are given by
\begin{equation}\label{eqn:differentials of structural maps of action groupoid}
\begin{split}
& ds(p, u, g, \alpha)=(p, u),\\
& dt(p, u, g, \alpha)=(pg, u\cdot \alpha),\\
& \bigl(pg, u. \alpha, h, \beta\bigr)\circ \bigl(p, u, g, \alpha\bigr)= (p, u, gh, \alpha\cdot \beta),\\
& (p, u, g,\alpha)^{-1}=\bigl(pg, u\cdot \alpha, g^{-1}, -g^{-1}(\alpha)g^{-1}\bigr),\\
& \qquad 1_{(p, u)}=(p, u, e, 0).
\end{split}
\end{equation}
Also notice that the associated Lie algebroid of $[P \times G \rightrightarrows P]$ is given by the vector bundle $A=1^*{\rm{ker}} \, ds=\{(p,0,X) \mid 0 \in T_pP, X \in T_eG\}\simeq P \times L(G)$.  Now, taking the  differential of the action of $[P\times G\rightrightarrows P]$ on $P$, given by $pg(p,g)= pgg^{-1}=p$
in \cref{example:ordinary bundle with action groupoid}, we obtain the action of $[TP\times TG\rightrightarrows TP]$ on $TP,$
\begin{equation}\label{eqn:differential of action map of torsor of action groupoid}
    \begin{split}
        (pg, u.\alpha) \bigl(p, u, g, \alpha\bigr)=\bigl(p, v.(-g^{-1}(\alpha)g^{-1})\bigr).
    \end{split}
\end{equation}
We have  the unique Cartan connection $\mathbb{H}_{(p,g)}=T_pP \times {0}$ (\cref{example:connection on action groupoid}) on  $[P \times G \rightrightarrows P].$  Then the associated left, right splittings  of \cref{sequence diagram:short exact sequence with right translations} are 
\begin{equation}\label{eqn:splittings associtated to connection on action groupoid}
\begin{split}
&\sigma_{(p, g)}:T_pP \rightarrow  T_pP \times T_gG, \hspace{0.3cm} u  \mapsto   (u, 0),\\
&\psi_{(p, g)}:T_pP \times T_gG \rightarrow  A_{(pg, e)}, \hspace{0.3cm} (u,\alpha)  \mapsto \underbrace{(dR_{(pg, g^{-1})})_{(p, g)}(0,\alpha)}_{=(0,\alpha) \circ (0,0) \, \rm{by} \, \cref{eqn:description of right translations at tangent level}} 
 =\underbrace{(0,\alpha) \circ (0,0)}_{(0,0.\alpha) \, by \, \cref{eqn:differentials of structural maps of action groupoid}}=(0,0.\alpha).
\end{split}
\end{equation}
Since, $\sigma$ is a Cartan connection, the induced quasi representation of $\mathbb{X}$ on both $A$ and $TX_0$ reduces to a representation given as follows: For each $(p,g) \in P \times G$, the corresponding linear isomorphisms $\lambda_{(p,g)}^0 \colon A_{s(p,g)}=0 \times L(G) \rightarrow A_{t(p,g)}=0 \times L(G), \, \lambda_{(p,g)}^1:T_{s(p,g)}X_0=T_pP \rightarrow T_{t(p,g)}X_0=T_{pg}P$ are given by respectively as $\lambda_{(p, g)}^0(0, X)={}  -\psi_{(p, g)}l_{(p, g)}(0, X)$, and $\lambda_{(p,g)}^1(u)=u.0.$  By applying \cref{eqn:description of left translations at tangent level} on \cref{eqn:differentials of structural maps of action groupoid} and \cref{eqn:splittings associtated to connection on action groupoid} we obtain 
\begin{equation}
    \begin{split}
       &\lambda^1_{(p, g)}(0, X),
       ={}  (0, {\rm ad}_{g^{-1}}(X))\\
      & \lambda^1_{(p,g)}(u)={}  u.0.
    \end{split}
\end{equation}
 On the other hand, for each $p \in P$, differential of the map $\delta_p\colon s^{-1}(p) \rightarrow \pi^{-1}(\pi(p)), g\mapsto pg,$  produces the fundamental vector field
 $(d\delta)(p, p, 0, X)={}  \bigl(p,(d\delta_p)_{1_{a(p)}}(0, X)\bigr),$   by \cref{eqn:description of fundamental vector field map}, \cref{eqn:differentials of structural maps of action groupoid} and \cref{eqn:differential of action map of torsor of action groupoid} which is same as $(p, 0.X).$
\begin{equation}
    \begin{split}
       (d\delta)(p, p, 0, X) 
        ={}  (p, 0.X).
    \end{split}
\end{equation}
Hence, the \cref{sequence diagram:short exact sequence with fundamental vector field} 
turns into the following sequence of vector bundles:
\begin{equation}\label{eqn: short exact sequence for principal action groupoid bundles}
    \begin{tikzcd}
        0 \arrow[r] & P \times_{P}(P \times L(G) \oplus P \times TP) \arrow[r,"d\delta"] & TP \oplus P \times TP \arrow[r,"d\pi"]  & \pi^*TM \arrow[r] & 0.
    \end{tikzcd}
\end{equation}
Corresponding Atiyah sequence is the sequence of action groupoids of terms involved in \cref{eqn: short exact sequence for principal action groupoid bundles}.
\end{example}

\section{Connections on a principal Lie groupoid bundle}\label{section:connection as 1-form}
  In this section, we introduce the notion of a connection on a principal $\mathbb{X}$-bundle $\pi\colon P \rightarrow M$ for a choice of a connection $\sigma$ on the Lie groupoid $\mbbX.$ A connection, according to our definition, will be a certain type of $1$-form on $P$ that takes values in the associated Lie algebroid $A=1^* {\rm{ker}} \, ds$ (\cref{example:associated Lie algebroid}) of $\mathbb{X}.$
It should be noted that the framework introduced here relies on the Ehresmann connection we choose on the Lie groupoid $\mathbb{X}$. Since two entirely distinct types of connections, one is a connection $\sigma$ or $\mathbb{H}$ on a Lie groupoid (introduced in \cref{subsection:Connection of a Lie groupoid}) and the other is a connection on a principal $\mathbb{X}$-bundle (we are about to introduce here) will be involved here, to avoid any confusion we adopt the convention of calling the former a \textit{Lie groupoid connection} and later a \textit{principal Lie groupoid bundle connection}. Towards the end of this section, we will explicitly work out the dependence of our principal Lie groupoid bundle connection on the choice of a Lie groupoid connection.  
  
  First, we recall the definition of a vector bundle-valued differential form on a smooth manifold. 
\begin{defn}[\cite{tu2017differential}]\label{defn:vector bundle valued differential form}
  For a vector bundle  $E \rightarrow M$, a smooth \textit{$E$-valued differential $k$-form} is defined as a section of inner hom bundle ${\rm Hom}(\bigwedge^k{T^*M} \otimes E)$.  
\end{defn}
Equivalently, a $E$-valued differential $k$-form smoothly assigns an alternating $k$-linear map $T_pM \times \ldots \times T_pM \rightarrow E_p$, for each $p \in M$. The space of such forms is denoted by $\Omega^k(M, E)$.  In particular, one can see that an $E$-valued differential $1$-form is nothing but a bundle map between $TM$ and $E$.
 Note that a usual vector space $V$-valued differential form on $M$ can be viewed as an $M\times V\to M$ vector bundle valued differential form.

As a special case of the above definition, we have:
\begin{defn}\label{defn:Lie algebroid valued differential form}
     Let $p\colon A\to M$ be a Lie algebroid (\cref{defn:lie algebroid}) and $P$  a manifold with a smooth map $a\colon P \rightarrow M.$  Then a bundle map between $TP$ and $a^*A=P \times_{X_0} A$ will be called a \textit{Lie Algebroid valued differential $1$-form} on $P$.
\end{defn}
\begin{example}\label{ExamP:Ordinary Lie algebra valued form}
A common Lie algebra  $\mathcal L$-valued differential form on $M$ is a Lie algebroid $\mathcal L\to *$-valued differential form (see \cref{examp:Ordinary Lie algebra}).

\end{example}

\begin{example}\label{examp:Associated Lie algebra valued form}
Particularly for our interest,  consider the  associated Lie algebroid of a Lie groupoid $\mathbb{X}: X_1 \rightrightarrows X_0$
of \cref{example:associated Lie algebroid}. If $P\to M$ is a principal $\mbbX$-bundle, then we have Lie algebroid $A=1^*\ker ds$-valued   differential $1$-forms given by bundle maps between $TP$ and $a^*A=P \times_{X_0} A.$
\end{example}


   Let  $\pi:P \rightarrow M$ be a $\mbbX$-bundle. Fix an Ehresmann connection $\sigma$ on $\mathbb{X}$.
   Then we saw  in \cref{section:Atiyah sequence} that  there exist quasi actions of $\mathbb{X}$ on both the vector bundles $P \times_{X_0} A$ and $TP.$  
\begin{defn}[Connection $1$-form]\label{defn:connection as 1-form}
    Let $\mbbX$ be a Lie groupoid and $\pi\colon P \rightarrow M$  a principal $\mathbb{X}$-bundle with anchor map $a\colon P \rightarrow X_0.$ Let $A \rightarrow X_0$ be the associated Lie algebroid of $\mathbb{X}$. Fix a connection $\sigma$ on $\mathbb{X}.$  Let $\lambda, \tau$ be the respective quasi actions on $A$ and $TP$ respectively defined  in \cref{quasi action of X on A}, \cref{eqn:quasi action of X on TP} induced by $\sigma.$ Then, a \textit{connection $1$-form} on $P$ is a $A$-valued $1$-form  $\omega\colon TP \rightarrow P \times_{X_0} A$ satisfying the following conditions:
       \begin{eqnarray}
       &&\omega_{pg}(\tau_g(u))=\lambda_{g^{-1}}\bigl(\omega_p(u)\bigr),\nonumber\\
       &&{\rm for\, every}\,\, u \in T_pP\, \, {\rm and}\, \, (p, g)\, \, {\rm such\, that}\, \, a(p)=t(g). \label{eqn:X-equivariance of connection form}\\
       &&\omega_p \circ d\delta_p(\alpha)=\alpha,\, {\rm for\, \,  every} \, \alpha \in A_{a(p)}. \label{eqn:connection form is a section of fundamental vector field map}
   \end{eqnarray}
    We often refer to the pair $(\omega, \sigma)$ as a \textit{connection pair on the Lie groupoid $\mathbb{X}$-bundle} $\pi\colon P\to M.$
\end{defn}
Though  \cref{defn:connection as 1-form} appears to be a straightforward generalization of the classical definition, its remarkable adaptability to the framework of representation up to homotopy should be evident from the following result, which will be used later on to relate a connection $1$-form with the splitting of the Atiyah sequence in \cref{Def:Atiyah sequence for Lie groupoid bundle}. 
\begin{prop}\label{rem:preservation of error by connection form}
      Suppose $\omega: TP \rightarrow P \times_{X_0} A$ is a connection form associated with an Ehresmann connection $\sigma$.  Then,
      \begin{equation}
          \begin{split}
              \omega_{pgh}\bigl(\tau_{gh}(u)-\tau_h\tau_g(u)\bigr)={} & \lambda_{h^{-1}g^{-1}}\omega_p(u)-\lambda_{h^{-1}}\lambda_{g^{-1}}\omega_p(u),
          \end{split}
      \end{equation}
    for every $u \in T_pP$ and $(p,g,h)$ satisfying $a(p)=t(g),s(g)=t(h).$ In other words,
    \begin{equation}
        \begin{split}
            \omega_{pgh}\bigl( 0.l_{gh}K_{\sigma}^{bas}(h^{-1}, g^{-1})(da)_p(u)\bigr)={} & K_{\sigma}^{bas}(h^{-1},g^{-1})(dt)(\omega_p(u)).
        \end{split}
    \end{equation}
\end{prop}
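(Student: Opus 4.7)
The plan is to view the identity as a direct consequence of the $\mathbb{X}$-equivariance of $\omega$ combined with the known expressions for the failure of functoriality of the two quasi actions $\tau$ (on $TP$) and $\lambda^0$ (on $A$). The two displayed equations in the statement are equivalent after substituting the explicit formulas for the errors; so it suffices to establish the first.

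First I would apply the equivariance condition \eqref{eqn:X-equivariance of connection form} to the composite arrow $gh\in X_1$ (which is composable with $p$ because $a(p)=t(g)=t(gh)$) to obtain
\begin{equation*}
\omega_{pgh}\bigl(\tau_{gh}(u)\bigr)=\lambda_{(gh)^{-1}}\bigl(\omega_p(u)\bigr)=\lambda_{h^{-1}g^{-1}}\bigl(\omega_p(u)\bigr).
\end{equation*}
Next I would apply the same equivariance condition twice in succession: once at $(p,g)$ to get $\omega_{pg}(\tau_g(u))=\lambda_{g^{-1}}\omega_p(u)$, and then at $(pg,h)$, noting that $a(pg)=s(g)=t(h)$, to get
\begin{equation*}
\omega_{pgh}\bigl(\tau_h(\tau_g(u))\bigr)=\lambda_{h^{-1}}\bigl(\omega_{pg}(\tau_g(u))\bigr)=\lambda_{h^{-1}}\lambda_{g^{-1}}\bigl(\omega_p(u)\bigr).
\end{equation*}
Subtracting these two equalities and using linearity of $\omega_{pgh}$ yields the first claimed identity.

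For the second (more concrete) form, I would invoke the explicit expression \eqref{eqn:simplified version of error of quasi action on TP} for the error of $\tau$, namely $\tau_{gh}(u)-\tau_h\tau_g(u)=0.l_{gh}\bigl(K_{\sigma}^{bas}(h^{-1},g^{-1})(da)_p(u)\bigr)$, on the left-hand side, and the counterpart \eqref{eqn:error of quasi action on A} for the error of $\lambda^0$, namely $\lambda_{h^{-1}g^{-1}}(\alpha)-\lambda_{h^{-1}}\lambda_{g^{-1}}(\alpha)=K_{\sigma}^{bas}(h^{-1},g^{-1})(dt)(\alpha)$, applied to $\alpha=\omega_p(u)$, on the right-hand side. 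Equating the two gives the second displayed identity. I do not foresee any serious obstacle here: the only mild subtlety is bookkeeping with the composability conditions (checking $a(pg)=s(g)=t(h)$ so that the second application of equivariance is valid), and ensuring the identification $a(pgh)=s(h)$ agrees with the base point of the fibre of $A$ in which both sides of the second equation live.
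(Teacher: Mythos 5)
Your proposal is correct and follows essentially the same route as the paper: apply the equivariance condition \eqref{eqn:X-equivariance of connection form} once to the composite arrow $gh$ and once in succession at $(p,g)$ and $(pg,h)$, then subtract. The substitution of the explicit error formulas \eqref{eqn:simplified version of error of quasi action on TP} and \eqref{eqn:error of quasi action on A} to obtain the second displayed identity is also exactly what the paper intends.
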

\begin{proof}
    First observe that, by applying \cref{eqn:X-equivariance of connection form} for the triple $(\tau_g(u),pg,h)$, we get that
    \begin{equation}\label{eqn:computation of connection form quasi action of g followed by h}
        \begin{split}
        \omega_{(pg)h}\Bigl(\tau_h\bigl(\tau_g(u)\bigr)\Bigr)={} & \lambda_{h^{-1}}\Bigl(\underbrace{\omega_{pg}\bigl(\tau_g(u)\bigr)}\Bigr) \\
        &[{\rm The \, underbraced \, term \,}=\lambda_{g^{-1}}\omega_p(u) \, {\rm by \, \cref{eqn:X-equivariance of connection form}}] \\
        ={} & \lambda_{h^{-1}}\lambda_{g^{-1}}\omega_p(u).
         \end{split}
    \end{equation}
    Also, by \cref{eqn:X-equivariance of connection form} for the triple $\bigl(u,p,gh\bigr)$ we get that,
    \begin{equation}\label{eqn:computation of connection form quasi action of gh}
        \begin{split}
            \omega_{pgh}\bigl(\tau_{gh}(u)\bigr)={} & \lambda_{h^{-1}g^{-1}}\omega_p(u).
        \end{split}
    \end{equation}
    Hence, the result follows from \cref{eqn:computation of connection form quasi action of g followed by h} and \cref{eqn:computation of connection form quasi action of gh}.
\end{proof}
It would often be convenient to work with the following equivalent definition of a principal Lie groupoid bundle connection. The result obtained in  \cref{thm:equvariance of fundamental vector field map} is pivotal in ensuring this correspondence is almost identical to that of a classical principal bundle.
\begin{thm}\label{thm:chartacterization of connection form in terms of subbundle}
    Let $\pi\colon P \rightarrow M$ be a principal $\mathbb{X}$-bundle and $\sigma$ a connection on $\mathbb{X}$.  Then there is a one-to-one correspondence between the set of connection $1$-$forms$ $\omega$ to the set of all smooth subbundles $\mathcal{H}$ of $TP\to P$ satisfying 
    \begin{align}
        TP={} & \mathcal{H} \oplus {\ker} d\pi,\label{eqn:subbundle splitting ker dpi}\\
        \mathcal{H}_{pg}={} &\mathcal{H}_p.g, \qquad {\rm for}\,\, a(p)=t(g),\label{eqn:stability of connection under quasi action}
        \end{align}
where $\mathcal{H}_p.g$ denotes the restricted  quasi action of $\mathbb{X}$ on $\mathcal{H}$ induced by $\sigma.$
        
\end{thm}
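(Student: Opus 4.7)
The plan is to construct the two maps of the bijection explicitly and then verify they are mutual inverses, following the pattern of the classical principal $G$-bundle case but carefully using the results of Section~\ref{section:Atiyah sequence}, particularly the $\mathbb{X}$-equivariance of the fundamental vector field map (\cref{thm:equvariance of fundamental vector field map}) and its inverse (\cref{cor:preservation of error by inverse of fundamental vector field map}).

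\textbf{From $\omega$ to $\mathcal{H}$.} Given a connection $1$-form $\omega$, I would define
\[
\mathcal{H}_p := \ker \omega_p \subset T_pP.
\]
The splitting \eqref{eqn:subbundle splitting ker dpi} follows from \cref{eqn:connection form is a section of fundamental vector field map}: the map $\omega_p \circ d\delta_p = \mathrm{Id}_{A_{a(p)}}$ implies $\omega_p$ restricted to $\ker d\pi_p = \mathrm{Im}\,d\delta_p$ is a linear isomorphism onto $A_{a(p)}$, so $\ker\omega_p \cap \ker d\pi_p = 0$ and a dimension count gives $T_pP = \mathcal{H}_p \oplus \ker d\pi_p$. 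Smoothness of the subbundle is immediate from smoothness of $\omega$ and $d\delta$. The equivariance \eqref{eqn:stability of connection under quasi action} reduces to checking $\tau_g(\mathcal{H}_p) \subseteq \mathcal{H}_{pg}$: if $u\in\mathcal{H}_p$ then $\omega_{pg}(\tau_g(u)) = \lambda_{g^{-1}}(\omega_p(u)) = 0$ by \eqref{eqn:X-equivariance of connection form}, and the reverse inclusion follows symmetrically using $g^{-1}$ together with $\lambda_g \lambda_{g^{-1}} = \mathrm{Id}$ on $A_{a(p)}$, which holds because the quasi action $\lambda$ is unital (this is where one invokes \cref{lem:crucial lemma proving unitality of quasi action} implicitly).

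\textbf{From $\mathcal{H}$ to $\omega$.} Given a subbundle $\mathcal{H}$ satisfying \eqref{eqn:subbundle splitting ker dpi} and \eqref{eqn:stability of connection under quasi action}, let $\mathrm{pr}_{\mathrm{ver}}\colon TP \to \ker d\pi$ denote the projection onto the vertical component with respect to the decomposition. Using that $d\delta_p\colon A_{a(p)} \to \ker(d\pi)_p$ is a linear isomorphism (being an isomorphism between fibre vector bundles), define
\[
\omega_p(u) := (d\delta_p)_{1_{a(p)}}^{-1}\bigl(\mathrm{pr}_{\mathrm{ver}}(u)\bigr).
\]
Condition \eqref{eqn:connection form is a section of fundamental vector field map} is then immediate because $d\delta_p(\alpha) \in \ker d\pi$ is already vertical. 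For \eqref{eqn:X-equivariance of connection form}, the crucial observation is that the quasi action $\tau_g$ on $TP$ preserves the splitting: it preserves $\ker d\pi$ because $\pi \circ \mu = \pi \circ \mathrm{pr}_1$ implies $d\pi \circ \tau_g = d\pi$, and by hypothesis it sends $\mathcal{H}_p$ onto $\mathcal{H}_{pg}$. Therefore $\tau_g$ commutes with $\mathrm{pr}_{\mathrm{ver}}$, and using \cref{eqn:equivariance of inverse of fundamental vector field map} from \cref{cor:preservation of error by inverse of fundamental vector field map} one computes
\[
\omega_{pg}(\tau_g(u)) = (d\delta_{pg})^{-1}\bigl(\tau_g(\mathrm{pr}_{\mathrm{ver}}(u))\bigr) = \lambda_{g^{-1}}\bigl((d\delta_p)^{-1}(\mathrm{pr}_{\mathrm{ver}}(u))\bigr) = \lambda_{g^{-1}}(\omega_p(u)).
\]

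\textbf{Mutual inverses.} Starting from $\omega$, set $\mathcal{H} = \ker\omega$; the associated $1$-form sends $u = h + v$ (with $h\in\mathcal{H}_p$, $v\in\ker d\pi_p$) to $(d\delta_p)^{-1}(v)$, which equals $\omega_p(u) = \omega_p(h) + \omega_p(v) = \omega_p(v)$ by \eqref{eqn:connection form is a section of fundamental vector field map} applied to $v = d\delta_p(\alpha)$ for $\alpha = (d\delta_p)^{-1}(v)$. Conversely, starting from $\mathcal{H}$, the kernel of the associated $\omega$ is exactly the $\mathcal{H}$-component in the splitting, recovering $\mathcal{H}$. The main technical point — and the only nontrivial step — is the equivariance in the $\mathcal{H} \mapsto \omega$ direction, which is where the nonobvious equivariance of the fundamental vector field map (\cref{thm:equvariance of fundamental vector field map}), a result requiring the full basic curvature calculation, is indispensable.
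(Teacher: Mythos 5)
Your construction is the same as the paper's: $\omega\mapsto\ker\omega$ in one direction, and $u=u_1+u_2\mapsto(d\delta_p)_{1_{a(p)}}^{-1}(u_2)$ in the other, with the equivariance of the fundamental vector field map (\cref{thm:equvariance of fundamental vector field map}, equivalently \cref{cor:preservation of error by inverse of fundamental vector field map}) doing the real work in the $\mathcal{H}\mapsto\omega$ direction. Almost everything checks out, and you are in fact more explicit than the paper about verifying the \emph{equality} $\mathcal{H}_{pg}=\mathcal{H}_p.g$ rather than mere stability.

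However, the justification you give for that reverse inclusion is false as stated. You claim $\lambda_g\lambda_{g^{-1}}={\rm Id}$ on $A_{a(p)}$ ``because the quasi action is unital.'' Unitality only gives $\lambda_{1_{t(g)}}={\rm Id}$; since the quasi action is not functorial, \cref{eqn:error of quasi action on A} gives
\begin{equation*}
\lambda^0_{1_{t(g)}}(\alpha)-\lambda^0_g\lambda^0_{g^{-1}}(\alpha)=K_{\sigma}^{bas}(g,g^{-1})(dt)(\alpha),
\end{equation*}
so $\lambda_g\lambda_{g^{-1}}$ differs from the identity by the basic curvature, which does not vanish unless the Lie groupoid connection is Cartan. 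The same caveat applies to $\tau_g\tau_{g^{-1}}$ on $TP$. The conclusion is nevertheless salvageable: by \cref{Rem:error of quasi action on TP lies insie vertical} the deviation $\tau_{g^{-1}g}(w)-\tau_g\tau_{g^{-1}}(w)=w-\tau_g\tau_{g^{-1}}(w)$ is vertical, and for $w\in\mathcal{H}_{pg}=\ker\omega_{pg}$ both $w$ and $\tau_g\tau_{g^{-1}}(w)$ lie in $\ker\omega_{pg}$ by \cref{eqn:X-equivariance of connection form}, so their difference lies in $\mathcal{H}_{pg}\cap\ker(d\pi)_{pg}=0$ and hence $w=\tau_g\bigl(\tau_{g^{-1}}(w)\bigr)\in\tau_g(\mathcal{H}_p)$. (Alternatively, a dimension count using injectivity of $\tau_g$ suffices.) Replace the appeal to $\lambda_g\lambda_{g^{-1}}={\rm Id}$ with this argument and the proof is complete.
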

\begin{proof}
    Let $\mathcal{H}\subset TP$ be a subbundle satisfying conditions \cref{eqn:subbundle splitting ker dpi} and \cref{eqn:stability of connection under quasi action}.  Splitting a vector $u$ as $u=u_1+u_2$, $u_1 \in \mathcal{H}_p$ and $u_2 \in (\ker d\pi)_p$, we define $\omega_p(u)=(d\delta_p)_{1_{a(p)}}^{-1}(u_2)$, using the diffeomorphism $d\delta:P \times_{X_0} A \rightarrow TP.$ 
    We verify that $\omega$ is a connection $1$-form by observing that $\mathcal{H}$ and $\ker d\pi$ both are stable under the quasi action.  On the other hand, by \cref{thm:equvariance of fundamental vector field map} we obtain $(d\delta_{pg})\bigl(\lambda_{g^{-1}}(d\delta_p)^{-1}(u_2)\bigr)=\tau_g(d\delta_p)\circ(d\delta_p)^{-1}(u_2)=\tau_g(u_2)$.  The diffeomorphism of $(d\delta)$ implies $\omega_{pg}(\tau_g(u))=\lambda_{g^{-1}}\omega_p(u)$. Verification of  \cref{eqn:connection form is a section of fundamental vector field map} is immediate from the definition of $\omega$.  
    
   Conversely, given a connection $1$-form $\omega$, we define $\mathcal{H}:=\ker \omega$, which is stable under the quasi action by  \cref{eqn:X-equivariance of connection form}.   Any $u \in T_pP$ can be split into  the vectors $u-d\delta_p \circ \omega_p(u)\in \ker \omega$ and   $d\delta_p \circ \omega_p(u)\in \ker d\pi.$  
\end{proof}

\begin{rem}\label{rem:quasi action becoming full action}
    It should be noted that if $\mathcal{H}$ is a connection on a principal $\mathbb{X}$-bundle $\pi\colon P \rightarrow M$, then the restricted quasi action of $\mathbb{X}$ on $\mathcal{H}$ is in fact a full action.  Consider $u \in \mathcal{H}_p$ to see this. Then \cref{Rem:error of quasi action on TP lies insie vertical} implies $\tau_{gh}(u)-\tau_h\tau_g(u)$ contained in  $(\ker d\pi)_{pgh}$.  But, by \cref{eqn:stability of connection under quasi action} both $\tau_{gh}(u)$ and $\tau_h\tau_g(u)$ are inside $\mathcal{H}_{pgh}$.  Hence, we must have $\tau_{gh}(u)=\tau_h\tau_g(u)$, for every $u \in \mathcal{H}_p$ and $(p, g)$ satisfying $a(p)=t(g)$.
\end{rem}

\begin{rem}\label{rem:Casual Moerdijk connection}
Let $\pi\colon P \rightarrow M$ be a principal $\mathbb{X}$-bundle and  $\mathcal{F}$  a foliation on the base manifold $M$.
 One can show that the pullback foliation $(d\pi)^{-1}(\mathcal{F})\subset TP$ on $P$ is stable under the restriction of $\tau$ onto it, and thus it makes sense to consider a splitting $(d\pi)^{-1}(\mathcal{F})={\mathcal H}_{\mathcal F}\oplus \ker d\pi.$ In fact this provides a slightly general version of  \textit{$\mathcal{F}$-partial connection} on  $\pi\colon P \rightarrow M$  in \cite{moerdijk2003introduction}. The relation between our connection on a principal $\mathbb{X}$-bundle $P\to M$ with the foliation and its consequences will be explored in more detail in our upcoming paper.
\end{rem}

Recall that we have seen in \cref{prop: fibred category related to quasi equivariant bundle} that  a quasi equivariant $\mbbX$-bundle upto homotopy $\bigl(\pi:E=E^0\oplus E^1 \rightarrow P, \xi, \tau, \Upsilon\bigr)$ produces a fibered category  $\mathbb{X}^{{\rm op}}\to {\rm Cat}$.  Hence, by \cref{prop:P times A equivariant} and \cref{prop: TP plus a^*TX_0 is a equivariant bundle upto homotopy}, fixing an Ehresmann connection $\sigma$ on $\mathbb{X},$ we obtain a pair of fibered categories $F_{{\rm Ad}(P)}, \, F_{{\rm At}(P)}:\mathbb{X}^{\rm op} \rightarrow {\rm Cat},$ respectively corresponding to the equivariant bundles upto homotopy $P \times_{X_0} A\oplus a^*TX_0$ and $TP \oplus a^*TX_0.$  Then,  the map $(d\delta,{\rm Id}):P \times_{X_0} A \oplus a^*TX_0 \rightarrow TP \oplus a^*TX_0$ defined in \cref{prop: morphism between P times A and TP bundles upto homotopy}, produces a morphism of fibred categories $\eta:F_{{\rm Ad}(P)} \Longrightarrow F_{{\rm At}(P)}$ with a functor $\eta_x:F_{{\rm Ad}(P)}(x) \rightarrow F_{{\rm At}(P)}(x)$, for each $x \in X_0$ defined as $(\alpha \oplus v, \gamma) \mapsto (d\delta(\alpha)\oplus v, \gamma)$ and $(\alpha \oplus v, \gamma, \alpha' \oplus v', \beta) \mapsto (d\delta(\alpha) \oplus v, \gamma,  d\delta(\alpha') \oplus v', \beta). $  Moreover, $\eta_x$ is linear over the vector components.

The following proposition characterises a connection on a principal $\mbbX$-bundle $\pi:P \rightarrow M$ as a morphism of fibred categories from $F_{{\rm At}(P)}$ to $F_{{\rm Ad}(P)}.$  
\begin{prop}\label{prop: natural transformation between F and G coming from ddelta and omega}
  Let $\pi:P \rightarrow M$ be a principal $\mbbX$-bundle and $\sigma$  an Ehresmann connection on $\mbbX.$  Then,  a connection form $\omega:TP \rightarrow P \times_{X_0} A$ of \cref{defn:connection as 1-form} produce a morphism of fibred categories $\Theta:F_{{\rm At}(P)} \Longrightarrow F_{{\rm Ad}(P)}$ satisfying $\Theta \circ \eta={} {\rm Id}_{F_{{\rm Ad}(P)}},$ with a diffeologically smooth functor $\Theta_x:F_{{\rm At}(P)}(x) \rightarrow F_{{\rm Ad}(P)}(x)$ linear on appropriate components, for each $x \in X_0.$  Conversely, if $\Theta:F_{{\rm At}(P)} \Longrightarrow F_{{\rm Ad}(P)}$ is a morphism of fibred categories satisfying linearity for the diffeologically smooth functor $\Theta_x:F_{{\rm At}(P)}(x) \rightarrow F_{{\rm Ad}(P)}(x)$  and $\Theta \circ \eta={} {\rm Id}_{F_{Ad}(P)}$, then there is a unique connection form $\omega:TP \rightarrow P \times_{X_0} A.$
  \end{prop}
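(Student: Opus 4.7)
My plan is to establish a bijective correspondence between connection $1$-forms $\omega$ and morphisms $\Theta$ of fibred categories satisfying the stated conditions, by explicit constructions in both directions.

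\textbf{Forward direction.} Given a connection $1$-form $\omega$, define $\Theta_x\colon F_{{\rm At}(P)}(x) \to F_{{\rm Ad}(P)}(x)$ on objects by $(u^0\oplus u^1,\gamma) \mapsto (\omega(u^0)\oplus u^1,\gamma)$ and on arrows by $(u^0\oplus u^1,\gamma,v^0\oplus v^1,\beta) \mapsto (\omega(u^0)\oplus u^1,\gamma,\omega(v^0)\oplus v^1,\beta)$. $\mathbb{R}$-linearity of $\omega$ gives functoriality and the required linearity of $\Theta_x$ on vector components; smoothness of $\omega$ gives diffeological smoothness. The splitting $\Theta\circ\eta={\rm Id}_{F_{{\rm Ad}(P)}}$ is exactly \cref{eqn:connection form is a section of fundamental vector field map}: $\Theta_x(\eta_x(\alpha\oplus v,\gamma))=\Theta_x(d\delta(\alpha)\oplus v,\gamma)=(\omega(d\delta(\alpha))\oplus v,\gamma)=(\alpha\oplus v,\gamma)$. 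For the pseudo-functor compatibility with respect to an arrow $g\colon x\to y$ in $\mathbb{X}$, commutation of $\Theta$ with $F(g)$ reduces to the two identities $\omega(\tau_g^0(u^0))=\lambda_{g^{-1}}^0(\omega(u^0))$ (which is \cref{eqn:X-equivariance of connection form}) and $\tau_g^1=\lambda_{g^{-1}}^1$ (see \cref{eqn: quasi action of X on a^*TX_0}). Finally, compatibility with the $2$-cells connecting $F(gh)$ and $F(h)\circ F(g)$, which are built out of the homotopy maps $\Upsilon$ attached to each equivariant bundle up to homotopy, follows directly from \cref{rem:preservation of error by connection form}: $\omega$ transports the $TP$-error $\tau_{gh}(u)-\tau_h\tau_g(u)$ to the corresponding $A$-error $\lambda_{h^{-1}g^{-1}}\omega(u)-\lambda_{h^{-1}}\lambda_{g^{-1}}\omega(u)$.

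\textbf{Reverse direction.} Given $\Theta$ satisfying the hypotheses, observe first that $\eta_x(0\oplus v,1_x)=(0\oplus v,1_x)$, so $\Theta\circ\eta={\rm Id}$ forces $\Theta_x(0\oplus v,1_x)=(0\oplus v,1_x)$. The linearity of $\Theta_x$ on the vector components (for fixed $\gamma$), the fact that $\Theta$ is a morphism over $M$ preserving base points, and additivity then force $\Theta_x(u\oplus v,1_x)=(\omega_p(u)\oplus v,1_x)$ for a uniquely determined linear $\omega_p\colon T_pP \to A_{a(p)}$ (where $p$ is the base point of $u$). Setting $\omega(u):=\omega_p(u)$ yields a smooth bundle map $\omega\colon TP\to P\times_{X_0}A$. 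Smoothness of $\Theta$ gives smoothness of $\omega$; the splitting $\Theta\circ\eta={\rm Id}$ yields $\omega\circ d\delta={\rm Id}$, i.e.\ \cref{eqn:connection form is a section of fundamental vector field map}; the pseudo-functor compatibility of $\Theta$ with each $F(g)$ unpacks to the equivariance \cref{eqn:X-equivariance of connection form}. Uniqueness is automatic since $\omega$ is forced by $\Theta$ on the diagonal family $\{(u\oplus 0,1_{a(p)})\}_{u\in T_pP}$. The two constructions are clearly mutually inverse by direct comparison.

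\textbf{Main obstacle.} The principal difficulty lies in the reverse direction, specifically in showing that ``linearity on appropriate components'' together with the morphism-of-fibred-categories structure is strong enough to force $\Theta_x(u\oplus 0,1_x)$ to lie in the slice $(\ast\oplus 0,1_x)$ (with the $\gamma$-coordinate and the $T_xX_0$-coordinate preserved). This requires carefully decoding which linear-algebraic data on the categorical objects are preserved, and in particular ensuring that the base point in $P$ is respected. The compatibility of $\Theta$ with the $2$-cells in the forward direction is also a delicate check: it depends crucially on the precise form of the homotopy maps $\Upsilon_{{\rm Ad}(P)}$ and $\Upsilon_{{\rm At}(P)}$ and the fact that \cref{rem:preservation of error by connection form} matches them up exactly under $\omega$.
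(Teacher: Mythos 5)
Your proposal is correct and follows essentially the same route as the paper: the forward direction defines $\Theta_x$ by applying $\omega$ to the $TP$-component exactly as the paper does, and the reverse direction recovers $\omega_p(u)$ from the vector component of $\Theta_{a(p)}(u\oplus 0, 1_{a(p)})$ (the paper simply takes ${\rm pr}_1$ of that component, sidestepping the slice-preservation issue you flag as the main obstacle). The extra detail you supply — using \cref{eqn:connection form is a section of fundamental vector field map} for the splitting, \cref{eqn:X-equivariance of connection form} for compatibility with $F(g)$, and \cref{rem:preservation of error by connection form} for the $2$-cell compatibility — is precisely the content the paper dismisses as a straightforward verification.
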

\begin{proof}
     For each $x \in X_0,$  the functor $\Theta_x:F_{{\rm At}(P)}(x) \rightarrow F_{{\rm Ad}(P)}(x)$, given as $ (u\oplus v, \gamma) \mapsto (\omega(u) \oplus v, \gamma),$ and $(u \oplus v, \gamma, u' \oplus v', \beta) \mapsto (\omega(u) \oplus v, \gamma, \omega(u') \oplus v', \beta).$  Verification is straightforward that $\Theta$ is a natural transformation of pseudo functors from $F_{{\rm At}(P)}$ to $F_{{\rm Ad}(P)}.$   
     
     Conversely, given  $\Theta:F_{{\rm At}(P)} \Longrightarrow F_{{\rm Ad}(P)}$ satisfying $\Theta \circ \eta={} {\rm Id}_{{\rm Ad}(P)}$, we associate a connection form  $\omega:TP \rightarrow P \times_{X_0} A,$ $\omega_p(u)={}{\rm pr}_1 \circ \bigl((\Theta_{a(p)})^1(u\oplus 0, 1_{a(p)})\bigr)$, for every $u \in T_pP$, where ${\rm pr}_1:P \times_{X_0} A \oplus a^*TX_0 \rightarrow P \times_{X_0} A$ is the first projection map and $(\Theta_{a(p)})^1$ is the vector component map of $\Theta_{a(p)}.$
     
\end{proof}

The principal Lie groupoid bundle connection provides a general framework for various natural examples.
\begin{example}
    Viewing an ordinary principal $G$-bundle  $\pi\colon P\to M$ as a Lie groupoid $[G \rightrightarrows \star]$-bundle as in \cref{example:ordinary bundle}, we note that $[G \rightrightarrows \star]$ admits only the trivial connection $\mathbb{H}=0,$ and thus  quasi-equivariant bundles $P \times_{\star} A$ and $TP$  reduce to the adjoint bundle ${\rm Ad}(P)$ and the Atiyah bundle ${\rm At}(P)$ respectively.  Hence, a connection on the $G$-bundle $\pi\colon P\to M$  is same as a connection  on the principal $[G \rightrightarrows \star]$-bundle $\pi\colon P \rightarrow M.$
\end{example}
\begin{example}    Let $\mathbb{H}$  be a Cartan connection on a Lie groupoid $\mbbX;$ that means it is an Ehresmann connection (\cref{defn: connection on a lie groupoid}) with a left splitting $\sigma$ of \cref{sequence diagram:short exact sequence with right translations} satisfying     $\sigma_{gh}(v)={}  \bigl(gh,\sigma_g(\lambda_h(v).\sigma_h(v)\bigr), {\rm{for \, every}} \, g,h \in X_1,v \in T_{s(h)}X_0.$ 
 Note that $\mathbb{H}=\text{Im}(\sigma)$.  Now, consider the unit principal Lie groupoid bundle of \cref{example:unit bundle}.  
  One sees that the sequence in \cref{sequence diagram:short exact sequence with fundamental vector field} is the same as in  \cref{sequence diagram:short exact sequence with left translations}. Now, recall by \cref{Remark:Sequence with respect to target} the subbundle defined by $\tilde{\mathbb{H}}:=(di)(\mathbb{H})$ complements  $\ker dt$:
$TX_1=\tilde{\mathbb{H}}\oplus \ker dt$
  and the corresponding left splitting $\tilde \sigma$ of \cref{sequence diagram:short exact sequence with left translations} is related to $\tilde{\mathbb{H}}$ by ${\rm Im}(\tilde \sigma)=\tilde{\mathbb{H}}.$

  To see that  $\tilde{\mathbb{H}}$ is in fact a connection on the principal $\bigl(\mathbb{X}, \sigma\bigr)$-bundle $t:X_1 \rightarrow X_0$,  we take $(g, h, u)$ satifying $s(h)=t(g)$ and  $u \in \tilde{\mathbb{H}}_h$. Then $u=\tilde{\sigma}_h(v)=(di)_{h^{-1}}\sigma_{h^{-1}}(v)$, for some $v \in T_{t(h)}X_0$. Hence, the induced quasi-action gives
     \begin{equation}
     \begin{split}
         \tau_g(u)={} & \bigl(hg,u.(di)_{g^{-1}}\sigma_{g^{-1}}(ds)_h(u)\bigr) \\
         ={} & \bigl(hg,\tilde{\sigma}_{hg}(v)\bigr),
         \end{split}
         \end{equation}
         implying $\tilde{\mathbb{H}}_h g=\tilde{\mathbb{H}}_{hg}.$  If we have started with an Ehresmann connection $\mathbb{H}$, then a connection on the unit bundle $t:X_1 \rightarrow X_0$ is a smooth subbundle $\mathcal{H}$ satisfying 
         \begin{equation}
             \begin{split}
                 & TP={}  \mathcal{H} \oplus {\rm{ker}} \, dt,  \\
                 & \tau_g(u)={} u.(di)_{g^{-1}}\sigma_{g^{-1}}(ds)_{h}(u) \in \mathcal{H}_{hg},
             \end{split}
         \end{equation}
         for every $(h,u) \in TX_1$ such that $s(h)=t(g).$
\end{example}
The next example shows that our notion of connection behaves well under pullback.
\begin{example}
    Suppose $\pi:P \rightarrow M$ is a principal $\bigl(\mbbX, \sigma\bigr)$-bundle equipped with a connection $\mathcal{H}$.  Let $f: N \rightarrow M$ be a smooth map.  Then by \cref{example:pullback principal bundle},  ${\text{pr}}_1:N \times_{M} P \rightarrow N$ is a principal $\bigl(\mathbb{X},\sigma\bigr)$ bundle.  
     The subbundle  $$\tilde{\mathcal{H}}=\bigl\{(n, u, p, v) \big | u \in  T_{n}N, v \in \mathcal{H}_p, f(n)=\pi(p),(df)_n(u)=(d\pi)_p\bigr\}\subset T(N\times _{M} P)$$ is a connection  on the pullback principal bundle.  
     To see this, observe that 
    $\ker {d\text{pr}}_1=\{(n, 0),(p, v) \mid f(n)=\pi(p), v \in {{\text{ker}}} \, (d\pi)_p\},$  and  any vector $(u,v) \in T_{(n,p)}(N \times_{M} P)$ can be expressed as  $(u, v)=(u, v_1)+(0, v_2)$, where $v_1 \in \mathcal{H}_p$ and $v_2 \in {{\text{ker}}} (d\pi)_p$.  Moreover, suppose $(u,v) \in \tilde{\mathcal{H}}_{(n,p)} \cap {\text{ker}} \, (d\text{pr}_1)_{(n,p)}$, then $u=0$ and $v \in {\text{ker}} \, (d\pi)_p \cap \mathcal{H}_p$.  Since $\mathcal{H}$ is a connection on $\pi:P \rightarrow M$, we also have  $v=0$.  Thus, $$T(N \times_{M} P)=\tilde{\mathcal{H}} \oplus {\text{ker}} \, d\text{pr}_1.$$ Now  $\mathbb{X}$ acts on $N \times_{X_0} P$ by $(n, p) g=(n, pg)$ for the anchor map $a \circ \text{pr}_2$.
   For  $(u,v) \in \tilde{\mathcal{H}}_{(n, p)}$ and $g \in X_1$ satisfying $a(p)=t(g)$, by definition
\begin{align}
    \tau_g(u,v)={} & \bigl(u,v\bigr).\bigl((di)_{g^{-1}}\sigma_{g^{-1}}(d(a \circ \text{pr}_2))_{(n,p)}(u,v)\bigr), \notag \\
    ={} & \bigl(u\bigr).\bigl(v.(di)_{g^{-1}}\sigma_{g^{-1}}(da)_p(v)\bigr). 
\end{align}
Since, $\mathcal{H}$ is a connection, it follows that $\bigl(v.(di)_{g^{-1}}\sigma_{g^{-1}}(da)_p(v)\bigr) \in \mathcal{H}_{pg}$ and in turn $\tau_g(u, v) \in \tilde{\mathcal{H}}_{(n, pg)}.$
\end{example}
\begin{example}\label{example:Connection on torsor of action groupoid}
Connections on a principal $G$-bundle $\pi:P \rightarrow M$ are in one-to-one correspondence with the connections on the principal Lie groupoid bundle  $[P \times G \rightrightarrows P]$ bundle $\pi:P \rightarrow M$  in \cref{example:ordinary bundle with action groupoid}.    For a connection $\mathcal{H}$ on the principal $G$-bundle $\pi:P \rightarrow M,$  the $G$ invariant splitting $TP ={}  \mathcal{H} \oplus {\rm{ker}} \, d\pi,$ defines a connection on the $[P\times G\rightrightarrows P]$-bundle $\pi\colon P\to M$  associated to the canonical connection ${\mathbb H}=TG\subset T(P\times G)$ on $[P\times G\rightrightarrows P]$ in \cref{example:connection on action groupoid}.
Converse is straightforward. 
\end{example}


\subsection{Principal Lie groupoid bundle connections for different choices of Ehresmann connections}\label{subsec:Relation between connection forms induced by different choices of Ehresmann connections}

The entire set-up of connections on a principal Lie groupoid $\mbbX$-bundle $\pi\colon P\to M$ of the preceding sections depended on a choice of an Ehresmann connection on the Lie groupoid $\mathbb{X}$ to induce quasi action of $\mathbb{X}$ on both $P \times_{X_0} A$ and $TP$.   The natural question is how different choices of Ehresmann connections affect connections on a Lie groupoid bundle. To be more precise, suppose $\pi\colon P\to M$ a principal $\mbbX$-bundle, and $(\omega, \sigma)$ and $(\tilde \omega, \tilde \sigma)$  connection pairs for the Lie groupoid connections $\sigma$ and $\tilde{\sigma}$ on $\mathbb{X}$ respectively. We will investigate the relation between $\omega$ and $\tilde{\omega}.$

Let $A=1^* \ker ds\to X_0$  be the Lie algebroid of $\mbbX,$ and $\tau, \lambda$ be the quasi-actions of $\mathbb{X}$ on $TP$ and $A,$ respectively induced by choice of the Lie groupoid connection $\sigma$ (see \cref{quasi action of X on A}, \cref{eqn:quasi action of X on TP}).  And, $\tilde{\tau}, \tilde{\lambda}$ be the likewise for the Lie groupoid connection $\tilde{\sigma}.$ Albeit in a different context, in \cite{cran}, the authors address a similar question relating representations up to homotopy (summarized in \cref{subsection:Connection of a Lie groupoid})  for different choices of the Lie groupoid connections.

  Consider a connection pair $(\omega, \sigma)$ on the Lie groupoid $\mbbX$-bundle $\pi\colon P\to M$.  Let $TP=\mathcal{H}\oplus \ker d\pi$ be the corresponding splitting of the tangent bundle $TP$ (\cref{thm:chartacterization of connection form in terms of subbundle}). Given a vector $u\in T_pP,$ split $u$ as $u=u_1+u_2,$  for $u_1 \in \mathcal{H}_p$  and $u_2 \in (\ker d\pi)_p.$  The connection form $\omega$ and $\mathcal{H}$ are related by  $\omega_p(u)=(d\delta_p)_{1_{a(p)}}^{-1}(u_2)$, and $\omega$ is $\mathbb{X}$-equivariant;  $\omega_{pg}\bigl({\tau}_g(u)\bigr)={\lambda}_{g^{-1}}\omega_p(u)$.  For another choice of the connection $\tilde{\sigma}$ on the Lie groupoid, let us compute 
\begin{equation}\label{eqn:Difference tau tilde tau}
    \begin{split}
    \tau_g(u)-\tilde{\tau}_g(u)={} &  \bigl(0.(di)_{g^{-1}}\underbrace{(\sigma_{g^{-1}}-\tilde{\sigma}_{g^{-1}})(da)_p(u)}\bigr).
    \end{split}
\end{equation}
We note that the underbraced term in the  equation above belongs to $({\rm{ker}} \, ds)_g,$ and  thus, there exists a unique $\alpha \in A_{t(g)}$ such that 
\begin{equation}\label{eqn:dR(alpha)=sigma-tilde(sigma)}
\begin{split}
(dR_{g^{-1}})_{1_{t(g^{-1})}}(\alpha)={} & (\sigma_{g^{-1}}-\tilde{\sigma}_{g^{-1}})(da)_p(u).
\end{split}
\end{equation}
Then \cref{eqn:Difference tau tilde tau} reduces as
\begin{equation}
    \begin{split}
      \tau_g(u)-\tilde{\tau}_g(u)={} & \bigl(0.\underbrace{(di)_{g^{-1}}(dR_{g^{-1}})_{1_{t(g^{-1})}}}(\alpha)\bigr) \\
      &[{\rm The \, underbraced \, term \,}=(dL_g)_{1_{s(g)}}(di)_{1_{s(g)}} \, {\rm by \, \cref{diagram:i comp R=L comp i}}]\\
      ={} & \bigl(0.\underbrace{(dL_{g^{-1}})_{1_{s(g)}}(di)_{1_{s(g)}}(\alpha)}_{=0.(di)_{1_{s(g)} \, \rm{ by \, \cref{eqn:description of left translations at tangent level}}}}\bigr)\\
      ={} & \Bigl(\underbrace{0.\bigl(0.(di)_{1_{s(g)}}(\alpha)\bigr)}\Bigr) \\
      & [{\rm The \, underbraced \, term \,}=\bigl((0.0).(di)_{1_{s(g)}}(\alpha)\bigr) \, {\rm by \, \cref{diagram:multiplicative action of torsor at tangent space level}}] \\
      ={} & \bigl(\underbrace{(0.0)}_{=0}.(di)_{1_{s(g)}}(\alpha)\bigr)\\
       ={} &\bigl(\underbrace{0.(di)_{1_{s(g)}}(\alpha)}\bigr)\\
      & [{\rm The \, underbraced \, term \,}=(d\delta_{pg})_{1_{a(pg)}}(\alpha) \, {\rm by \, \cref{eqn:description of fundamental vector field map}}] \\
        ={} &  (d\delta_{pg})_{1_{s(g)}}(\underbrace{\alpha}) \\
      &  [{\rm The \, underbraced \, term \,}=(dR_{g^{-1}})_{1_{t(g^{-1})}}^{-1}(\sigma_{g^{-1}}-\tilde{\sigma}_{g^{-1}})(da)_p(u) \, {\rm by \, \cref{eqn:dR(alpha)=sigma-tilde(sigma)}}] \\
         ={} & (d\delta_{pg})_{1_{s(g)}}\bigl(\underbrace{(dR_{g^{-1}})_{1_{t(g^{-1})}}^{-1}}(\sigma_{g^{-1}}-\tilde{\sigma}_{g^{-1}})(da)_p(u)\bigr) \\
      & [{\rm The \, underbraced \, term \,}=(dR_g)_{g^{-1}} \, {\rm by \, \cref{eqn:Inverse of dR_g}}]\\
         ={} & (d\delta_{pg})_{1_{a(pg)}}\bigl(\underbrace{(dR_g)_{g^{-1}}(\sigma_{g^{-1}}-\tilde{\sigma}_{g^{-1}})}(da)_p(u)\bigr).
    \end{split}
\end{equation}
\begin{rem}
We point out here  that the map in the underbraced term above provides 
an isomorphsim between representations upto homotopy on the graded bundle $A \oplus TX_0$ induced by Ehresmann connections $\sigma, \tilde{\sigma}$ in Proposition 3.16 of \cite{cran}.
\end{rem}
Finally, we arrive at
\begin{equation}\label{eqn:inverse image of difference of tau g and tilde tau g on u}
    \begin{split}
   \tau_g(u)-\tilde{\tau}_g(u)={} & (d\delta_{pg})_{1_{a(pg)}}\bigl(\underbrace{(dR_g)_{g^{-1}}(\sigma_{g^{-1}}-\tilde{\sigma}_{g^-1})}_{=\alpha \, {\rm by \, \cref{eqn:dR(alpha)=sigma-tilde(sigma)}}}(da)_p(u)\bigr)=(d\delta_{pg})_{1_{a(pg)}}(\alpha).  
    \end{split}
    \end{equation}
In conclusion 
\begin{equation}\label{eqn:connection form difference of tau g and tilde tau g on u}
    \begin{split}
        \omega_{pg}\bigl(\tau_g(u)-\tilde{\tau}_g(u)\bigr)={} & \omega_{pg} \circ (d\delta_{pg})_{1_{a(pg)}}(\alpha)={}  \alpha.
    \end{split}
\end{equation}
The fundamental vector field map is equivariant regardless of the choice of a connection.  
\begin{equation} \label{eqn:equivariance of fundamental vector field irrespective of connections}
\begin{split}
(d\delta_{pg})_{1_{a(pg)}}\bigl(\lambda_{g^{-1}}\omega_p(u)\bigr) ={} & \tau_g\bigl((d\delta_p)_{1_{a(p)}} \circ \omega_p(u)\bigr) =   \tau_g(u_2).\\
(d\delta_{pg})_{1_{a(pg)}}\bigl(\tilde{\lambda}_{g^{-1}}\omega_p(u)\bigr) = {} & \tilde{\tau}_g \bigl((d\delta_p)_{1_{a(p)}} \circ \omega_p(u)\bigr)=\tilde{\tau}_g(u_2).
    \end{split}
\end{equation}
From \cref{eqn:equivariance of fundamental vector field irrespective of connections},  \cref{eqn:inverse image of difference of tau g and tilde tau g on u}, and with a bit of calculation, one finds

\begin{equation}\label{eqn: connection transormation different connections}
\begin{split}
    \omega_{pg}\bigl(\tilde{\tau}_g(u)\bigr) 
    ={} & \tilde{\lambda}_{g^{-1}}\omega_p(u)+(d\delta_{pg})_{1_{a(pg)}}^{-1}\bigl(\tilde{\tau}_g(u_1)-\tau_g(u_1)\bigr).
\end{split}
\end{equation}
Thus, the connection form on the Lie groupoid bundle for one choice of Ehresmann connection on the Lie groupoid will not satisfy the required condition of equivariancy for another choice of Ehresmann connection; that is, if $(\omega, {\sigma})$
is a connection pair on $\mbbX$-bundle, then $(\omega, \tilde{\sigma})$ may not be the same! However, we will observe that if $(\tilde \omega, \tilde{\sigma})$ is a connection pair, then there is an interesting relation between the two. 
\begin{prop}\label{prop:inducing a connection form from a theta}
    Let $(\omega, \sigma)$ be a connection pair on the $\mathbb{X}$-bundle $\pi\colon P\to M$ and $\mathcal H\subset TP$ the subbundle associated to $\omega.$ Choose another Ehressman connection $\tilde{\sigma}$  on the Lie groupoid $\mathbb{X}$.  If $\theta$ is a smooth Lie algebroid $(1^*\ker ds\to X_0)$-valued differential $1$-form satisfying 
\begin{equation}\label{eqn:Conditions on theta}
        \begin{split}
&\theta \circ d\delta=0,  \\      
            &\theta_{pg}\bigl(\tilde{\tau}_g(u)\bigr)={} \tilde{\lambda}_{g^{-1}}\theta_p(u)+(d\delta_{pg})_{1_{a(pg)}}^{-1}\bigl(\tilde{\tau}_g(u_1)-\tau_g(u_1)\bigr),
             \end{split}
    \end{equation}   
  for $u=u_1+u_2\in T_pP$ with $u_1\in \mathcal{H}_p, u_2\in (\ker 
 d\pi)_p,$ then $\bigl(\omega-\theta, \tilde{\sigma}\bigr)$ is  a connection pair  on $\mathbb{X}$-bundle $\pi\colon P\to M.$   Conversely if $(\omega, \sigma)$ and $(\tilde \omega, \tilde\sigma)$ are two connection pairs on   $\mathbb{X}$-bundle $\pi\colon P\to M,$ then $\omega-\tilde{\omega}$ satisfies transformation conditions in \cref{eqn:Conditions on theta}.
 
\end{prop}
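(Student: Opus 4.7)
The strategy is to exploit the explicit transformation law \cref{eqn: connection transormation different connections} that was already derived just before the statement of the proposition. That formula expresses how a connection pair $(\omega, \sigma)$ fails to satisfy the $\tilde{\sigma}$--equivariance, with the failure measured precisely by the term $(d\delta_{pg})_{1_{a(pg)}}^{-1}\bigl(\tilde{\tau}_g(u_1)-\tau_g(u_1)\bigr)$. The defining conditions on $\theta$ in \cref{eqn:Conditions on theta} are engineered so that the failure of $\omega$ and the failure of $\theta$ cancel when subtracted. Thus the proof reduces to two short verifications for each direction.

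For the forward direction, I would first verify the normalization condition for $\omega-\theta$. Given $\alpha \in A_{a(p)}$, compute
\[
(\omega-\theta)_p \circ (d\delta_p)_{1_{a(p)}}(\alpha) \;=\; \omega_p\circ(d\delta_p)_{1_{a(p)}}(\alpha) - \theta_p\circ(d\delta_p)_{1_{a(p)}}(\alpha) \;=\; \alpha - 0 \;=\; \alpha,
\]
using \cref{eqn:connection form is a section of fundamental vector field map} for $\omega$ and the first condition in \cref{eqn:Conditions on theta} for $\theta$. Next, for the $\tilde{\sigma}$--equivariance, write any $u\in T_pP$ as $u=u_1+u_2$ with $u_1\in\mathcal{H}_p$ and $u_2\in(\ker d\pi)_p$, and subtract the second equation of \cref{eqn:Conditions on theta} from the identity \cref{eqn: connection transormation different connections}; the correction terms $(d\delta_{pg})_{1_{a(pg)}}^{-1}\bigl(\tilde{\tau}_g(u_1)-\tau_g(u_1)\bigr)$ cancel, leaving
\[
(\omega-\theta)_{pg}\bigl(\tilde{\tau}_g(u)\bigr) \;=\; \tilde{\lambda}_{g^{-1}}(\omega-\theta)_p(u).
\]
By \cref{defn:connection as 1-form}, this shows $(\omega-\theta, \tilde{\sigma})$ is a connection pair.

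For the converse, given connection pairs $(\omega,\sigma)$ and $(\tilde\omega,\tilde\sigma)$, set $\theta := \omega-\tilde\omega$. Since both $\omega$ and $\tilde\omega$ satisfy \cref{eqn:connection form is a section of fundamental vector field map}, we immediately get $\theta\circ d\delta = \mathrm{Id}-\mathrm{Id} = 0$. For the transformation law, apply \cref{eqn: connection transormation different connections} to $\omega$ and use the genuine $\tilde{\sigma}$--equivariance $\tilde\omega_{pg}(\tilde{\tau}_g(u)) = \tilde{\lambda}_{g^{-1}}\tilde\omega_p(u)$ for $\tilde\omega$; subtracting yields exactly the second equation in \cref{eqn:Conditions on theta} with $\theta=\omega-\tilde\omega$.

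The only genuine obstacle I anticipate is ensuring the decomposition $u=u_1+u_2$ relative to $\mathcal{H}=\ker\omega$ plays nicely with the quasi--action in both formulas: one has to confirm that the correction term $(d\delta_{pg})_{1_{a(pg)}}^{-1}\bigl(\tilde{\tau}_g(u_1)-\tau_g(u_1)\bigr)$ depends only on $u_1$ (the $\mathcal{H}$--component), because for $u_2\in\ker d\pi$ the vectors $\tau_g(u_2)$ and $\tilde\tau_g(u_2)$ both lie in $\ker d\pi$ (by \cref{Rem:error of quasi action on TP lies insie vertical} together with the fact that $\ker d\pi$ is preserved by every quasi--action since it is the image of $d\delta$). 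Once this coherence is established --- which is the only point where the derivation of \cref{eqn: connection transormation different connections} implicitly relies on choosing a connection pair rather than an arbitrary splitting --- the rest is formal subtraction.
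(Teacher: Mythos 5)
Your proposal is correct and follows essentially the same route as the paper, whose proof simply cites \cref{eqn: connection transormation different connections} and \cref{eqn:equivariance of fundamental vector field irrespective of connections} and performs the formal subtraction you carry out explicitly. The coherence point you flag at the end is already built into the derivation of \cref{eqn: connection transormation different connections}: the $u_2$-contribution is absorbed into $\tilde{\lambda}_{g^{-1}}\omega_p(u)$ precisely because the fundamental vector field map is equivariant for either choice of Lie groupoid connection, so no additional verification is needed.
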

\begin{proof}
   A direct consequence of \cref{eqn: connection transormation different connections} and \cref{eqn:equivariance of fundamental vector field irrespective of connections}.     
\end{proof}

In fact  \cref{prop:inducing a connection form from a theta} allows us to define a groupoid $\mathbb{CONNECTION}\bigl(\mathbb{X}, P\to M\bigr)$ of connections for a given $\mbbX$-bundle $\pi\colon P\to M.$ The objects are connection pairs $\bigl(\omega,\sigma\bigr)$ and
an arrow is given by $\bigl(\omega,\sigma,\theta,\tilde \sigma\bigr)$
with source and target $s\bigl(\omega,\sigma,\theta,\sigma'\bigr)=(\omega, \sigma)$ and $t\bigl(\omega,\sigma,\theta,\tilde\sigma\bigr)=(\omega-\theta, \tilde \sigma)$ respectively,  where $\theta$ is as in \cref{eqn:Conditions on theta}. 
\begin{equation}\label{eqn:Category of connections}
    \begin{split}
        {\rm{Obj}}\biggl({\mathbb{CONNECTION}}\bigl(\mathbb{X}, P\to M\bigr)\biggr)={}& \Bigl\{\bigl(\omega,\sigma)\bigr)\Bigr\},\\
{\rm{Mor}}\biggl({\mathbb{CONNECTION}}\bigl(\mathbb{X}, P\to M\bigr)\biggr)={}& \Bigl\{\bigl(\omega,\sigma,\theta,\tilde \sigma\bigr)\Bigr\}. 
    \end{split}
\end{equation}
The composition is 
$\bigl(\omega',\sigma',\theta ',\sigma''\bigr)\circ \bigl(\omega,\sigma,\theta ,\sigma'\bigr):= \bigl(\omega,\sigma,\theta '+\theta ,\sigma''\bigr).
$
\begin{rem}
In this paper, we will not make any further use of $\mathbb{CONNECTION}\bigl(\mathbb{X}, P\to M\bigr).$ We only make a passing remark that this category enjoys an intricate relationship with the gauge transformations on the  $\mbbX$-bundle $\pi\colon P\to M$, which will be explored in detail in our upcoming paper.
\end{rem}

\subsection{Connections and quasi connections}\label{subsection:connections and quasi connections}
As we remarked earlier,  splittings of the Atiyah sequence in \cref{sequence diagram:Atiyah sequence in classical setup} associated with a classical principal bundle and its connections are in one-to-one correspondence. In \cref{section:Atiyah sequence}  (\cref{sequence diagram:ATIYAH sequence}) we have introduced an analogue of the Atiyah sequence for a Lie groupoid $\mbbX$-bundle $\pi\colon P\to M.$ In this section we will address the relation between the sequence in  \cref{sequence diagram:ATIYAH sequence} and the framework of connections developed in \cref{section:connection as 1-form}. To be precise, we compare \cref{defn:quasi connection} with \cref{defn:connection as 1-form}, or equivalently by \cref{thm:chartacterization of connection form in terms of subbundle}, with a sub-bundle $\mathcal H\subset TP$ satisfying \eqref{eqn:subbundle splitting ker dpi} and \eqref{eqn:stability of connection under quasi action}.

A connection pair $(\omega, \mathbb{H})$ on a $\mbbX$-bundle $\pi\colon P\to M$ as in \cref{defn:connection as 1-form} induces a quasi connection $F=(F_0, F_1):\AtP \rightarrow \AdP$(\cref{defn:quasi connection}), $F_0\Bigl([p, u, v]\Bigr)=[p, \omega_p(u), v]$ and $F_1\Bigl([p, u, v], g\Bigr)=\Bigl([p, \omega_p(u), v], g\Bigr).$
It is not difficult to verify the well-definedness, functoriality, and smoothness of the maps defined above.  Induced map on the connected components of a fibre $m \in M$, 
\begin{equation}\label{eqn:quasi connection for connection on connected components}
    \begin{split}
      \Bar{F}:{\mathfrak{Conn}}\bigl(\Bar{\pi}_{\AtP}^{-1}(m)\bigr) & \rightarrow {\mathfrak{Conn}}\bigl(\Bar{\pi}_{\AdP}^{-1}(m)\bigr) \\
      \langle[p, u, v]\rangle & \rightarrow \langle[p, \omega_p(u), v]\rangle 
    \end{split} 
\end{equation}
conforms to  the linearity $\Bar{F}\bigl(\langle[p, u, v]\rangle+\langle[pg, u', v']\rangle\bigr)={}\Bar{F}\bigl(\langle[p,u, v]\rangle\bigr)+\Bar{F}\bigl(\langle[pg, u', v']\rangle\bigr).$
The converse generally does not hold due to the basic curvature $K_{\sigma}^{bas}$ of the Ehresmann connection. Since  $K_{\sigma}^{bas}$ vanishes for a Cartan connection, we have a one-to-one correspondence between quasi-connections and connections if the Lie groupoid connection is Cartan.  

\begin{prop}\label{prop:Connection and quasi connection in case of Cartan connection}
    Let $\pi:P \rightarrow M$ be a principal $\mbbX$-bundle with  a Cartan connection  $\sigma$ on $\mathbb{X}$.  Then, there is a one-to-one correspondence between quasi-connections and connection forms on $P.$ 
\end{prop}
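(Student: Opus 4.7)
The plan is to exploit the fact that under a Cartan Ehresmann connection $\sigma$ on $\mathbb{X}$, the basic curvature $K_\sigma^{bas}$ vanishes identically, so by \cref{eqn:error of quasi action on fibre product of P and A} and \cref{eqn:simplified version of error of quasi action on TP} the quasi actions $\tau$ on $P\times_{X_0}A\oplus a^*TX_0$ and on $TP\oplus a^*TX_0$ are honest actions, the homotopy map $\Upsilon$ is zero, the equivalence relations defined in \cref{eqn: equivalance relation on P times A plus TX_0} and \cref{subsubsection:adjonit bundle AtP over M} are trivial, and consequently $\AdP$ and $\AtP$ are the action Lie groupoids of $\mathbb{X}$ on the honest vector bundles $P\times_{X_0}A\oplus a^*TX_0$ and $TP\oplus a^*TX_0$ (cf. \cref{subsection:Atiyah sequence for a cartan connection}). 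In particular the object-level ``connected components'' can be identified with the orbit spaces, and a functor over $M$ is determined by its object map.

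For the forward direction ($\omega\mapsto F$), given a connection pair $(\omega,\sigma)$ I would define $F_0\colon TP\oplus a^*TX_0\to P\times_{X_0}A\oplus a^*TX_0$ by $F_0(p,u,v)=(p,\omega_p(u),v)$ and $F_1\bigl((p,u,v),g\bigr)=\bigl((p,\omega_p(u),v),g\bigr)$. This is evidently a smooth bundle map that is linear on each fibre, so conditions (2) and (3) of \cref{defn:quasi connection} hold trivially. The relation $F\circ\bar{d\delta}=\mathrm{Id}_{\AdP}$ on objects follows directly from \cref{eqn:connection form is a section of fundamental vector field map}, while well-definedness as a functor (i.e.\ compatibility of $F_1$ with source and target) reduces to the equivariance condition \cref{eqn:X-equivariance of connection form}, since in the Cartan case the target of $([p,u,v],g)$ in $\AtP$ is $[pg,\tau_g(u),\tau_g^1(v)]$ and we need this to map under $F_0$ to $[pg,\lambda_{g^{-1}}(\omega_p(u)),\lambda_{g^{-1}}^1(v)]$.

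For the converse ($F\mapsto\omega$), given a quasi connection $F=(F_0,F_1)$, I would define $\omega_p\colon T_pP\to A_{a(p)}$ as the composition of $F_0$ restricted to $(TP,0)\subset TP\oplus a^*TX_0$ with the projection onto the $A$-factor, so $\omega_p(u):=\mathrm{pr}_A\bigl(F_0(p,u,0)\bigr)$. Smoothness and fibrewise linearity of $\omega$ follow from condition (2) of \cref{defn:quasi connection}. The condition $\omega\circ d\delta=\mathrm{Id}_A$ of \cref{eqn:connection form is a section of fundamental vector field map} is immediate from $F\circ\bar{d\delta}=\mathrm{Id}_{\AdP}$ applied to objects of the form $[p,\alpha,0]$. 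The equivariance \cref{eqn:X-equivariance of connection form} would be extracted from the functoriality of $F_1$: the arrow $\bigl((p,u,0),g\bigr)$ in $\AtP$ has target $[pg,\tau_g(u),0]$, and $F_1$ must send it to the unique arrow $\bigl((p,\omega_p(u),0),g\bigr)$ in $\AdP$, whose target is $[pg,\lambda_{g^{-1}}(\omega_p(u)),0]$; matching this with $F_0[pg,\tau_g(u),0]$ yields the desired equation. Finally, the two assignments are visibly mutual inverses on the $A$-component, and the freedom in the $TX_0$-component of $F_0$ is eliminated because $F\circ\bar{d\delta}=\mathrm{Id}$ forces it to be the identity map on $a^*TX_0$.

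The main obstacle I anticipate is bookkeeping rather than conceptual: one must carefully verify that a quasi connection $F$ really has no ``cross terms'' mixing $TP$ and $a^*TX_0$ that could spoil the reconstruction of $\omega$. This uses that $F$ is a graded diffeological vector bundle map compatible with $\bar{d\delta}$ and $\bar{d\pi}$, which pins down the $a^*TX_0$-component of $F_0$ to be the identity on that summand and any putative cross term $TP\to a^*TX_0$ to vanish by consistency with $F\circ\bar{d\delta}=\mathrm{Id}$. Once this is in hand, the Cartan hypothesis ensures that the absence of the homotopy term $\Upsilon$ makes the correspondence $\omega\leftrightarrow F$ transparent, and the converse failure in the non-Cartan case is precisely explained by \cref{rem:preservation of error by connection form}.
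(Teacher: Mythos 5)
Your proposal is correct and follows essentially the same route as the paper: the forward direction is the componentwise assignment $F_0(p,u,v)=(p,\omega_p(u),v)$, and the converse extracts $\omega$ from the object-level map of $F$, using that the Cartan hypothesis makes the object spaces of $\AtP$ and $\AdP$ honest vector bundles, with $F\circ\bar{d}\delta=\mathrm{Id}$ yielding \cref{eqn:connection form is a section of fundamental vector field map} and functoriality of $F_1$ yielding \cref{eqn:X-equivariance of connection form}. The one small refinement: the absence of cross terms in $F_0$ is most cleanly forced by the gradedness demanded in condition (2) of \cref{defn:quasi connection} (the relation $F\circ\bar{d}\delta=\mathrm{Id}$ by itself only constrains $F_0$ on $\ker d\pi\oplus a^*TX_0$), which is what the paper implicitly relies on when writing the object map as $(p,u,v)\mapsto(p,(F^0)_p(u),v)$.
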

\begin{proof}
    One direction has just been shown.  For the other, suppose $F \colon \AtP \rightarrow \AdP$ is a quasi connection.  Recall that for a Cartan connection, ${\rm{Obj}}\Bigl(\AtP\Bigr)=TP \oplus a^*TX_0$ and ${\rm{Obj}}\Bigl(\AdP\Bigr)=P \times_{X_0} A \oplus a^*TX_0.$  The object-level map, $(p, u, v) \rightarrow (p, (F^0)_p(u),v)$, defines a vector bundle map $TP \rightarrow P \times_{X_0} A$ given by $(p,u) \mapsto \bigl(p,(F^0)_p(u)\bigr).$  Hence, it is a $A$-valued $1$-form on $P.$ Moreover, as   $F$ is a left splitting of \cref{sequence diagram:ATIYAH sequence} for every $p \in P$ it satisfies  $(F^0)_p \circ d\delta_p(\alpha)=\alpha,$ for every $\alpha \in A_{a(p)}$.  The functoriality of $F$ implies $(F^0)_{pg}\bigl(\tau_g^0(u)\bigr)=\lambda_{g^{-1}}^0(F^0)_p(u).$  
 \end{proof}
\begin{funding}
 Naga Arjun S J acknowledges the research grant(09/0997(12538)/2021-EMR-I) from CSIR-UGC, Government of India.
\end{funding}


\begin{appendices}
\section{Calculations for \cref{prop: TP plus a^*TX_0 is a equivariant bundle upto homotopy} }\label{subsection: Calculations for atiyah bundle proposition}
Suppose $(p,g) \in P \times_{X_0} X_1$, then 
    \begin{equation} \label{eqn: computation of da comp Upsilon}
        \begin{split}
            (da)_{pg}\bigl(\tau_g^0(u))={} & \underbrace{(da)_{pg}\bigl(u.(di)_{g^{-1}}\sigma_{g^{-1}}(da)_p(u))}_{(ds)_g(di)_{g^{-1}}\sigma_{g^{-1}}(da)_p(u) \, {\rm by} \, \cref{eqn:reference example for relation of action with anchor}} \\
            ={} &d(\underbrace{s \circ i}_{t})_{g^{-1}}\sigma_{g^{-1}}(da)_p(u) \\
            ={} & \underbrace{(dt)_{g^{-1}}\sigma_{g^{-1}}(da)_p(u)}_{\lambda_{g^{-1}}^1(da)_p(u) \, {\rm  by \,} \cref{quasi action of X on TX_0}} \\
            ={} & \underbrace{\lambda_{g^{-1}}(da)_p(u)} \\
            & [{\rm The \, underbraced \, term }={} \tau_{g}^1(da)_p(u) {\rm by \, \cref{eqn: quasi action of X on a^*TX_0}}] \\
            ={} & \tau_{g}^1(da)_p(u).
        \end{split}
    \end{equation}
    Now suppose $(p,g,h)$ is a triple with $a(p)=t(g)$ and $s(g)=t(h),$ then from \cref{eqn:simplified version of error of quasi action on TP} we have that
    \begin{equation}\label{eqn: computation of deviation of tau^1 for atiyah bundle}
        \begin{split}
            \tau_{gh}^0(u)-\tau_h^0\tau_g^0(u)={} & \underbrace{0.l_{gh}K_{\sigma}^{bas}(h^{-1},g^{-1})(da)_p(u)}_{=\Upsilon(g,h)(da)_p(u) \, {by \,} \cref{eqn: Upsilon map for atiyah bundle}}\\
            ={} & \Upsilon(g,h)(da)_p(u).
        \end{split}
    \end{equation}
    For other quasi action, we notice that
    \begin{equation}\label{eqn: computation of da comp Upsilon for atiyah bundle}
        \begin{split}
            (da)_{pg}\Upsilon(g,h)(v)={} & \underbrace{(da)_{pg}\bigl(0.l_{gh}K_{\sigma}^{bas}(h^{-1},g^{-1})(v)\bigr)} \\
            &[{\rm The \, underbraced \, term \,}=(ds)_{gh}l_{gh}K_{\sigma}^{bas}(h^{-1},g^{-1})(v)\\
            & {\rm by \, applying \, \cref{eqn:reference example for relation of action with anchor} \, for \, the \, tuple\,} (p,gh)] \\
            ={} & (ds)_{gh}\underbrace{l_{gh}}K_{\sigma}^{bas}(h^{-1},g^{-1})(v) \\
            & [{\rm The \, underbraced \, term \,}=(dL_{gh})_{1_{s(gh)}}(di)_{1_{s(gh)}} \, {\rm by \, \cref{eqn:description of left translations at tangent level}}] \\
            ={} & \underbrace{(ds)_{gh} \circ (dL_{gh})_{1_{s(gh)}} \circ (di)_{1_{s(gh)}}}_{=dt \, {\rm by 
 \, taking \, differentials \, of \, s \circ L_g \circ i=t}}K_{\sigma}^{bas}(h^{-1},g^{-1})(v) \\
 ={} & \underbrace{(dt)_{1_{t(h^{-1})}}K_{\sigma}^{bas}(h^{-1},g^{-1})(v)} \\
 & [{\rm The \, underbraced \, term \,}=\tau_{gh}^1(v)-\tau_h^1\tau_g^1(v) \, {\rm by \, \cref{eqn: error of quasi action on TX_0=dt comp basic curvature}}] \\
 ={} & \tau_{gh}^1(v)-\tau_h^1\tau_g^1(v).
        \end{split}
    \end{equation}
In order to verify $        \tau_k^0\Upsilon(g,h)-\Upsilon(gh,k)+\Upsilon(g,hk)-\Upsilon(h,k)\tau_g^1={} 0
$, we compute each of the terms separately.
    \begin{equation}\nonumber
        \begin{split}
            \tau_k^1\Upsilon(g,h)(v)={} & \tau_k^1\bigl(0.l_{gh}K_{\sigma}^{bas}(h^{-1},g^{-1})(v)\bigr)\\
            ={} & \underbrace{\Bigl(0.l_{gh}K_{\sigma}^{bas}(h^{-1},g^{-1})(v)\Bigr).\Bigl((di)_{k^{-1}}\sigma_{k^{-1}}(da)_{pg}\bigl(0.l_{gh}K_{\sigma}^{bas}(h^{-1},g^{-1})(v)\bigr)\Bigr)}\\
             & [{\rm The \, underbraced \, term \,}={0.\Bigl(l_{gh}}K_{\sigma}^{bas}(h^{-1},g^{-1})(v).(di)_{k^{-1}}\sigma_{k^{-1}}(da)_{pg}(0.l_{gh}K_{\sigma}^{bas}(h^{-1},g^{-1})(v)\Bigr) \\
            &  {\rm by \, \cref{diagram:multiplicative action of torsor at tangent space level}\, 
             for \, the \, triple \,} (p,gh,k)] \\
            ={} & 0.\Bigl(l_{gh}K_{\sigma}^{bas}(h^{-1},g^{-1})(v).(di)_{k^{-1}}\sigma_{k^{-1}}\underbrace{(da)_{pg}\bigl(0.l_{gh}K_{\sigma}^{bas}(h^{-1},g^{-1})(v)}\bigr)\Bigr) \\
            & [{\rm The \, underbraced \, term \,}=(dt)_{1_{s(h)}}K_{\sigma}^{bas}(h^{-1},g^{-1})(v) \, {\rm by \, \cref{eqn: computation of da comp Upsilon for atiyah bundle}}] \\
            ={} & 0.\Bigl(\underbrace{l_{gh}}K_{\sigma}^{bas}(h^{-1},g^{-1})(v).(di)_{k^{-1}}\sigma_{k^{-1}}(dt)_{1_{s(h)}}K_{\sigma}^{bas}(h^{-1},g^{-1})(v)\Bigr) \\
            &[{\rm The \, underbraced \, term \,}=(dL_{gh})_{1_{s(gh)}}(di)_{1_{s(gh)}} \, {\rm by \, \cref{eqn:description of left translations at tangent level} \, for \, }gh]\\ 
            ={} &0.\Bigl(\underbrace{(dL_{gh})_{1_{s(gh)}}(di)_{1_{s(gh)}}}K_{\sigma}^{bas}(h^{-1},g^{-1})(v).(di)_{k^{-1}}\sigma_{k^{-1}}(dt)_{1_{s(h)}}K_{\sigma}^{bas}(h^{-1},g^{-1})(v) \Bigr) \\
            & [{\rm The \, underbraced \, term \,}=(di)_{h^{-1}g^{-1}}(dR_{h^{-1}g^{-1}})_{1_{t(h^{-1}})} \, {\rm by \, differentiating \, \cref{diagram:i comp R=L comp i} \, at \,} 1_{s(h)}] \\
            ={} & 0.\underbrace{\Bigl((di)_{h^{-1}g^{-1}}(dR_{h^{-1}g^{-1}})_{1_{s(h)}}K_{\sigma}^{bas}(h^{-1},g^{-1})(v).(di)_{k^{-1}}\sigma_{k^{-1}}(dt)_{1_{s(h)}}K_{\sigma}^{bas}(h^{-1},g^{-1})(v) \Bigr)} \\
            &[{\rm The \, underbraced \, term \,}=0.\Bigl((di)_{k^{-1}h^{-1}g^{-1}}\bigl(\sigma_{k^{-1}}(dt)_{1_{s(h)}}K_{\sigma}^{bas}(h^{-1},g^{-1})(v).(dR_{h^{-1}g^{-1}})_{1_{s(h)}} \\
             & K_{\sigma}^{bas}(h^{-1},g^{-1})(v)\bigr)\Bigr){\rm by \, \cref{diagram:relation of dm and di at the tangent level} \, for \, the \, tuple \, } (gh,k) ] \\
             ={} & 0.\Bigl((di)_{k^{-1}h^{-1}g^{-1}}\bigl(\underbrace{\sigma_{k^{-1}}(dt)_{1_{s(h)}}K_{\sigma}^{bas}(h^{-1},g^{-1})(v)}.(dR_{h^{-1}g^{-1}})_{1_{s(h)}}K_{\sigma}^{bas}(h^{-1},g^{-1})(v)\bigr) \Bigr) \\
             & [{\rm The \, underbraced \, term \,}=l_{k^{-1}}K_{\sigma}^{bas}(h^{-1},g^{-1})(v)-r_{k^{-1}}\psi_{k^{-1}}l_{k^{-1}}K_{\sigma}^{bas}(h^{-1},g^{-1})(v) \\
            & {\rm by \, applying \,} l_{k^{-1}}K_{\sigma}^{bas}(h^{-1},g^{-1})(v) \,  {\rm in \, third \, equation \, of \, \cref{eqn:relation between three definition of connections on lie groupoid} 
   }]
            \end{split}
            \end{equation}
            \begin{equation}\label{eqn: first term for homotopy map for atiyah case}
            \begin{split}
             & \hspace{1.6cm}={}0.\Bigl((di)_{k^{-1}h^{-1}g^{-1}}\bigl(l_{k^{-1}}K_{\sigma}^{bas}(h^{-1},g^{-1})(v)-r_{k^{-1}}\psi_{k^{-1}}l_{k^{-1}}K_{\sigma}^{bas}(h^{-1},g^{-1})(v).\underbrace{(dR_{h^{-1}g^{-1}})_{1_{s(h)}}}K_{\sigma}^{bas}(h^{-1},g^{-1})(v) \bigr) \Bigr) \\
            &\hspace{2cm} [{\rm The \, underbraced \, term \,}=r_{h^{-1}g^{-1}} \, {\rm by \, \cref{equation:small r map}}] \\
            & \hspace{1.6cm}={}  0.\Bigl((di)_{k^{-1}h^{-1}g^{-1}}\bigl(l_{k^{-1}}K_{\sigma}^{bas}(h^{-1},g^{-1})(v)-r_{k^{-1}}\underbrace{\psi_{k^{-1}}l_{k^{-1}}}K_{\sigma}^{bas}(h^{-1},g^{-1})(v).r_{h^{-1}g^{-1}}K_{\sigma}^{bas}(h^{-1},g^{-1})(v) \bigr) \Bigr) \\
            &\hspace{2cm}[{\rm The \, underbraced \, term \,}=-\lambda_{k^{-1}}^0 \, {\rm by \, \cref{quasi action of X on A}}] \\
            & \hspace{1.6cm}={}  0.\Bigl((di)_{k^{-1}h^{-1}g^{-1}}\bigl(l_{k^{-1}}K_{\sigma}^{bas}(h^{-1},g^{-1})(v)+r_{k^{-1}}\lambda_{g^{-1}}^0(K_{\sigma}^{bas}(h^{-1},g^{-1})(v)).r_{h^{-1}g^{-1}}K_{\sigma}^{bas}(h^{-1},g^{-1})(v)\bigr) \Bigr) \\
            & \hspace{1.6cm}={}  0. \biggl( (di)_{k^{-1}h^{-1}g^{-1}} \Bigl(\underbrace{l_{k^{-1}}K_{\sigma}^{bas}(h^{-1},g^{-1})(v).r_{h^{-1}g^{-1}}K_{\sigma}^{bas}(h^{-1},g^{-1})(v)}\Bigr)+\Bigl(r_{k^{-1}}\lambda_{k^{-1}}^0(K_{\sigma}^{bas}(h^{-1},g^{-1})(v)).0 \Bigr) \biggr) \\
            &\hspace{2cm}[{\rm The \, underbraced \, term \,}=0 \, {\rm by \, eq. 9} \, in \, \cite{cran} ] \\
            & \hspace{1.6cm}={}  0. \Bigl((di)_{k^{-1}h^{-1}g^{-1}}\bigl(\underbrace{r_{k^{-1}}\lambda_{k^{-1}}^0K_{\sigma}^{bas}(h^{-1},g^{-1})(v).0} \bigr) \Bigr) \\
            &\hspace{2cm}[{\rm The \, underbraced \, term}=(dR_{h^{-1}g^{-1}})_{k^{-1}} \, {\rm by \, differential \, of \, } R_{k^{-1}} \, { \rm map } ] \\
            & \hspace{1.6cm}={}  0. \Bigl((di)_{k^{-1}h^{-1}g^{-1}}\bigl((dR_{h^{-1}g^{-1}})_{k^{-1}}\bigl(\underbrace{r_{k^{-1}}}\lambda_{k^{-1}}^0K_{\sigma}^{bas}(h^{-1},g^{-1})(v) \bigr) \bigr) \Bigr) \\
            & \hspace{2cm}[{\rm The \, underbraced \, term \,}=(dR_{k^{-1}})_{1_{t(k^{-1})}} \, {\rm by \, \cref{equation:small r map}}] \\
            & \hspace{1.6cm}={}  0. \Bigl((di)_{k^{-1}h^{-1}g^{-1}}\bigl(\underbrace{(dR_{h^{-1}g^{-1}})_{k^{-1}}(dR_{k^{-1}})_{1_{t(k^{-1})}}}\lambda_{k^{-1}}^0K_{\sigma}^{bas}(h^{-1},g^{-1})(v) \bigr) \Bigr) \\
            &\hspace{2cm} [{\rm The \, underbraced \, term \,}=(dR_{k^{-1}h^{-1}g^{-1}})_{1_{t(k^{-1})}}] \\
            & \hspace{1.6cm}={}  0. \Bigl(\underbrace{(di)_{k^{-1}h^{-1}g^{-1}}(dR_{k^{-1}h^{-1}g^{-1}})_{1_{t(k^{-1})}}}\lambda_{k^{-1}}^0K_{\sigma}^{bas}(h^{-1},g^{-1})(v) \Bigr) \\
            &\hspace{2cm} [{\rm The \, underbraced \, term \,}=(dL_{ghk})_{1_{s(ghk)}}(di)_{1_{s(ghk)}} \, {\rm by \, differentiating \, \cref{diagram:i comp R=L comp i} \, at \,} 1_{s(ghk)}] \\
            & \hspace{1.6cm}={}  0. \Bigl(\underbrace{(dL_{ghk})(di)_{1_{s(ghk)}} }_{=l_{ghk} {\rm\, by \, \cref{eqn:description of left translations at tangent level}}} \lambda_{k^{-1}}^0K_{\sigma}^{bas}(h^{-1},g^{-1})(v) \Bigr) \\
            & \hspace{1.6cm}={}  0. \Bigl(l_{ghk}\lambda_{k^{-1}}^0K_{\sigma}^{bas}(h^{-1},g^{-1})(v) \Bigr).
        \end{split}
    \end{equation} 
    Now, by definition of $\Upsilon$ from \cref{eqn: Upsilon map for atiyah bundle} we get that,
    \begin{equation}\label{eqn: third term of homotopy map relation for atiyah case}
        \begin{split}
            \Upsilon(gh,k)(v)={} & 0. \Bigl(l_{ghk}K_{\sigma}^{bas}(k^{-1},h^{-1}g^{-1}(v) \Bigr).
        \end{split}
    \end{equation}
    and 
    \begin{equation}\label{eqn: second term of homotopy map relation for atiyah case}
        \begin{split}
            \Upsilon(g,hk)={} & 0. \Bigl(l_{ghk}K_{\sigma}^{bas}(k^{-1}h^{-1},g^{-1}) \Bigr).
        \end{split}
    \end{equation}
    Also, we have that, 
    \begin{equation}\label{eqn: fourth term of homotopy map relation for atiyah case}
        \begin{split}
            \Upsilon(h,k)\tau_g^1(v)={} & 0. \Bigl(\underbrace{l_{hk}}K_{\sigma}^{bas}(k^{-1},h^{-1})\lambda_{g^{-1}}^1(v) \Bigr) \\
            &[{\rm The \, underbraced \, term \,}=(dL_{hk})_{1_{s(hk)}}(di)_{1_{s(hk)}} \, {\rm by \, \cref{eqn:description of left translations at tangent level}}] \\
            ={} & 0. \Bigl(\underbrace{(dL_{hk})_{1_{s(k)}}(di)_{1_{s(k)}}}K_{\sigma}^{bas}(k^{-1},h^{-1})\lambda_{g^{-1}}^1(v) \Bigr) \\
            &[{\rm The \, underbraced \, term \,}=(di)_{k^{-1}}(dR_{k^{-1}h^{-1}})_{1_{t(k^{-1})}} \, {\rm by \, taking \, differentials \, of \, \cref{diagram:i comp R=L comp i} \, at \,} 1_{t(k^{-1})}] \\
            ={} & \underbrace{0}_{=0.0}. \Bigl((di)_{k^{-1}h^{-1}}(dR_{k^{-1}h^{-1}})_{1_{t(k^{-1})}}K_{\sigma}^{bas} (k^{-1},h^{-1})\lambda_{g^{-1}}^1(v)\Bigr)\\
            ={} & \underbrace{\Bigl(0.0\Bigr).\Bigl(di)_{k^{-1}h^{-1}}(dR_{k^{-1}h^{-1}})_{1_{t(k^{-1})}}K_{\sigma}^{bas}(k^{-1},h^{-1})\lambda_{g^{-1}}^1(v) \Bigr)} \\
            &[{\rm The \, underbraced \, term \,}=0.\Bigl(0.(di)_{k^{-1}}(dR_{k^{-1}h^{-1}})_{1_{t(k^{-1})}}K_{\sigma}^{bas}(k^{-1},h^{-1})\lambda_{g^{-1}}^1(v) \Bigr) \\
            &{\rm by \, \cref{diagram:multiplicative action of torsor at tangent space level} \, for \, the \, triple \, } (p,g,hk)] \\
            ={} & 0.\Bigl(\underbrace{0}.(di)_{k^{-1}}(dR_{k^{-1}h^{-1}})_{1_{t(k^{-1})}}K_{\sigma}^{bas}(k^{-1},h^{-1})\lambda_{g^{-1}}^1(v) \Bigr) \\
            & [{\rm The \, underbraced \, term \,}=(di)_{g^{-1}}(0)] \\
            ={} & 0.\Bigl(\underbrace{(di)_{g^{-1}}(0).(di)_{k^{-1}h^{-1}})(dR_{k^{-1}h^{-1}})_{1_{t(k^{-1})}}K_{\sigma}^{bas}(k^{-1},h^{-1})\lambda_{g^{-1}}^1(v)} \Bigr) \\
            & [{\rm The \, underbraced \, term \,}=(di)_{k^{-1}h^{-1}g^{-1}}\Bigl((dR_{k^{-1}h^{-1}})_{1_{t(k^{-1})}}K_{\sigma}^{bas}(k^{-1},h^{-1})\lambda_{g^{-1}}^1(v).0\Bigr) \\
            &{\rm by \, \cref{diagram:relation of dm and di at the tangent level} for \, the \, tuple \,} (hk,g)] \\
            ={} & 0. \Bigl((di)_{k^{-1}h^{-1}g^{-1}}\bigl(\underbrace{(dR_{k^{-1}h^{-1}})_{1_{t(k^{-1})}}K_{\sigma}^{bas}(k^{-1},h^{-1})\lambda_{g^{-1}}^1(v).0}\bigr) \Bigr)\\
            &[{\rm The \, underbraced \, term \,}=(dR_{g^{-1}})_{k^{-1}h^{-1}} \, {\rm by \, differential \, of \,} R_{g^{-1}} \, {\rm map}] \\
            ={} & 0. \Bigl( (di)_{k^{-1}h^{-1}g^{-1}} \bigl( \underbrace{(dR_{g^{-1}})_{k^{-1}h^{-1}} (dR_{k^{-1}h^{-1}})_{1_{t(k^{-1})}}}_{=(dR_{k^{-1}h^{-1}g^{-1}})_{1_{t(k^{-1})}}}K_{\sigma}^{bas}(k^{-1},h^{-1})\lambda_{g^{-1}}^1(v) \bigr)\Bigr) \\
            ={} & 0. \Bigl(\underbrace{ (di)_{k^{-1}h^{-1}g^{-1}})(dR_{k^{-1}h^{-1}g^{-1}})_{1_{t(k^{-1}h^{-1}g^{-1})}}}K_{\sigma}^{bas}(k^{-1},h^{-1})\lambda_{g^{-1}}^1(v) \Bigr) \\
            & [{\rm The \, underbraced \, term \,}=(dL_{ghk})_{1_{s(ghk)}}(di)_{1_{s(ghk)}} \, {\rm by \, taking \, differentials \, of \, \cref{diagram:i comp R=L comp i} \, at \,} 1_{s(ghk)}] \\
            ={} & 0. \Bigl( \underbrace{(dL_{ghk})_{1_{s(ghk)}}(di)_{1_{s(ghk)}}K_{\sigma}^{bas}(k^{-1},h^{-1})}_{=l_{ghk} \, {\rm by \, \cref{eqn:description of left translations at tangent level}}}\lambda_{g^{-1}}^1(v)\Bigr) \\
            ={} & 0. \Bigl(l_{ghk}K_{\sigma}^{bas}(k^{-1},h^{-1})\lambda_{g^{-1}}^1(v) \Bigr).
        \end{split}
    \end{equation}
\end{appendices}

\bibliography{main}


\end{document}